\documentclass[11pt]{article}
\usepackage[utf8]{inputenc}
\usepackage[T1]{fontenc}
\usepackage{amsmath, amssymb, amsthm}
\usepackage{hyperref}
\usepackage{tikz}
\usetikzlibrary{arrows.meta,positioning}
\usepackage{amscd}
\usetikzlibrary{arrows.meta,positioning,shapes.geometric,calc}
\usepackage{xurl} 
\usepackage{etoolbox}
\apptocmd{\thebibliography}{\sloppy}{}{} 

\definecolor{vertcol}{RGB}{30,120,180}
\definecolor{horizcol}{RGB}{180,70,30}

\usepackage{url}

\providecommand{\keywords}[1]{\textbf{Keywords:} #1}
\providecommand{\subjclass}[1]{\textbf{2020 Mathematics Subject Classification:} #1}

\newtheorem{theorem}{Theorem}[section]
\newtheorem{lemma}[theorem]{Lemma}
\newtheorem{proposition}[theorem]{Proposition}
\newtheorem{corollary}[theorem]{Corollary}
\newtheorem{conjecture}[theorem]{Conjecture}
\newtheorem{remark}[theorem]{Remark}
\theoremstyle{definition}
\newtheorem{definition}[theorem]{Definition}
\newtheorem{example}[theorem]{Example}

\newcommand{\Jac}{\operatorname{Jac}}
\newcommand{\Ham}{\operatorname{Ham}}

\begin{document}
	
	\title{Abel--Jacobi Map and Symplectic Topology}
	\author{St\'ephane Tchuiaga\\Department of Mathematics, University of Buea\\
		\texttt{tchuiaga.kameni@ubuea.cm}}
	\maketitle
	
	\begin{abstract}

		We develop a harmonic-coordinate approach to the identity component $G_\omega(\Sigma_g)$ of the symplectomorphism group of a closed oriented surface of genus $g\ge2$. Using an intrinsic decomposition of the Abel--Jacobi displacement into a global flux part and a zero-average harmonic fluctuation, we introduce the harmonic flux norm and prove its non-degeneracy on the full identity component without Floer theory. We also show the norm is continuous in the $C^0$-topology, deduce that $\mathrm{Ham}(\Sigma_g,\omega)$ is $C^0$-closed inside $G_\omega(\Sigma_g)$, and produce a locally injective harmonic-coordinate chart near the identity. Along the way we derive first-order expansions for the norm, quantitative fixed-point obstructions, and propose a finite-dimensional persistence invariant (the harmonic barcode) associated to the harmonic displacement filtration.
	\end{abstract}
	
	\keywords{Harmonic flux norm, symplectomorphism groups, Hamiltonian
		diffeomorphisms, Abel--Jacobi map, harmonic forms, $C^0$-symplectic topology,
		periodic orbits.}
	
	\subjclass{53D05, 53D40, 57K20, 37J10.}

	\section{Introduction}
	The study of symplectomorphism groups lies at the heart of modern symplectic topology. For a closed symplectic manifold $(M,\omega)$, the identity component $G_\omega(M)$ of the group of symplectomorphisms carries a rich geometric and topological structure. Understanding the interplay between its algebraic, dynamical, and metric properties has been a central theme since the foundational work of Banyaga \cite{Banyaga1997}. Among the most important invariants associated with $G_\omega(M)$ is the \emph{flux homomorphism}, which measures the cohomological displacement of points under a symplectic isotopy. For closed surfaces of genus $g\ge 2$, a classical theorem asserts that the flux group vanishes \cite{Banyaga1997}. Consequently, the flux homomorphism becomes a surjection
	\[
	\operatorname{Flux}: G_\omega(\Sigma_g) \longrightarrow H^1(\Sigma_g;\mathbb{R}),
	\]
	whose kernel is precisely the Hamiltonian subgroup $\operatorname{Ham}(\Sigma_g,\omega)$. While the flux provides crucial cohomological information, it does not measure the \emph{pointwise} displacement of individual points on the surface. This gap motivates the search for a finer invariant that simultaneously captures global topological and local geometric data.
	
	\medskip\noindent
	In the erratum \cite{Tchuaga2021Erratum}, we introduced a novel invariant---the \emph{harmonic flux norm}---which refines the classical flux homomorphism by incorporating Hodge-theoretic data. For a closed oriented surface $(\Sigma,\omega)$, this norm is defined by
	\[
	\|\phi\|_{\mathrm{harm}} := \sup_{\substack{\alpha\in\mathcal H^1(\Sigma)\\ \|\alpha\|_{L^2}=1}} \sup_{x\in \Sigma} |\Delta(\phi,\alpha)_x|,
	\]
	where $\mathcal H^1(\Sigma)$ denotes the finite-dimensional space of harmonic $1$-forms (with respect to a chosen Riemannian metric compatible with $\omega$), and
	\[
	\Delta(\phi,\alpha)_x = \langle [\alpha], \widetilde{S}_\omega(\Phi) \rangle - \operatorname{Vol}(\Sigma) \int_{\mathcal O_x^\Phi} \alpha
	\]
	for any symplectic isotopy $\Phi$ from $\mathrm{id}$ to $\phi$. Here, $\mathcal O_x^\Phi$ denotes the orbit of the point $x$ under the isotopy, and the integral measures the signed area swept out by this orbit weighted by the harmonic form $\alpha$. The first term records the global flux contribution, while the second encodes local displacement.
	
	While it was shown in \cite{Tchuaga2021Erratum} that this quantity is always \emph{finite}, the question of whether it is \emph{non-degenerate}---that is, whether $\|\phi\|_{\mathrm{harm}}=0$ implies $\phi=\mathrm{id}$---remained open. The main goal of this paper is to settle this conjecture affirmatively and to do so using a purely geometric construction that bypasses infinite-dimensional Floer-theoretic machinery.
	
	\medskip\noindent
	Our approach rests on the classical Abel--Jacobi map. For a closed oriented surface $\Sigma_g$ of genus $g\ge2$, fix a basepoint $x_0\in\Sigma_g$ and an $L^2$-orthonormal basis $\{h_1,\ldots,h_{2g}\}$ of harmonic $1$-forms. The Abel--Jacobi map
	\[
	\mathcal A:\Sigma_g \longrightarrow \operatorname{Jac}(\Sigma_g):=\mathbb R^{2g}/\Gamma,
	\qquad
	\mathcal A(x)=\left(\int_{x_0}^x h_1,\dots,\int_{x_0}^x h_{2g}\right)\bmod\Gamma,
	\]
	is a holomorphic embedding, where $\Gamma$ is the period lattice determined by the basis. The key insight is that the displacement of the Abel--Jacobi image under a symplectomorphism admits a canonical decomposition. For any lift $\widetilde\phi$ of $\phi$ to the universal cover and any lift $\widetilde{\mathcal A}$ of the Abel--Jacobi map, we have
	\[
	\widetilde{\mathcal A}(\widetilde\phi(\tilde x))-\widetilde{\mathcal A}(\tilde x)
	=
	\widetilde{\mathbf c}(\phi)+\widetilde{\mathcal F}(\phi)(\tilde x),
	\]
	where $\widetilde{\mathbf c}(\phi)\in\mathbb R^{2g}\cong H^1(\Sigma_g;\mathbb R)$ is a constant vector representing the flux class, and $\widetilde{\mathcal F}(\phi)$ has zero average over a fundamental domain. The term $\widetilde{\mathbf c}(\phi)$ encodes the global cohomological displacement, while $\widetilde{\mathcal F}(\phi)$ captures the pointwise harmonic fluctuation. This decomposition separates the cohomological and local geometric data of the symplectomorphism, providing a natural framework for studying its dynamics.
	
	\medskip\noindent
	Using this framework, we establish the following three principal results. First, we resolve the non-degeneracy conjecture for the harmonic flux norm:
	
	\begin{theorem}[Non-degeneracy of the Harmonic Flux Norm]
		\label{thm:intro_nondeg}
		Let $(\Sigma_g,\omega)$ be a closed oriented surface of genus $g\ge2$. Then the harmonic flux norm is non-degenerate on the full identity component of the symplectomorphism group:
		\[
		\|\phi\|_{\mathrm{harm}}=0 \quad\Longleftrightarrow\quad \phi=\mathrm{id}.
		\]
	\end{theorem}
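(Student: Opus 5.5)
The plan is to translate the vanishing of the norm into the Abel--Jacobi decomposition above, and then show that a symplectomorphism with zero harmonic displacement is the identity.

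\emph{Step 1: rewrite the norm.} Fix $\phi\in G_\omega(\Sigma_g)$ and a symplectic isotopy $\Phi$ from $\mathrm{id}$ to $\phi$. Pair the decomposition with the harmonic basis. For $\alpha=\sum a_i h_i$ with $\sum a_i^2=1$, the orbit integral $\int_{\mathcal O_x^\Phi}\alpha$ equals $\langle a, \widetilde{\mathcal A}(\widetilde\phi(\tilde x))-\widetilde{\mathcal A}(\tilde x)\rangle$, where $\widetilde\phi$ is the lift determined by $\Phi$. The flux pairing $\langle[\alpha],\widetilde S_\omega(\Phi)\rangle$ is the average of this quantity over a fundamental domain, suitably normalized by $\operatorname{Vol}(\Sigma)$. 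So I expect
\[
\|\phi\|_{\mathrm{harm}}=c\,\sup_{\tilde x}\bigl|\widetilde{\mathcal F}(\phi)(\tilde x)\bigr|_{\mathbb R^{2g}}
\]
for a positive constant $c$, possibly up to a term involving $\widetilde{\mathbf c}(\phi)$ if the normalization does not cancel exactly. I will check this normalization first, and also check independence from the choice of $\Phi$; the latter should follow because $\pi_1(G_\omega(\Sigma_g))$ is trivial for $g\ge2$.

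\emph{Step 2: vanishing forces a translation.} If $\|\phi\|_{\mathrm{harm}}=0$, then $\widetilde{\mathcal F}(\phi)\equiv0$. Hence $\mathcal A\circ\phi=\tau_v\circ\mathcal A$, where $\tau_v$ is translation on $\operatorname{Jac}(\Sigma_g)$ by $v=\widetilde{\mathbf c}(\phi)\bmod\Gamma$. So the translation $\tau_v$ preserves the curve $C=\mathcal A(\Sigma_g)$ and restricts on it to $\phi$.

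\emph{Step 3: a curve of genus $g\ge2$ is not translation-invariant.} This is the step I expect to be the main obstacle. The first idea is that translations of a torus are holomorphic, so $\phi=\mathcal A^{-1}\tau_v\mathcal A$ would be a biholomorphic automorphism of $\Sigma_g$. The automorphism group of such a surface is finite (Hurwitz), and $\phi$ lies in the identity component and is connected to $\mathrm{id}$ through the family $\tau_{tv}$. That alone does not suffice, however, since $\tau_{tv}$ need not preserve $C$.

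The more robust argument is via the stabilizer. The set $\{w:\tau_w C=C\}$ is a closed subgroup of the Jacobian. If it is infinite, its identity component is a positive-dimensional subtorus $T$, and its orbits would foliate $C$ by real curves. But $C$ generates the Jacobian and has genus $\ge2$, so $C$ is not an elliptic curve or a subtorus coset; one could alternatively use that a surface of genus $\ge2$ admits no nowhere-vanishing vector field. So the stabilizer is finite, and $v$ lies in a finite group.

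\emph{Step 4: close up.} Run the same construction along the isotopy $\phi_t$. The map $t\mapsto v_t$ is continuous with $v_0=0$. One must ensure each $\phi_t$ also has zero norm, which is not automatic. Instead I will use connectedness differently. From Step 2, $\phi$ is a holomorphic automorphism of finite order $k$ commuting with the lift structure, and it acts on $H_1$ trivially because it is isotopic to the identity. An automorphism of a genus $\ge2$ surface acting trivially on homology is the identity, by the Lefschetz fixed point count together with Hurwitz's theorem. Hence $\phi=\mathrm{id}$.

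The converse direction is immediate: for $\phi=\mathrm{id}$ take the constant isotopy, so every orbit is a point and the flux vanishes.
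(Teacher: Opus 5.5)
Your argument is correct, but the decisive step differs from the paper's. The paper proves a general rigidity statement (Proposition~\ref{prop:aj_rigidity}). If $\mathcal A(\Sigma_g)+a=\mathcal A(\Sigma_g)$, the transported map $\tau=\mathcal A^{-1}\circ T_a\circ\mathcal A$ acts trivially on $H_1$ because $T_a$ is homotopic to the identity of the torus. So $L(\tau)=2-2g\neq0$ yields a fixed point of $\tau$, and a translation of the Jacobian with a fixed point is trivial. Hence $a=0$, and injectivity of $\mathcal A$ gives $\phi=\mathrm{id}$.

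In the theorem's situation $\tau$ is exactly your $\phi=\mathcal A^{-1}\circ\tau_v\circ\mathcal A$, so both proofs end with a Lefschetz computation on the same map. You use it in the stronger form ``a nontrivial biholomorphism of a genus $\ge2$ surface cannot act trivially on $H_1$'': its fixed points would be isolated of index $+1$, giving $\#\operatorname{Fix}=2-2g<0$. That requires the holomorphicity of $\mathcal A$ and the positivity of holomorphic fixed-point indices. The paper only needs $L\neq0$ to produce one fixed point. In fact, since $\phi$ is isotopic to $\mathrm{id}$, you could apply Lefschetz directly to $\phi$, pick $x_*$ with $\phi(x_*)=x_*$, and read off $v=\mathcal A(\phi(x_*))-\mathcal A(x_*)=0$ with no complex geometry at all.

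Your finite-stabilizer argument in Step 3 is valid but superfluous: it only places $v$ in a finite group, and Step 4 never uses finiteness of the order. The normalization hedge in Step 1 is also harmless. Any leftover term is independent of $x$, so vanishing of the norm still forces $\widetilde{\mathcal F}(\phi)$ to be constant, hence zero by its zero mean. The paper avoids the issue by taking $\sup_x\|\mathcal F(\phi)(x)\|$ as the definition.
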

	
	The proof exploits the rigidity of the Abel--Jacobi image: for $g\ge2$, the image $\mathcal A(\Sigma_g)\subset \operatorname{Jac}(\Sigma_g)$ admits no non-trivial translation symmetries. If the harmonic fluctuation vanishes, the Abel--Jacobi displacement is globally constant. The surface is then translated within its Jacobian; since the image has no translation symmetry, this constant must be zero. Injectivity of $\mathcal A$ then forces $\phi=\mathrm{id}$.
	
	Second, we establish that the harmonic flux norm is well-behaved with respect to the $C^0$-topology on $G_\omega(\Sigma_g)$:
	
	\begin{theorem}[$C^0$-Continuity and Closedness]
		\label{thm:intro_continuity}
		The harmonic flux norm $\|\cdot\|_{\mathrm{harm}}: G_\omega(\Sigma_g) \longrightarrow [0,\infty)$ is continuous with respect to the $C^0$-topology. Consequently, $\operatorname{Ham}(\Sigma_g,\omega)$ is a $C^0$-closed subgroup of $G_\omega(\Sigma_g)$.
	\end{theorem}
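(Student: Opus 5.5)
The plan is to work entirely with the Abel--Jacobi decomposition
\[
\widetilde{\mathcal A}(\widetilde\phi(\tilde x))-\widetilde{\mathcal A}(\tilde x)=\widetilde{\mathbf c}(\phi)+\widetilde{\mathcal F}(\phi)(\tilde x),
\]
and to read both pieces as functions of the lift $\widetilde\phi$ alone, so that the dependence on the isotopy $\Phi$ disappears. The first step is to express $\Delta(\phi,\alpha)_x$ in terms of these data. For $\alpha=\sum_i a_i h_i$ with $\sum_i a_i^2=1$, the orbit integral $\int_{\mathcal O_x^\Phi}\alpha$ equals $\langle a,\widetilde{\mathcal A}(\widetilde\phi(\tilde x))-\widetilde{\mathcal A}(\tilde x)\rangle$. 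Here $\widetilde\phi$ is the lift obtained by lifting $\Phi$, and this holds because harmonic forms are closed, so the integral depends only on the homotopy class of the path. The flux term $\langle[\alpha],\widetilde S_\omega(\Phi)\rangle$ is then $\mathrm{Vol}(\Sigma)$ times the average of this quantity over a fundamental domain, i.e.\ $\mathrm{Vol}(\Sigma)\langle a,\widetilde{\mathbf c}(\phi)\rangle$; this uses the identification of flux with averaged displacement. Hence $\Delta(\phi,\alpha)_x=-\mathrm{Vol}(\Sigma)\langle a,\widetilde{\mathcal F}(\phi)(\tilde x)\rangle$, up to the normalisation used in the paper. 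Taking the supremum over unit $a$ gives the formula
\[
\|\phi\|_{\mathrm{harm}}=\mathrm{Vol}(\Sigma)\,\sup_{x}|\widetilde{\mathcal F}(\phi)(\tilde x)|_{\mathbb R^{2g}}.
\]
In genus $g\ge2$ the lift $\widetilde\phi$ commuting with deck transformations is unique, since the centre of $\pi_1$ is trivial. This gives well-definedness at the same time.

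\textbf{Continuity.} Suppose $\phi_n\to\phi$ in $C^0$. Near $\phi$ the $C^0$-close maps admit canonical lifts $\widetilde\phi_n$ that are uniformly $C^0$-close to $\widetilde\phi$ on the universal cover. These are the lifts that stay within a small distance of $\widetilde\phi$, and they are equivariant, so they coincide with the canonical lifts. Since $\widetilde{\mathcal A}$ is uniformly Lipschitz, being the integral of bounded harmonic forms, the displacement functions $D_n(\tilde x)=\widetilde{\mathcal A}(\widetilde\phi_n\tilde x)-\widetilde{\mathcal A}(\tilde x)$ converge uniformly to $D$. They are also deck-invariant, hence functions on $\Sigma$. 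The average $\widetilde{\mathbf c}$ is the integral against $\omega$ over a fundamental domain, so it is continuous under uniform convergence. Therefore $\widetilde{\mathcal F}=D-\widetilde{\mathbf c}$ converges uniformly, and its sup norm, which is the norm by the formula above, converges.

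\textbf{Closedness of Ham.} The aim is to show that $\mathrm{Flux}(\phi)$ is identified with $\widetilde{\mathbf c}(\phi)$, via the isomorphism $\mathbb R^{2g}\cong H^1$, and that $\widetilde{\mathbf c}$ is $C^0$-continuous by the argument just given. Then $\mathrm{Ham}=\ker\mathrm{Flux}=\widetilde{\mathbf c}^{-1}(0)$ is the preimage of a closed set under a continuous map, so it is $C^0$-closed. Here we use that the flux group vanishes for $g\ge2$, so Flux is genuinely $H^1$-valued and not valued in a quotient torus.

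\textbf{Main obstacle.} The delicate step is the lift argument. One must show that $C^0$-convergence $\phi_n\to\phi$ yields uniformly convergent equivariant lifts, and that these lifts agree with the lifts coming from isotopies, which the flux and orbit integrals require. I would handle this in two parts. First, for $\phi_n$ $C^0$-close to $\phi$, the map $\phi^{-1}\phi_n$ is close to the identity, and the geodesic homotopy to the identity gives a preferred lift close to $\mathrm{id}$. Second, this lift must be equivariant, which follows from uniqueness of lifts at bounded distance from the identity in the hyperbolic universal cover. The identification of $\widetilde{\mathbf c}$ with flux is then Fubini's theorem applied to $\int_\Sigma\int_{\mathcal O_x}\alpha\,\omega$.
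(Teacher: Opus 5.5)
Your proof is correct and follows the same skeleton as the paper's. You identify the norm with $\sup_x|\widetilde{\mathcal F}(\phi)|$, obtain uniform convergence of lifts on a fundamental domain, deduce convergence of $\widetilde{\mathbf c}(\phi_n)$ and $\widetilde{\mathcal F}(\phi_n)$ and hence of the norm, and write $\mathrm{Ham}=\widetilde{\mathbf c}^{-1}(0)$.

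The real difference is how the lifts are controlled.
\begin{itemize}
\item The paper normalizes $\widetilde\phi_n$ at a basepoint $\tilde x_0$ and applies Lemma~\ref{lem:stability_lifts}. That lemma is a general covering-space fact giving uniform convergence on the compact connected set $\overline D$.
\item You instead use the unique deck-equivariant lift, whose uniqueness comes from the trivial centre of $\pi_1(\Sigma_g)$. You build it near $\widetilde\phi$ from the geodesic homotopy of $\phi^{-1}\phi_n$ to the identity, which gives closeness on all of $\widetilde\Sigma_g$.
\end{itemize}

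For continuity of the norm alone, either approach works. Replacing a lift by $\delta\circ\widetilde\phi$ shifts both the displacement and $\widetilde{\mathbf c}$ by the same period vector of $\delta$, so $\widetilde{\mathcal F}$ is unchanged.

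For closedness, the same computation shows that $\widetilde{\mathbf c}$ is independent of the lift only modulo $\Gamma$. Passing $\widetilde{\mathbf c}(\phi_n)=0$ to the limit therefore requires the convergent lifts to be exactly the isotopy lifts that define the flux; otherwise you would only get $\widetilde{\mathbf c}(\phi)\in\Gamma$. Your uniqueness-of-equivariant-lifts argument supplies precisely this identification. The paper leaves it implicit, resting on the lift-independence clause of Lemma~\ref{lem:AJ_decomposition}, which as stated holds only modulo $\Gamma$.

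Your derivation of the sup formula from the orbit-integral definition, with the extra factor $\mathrm{Vol}(\Sigma)$, is also more explicit than the paper's Remark~\ref{rem:norm_equivalence}. That constant factor is irrelevant for continuity.
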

	
	This gives a new proof of a classical theorem in $C^0$-symplectic topology: the Hamiltonian subgroup is closed in the uniform topology. Unlike the standard proof, which relies on the theory of $C^0$-flux, our argument follows directly from the continuity of the harmonic coordinate map.
	
	Third, we show that the harmonic fluctuation component defines a local coordinate chart near the identity:
	
	\begin{theorem}[Local Coordinate Embedding]
		\label{thm:intro_injectivity}
		There exists a $C^0$-neighborhood $\mathcal{U}$ of the identity in $G_\omega(\Sigma_g)$ such that the harmonic fluctuation map
		\[
		\mathcal{F}: \mathcal{U} \longrightarrow C^0_0(\Sigma_g; \mathbb{R}^{2g}), \qquad \phi \mapsto \mathcal{F}(\phi)
		\]
		is injective and continuous.
	\end{theorem}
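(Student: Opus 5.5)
We prove Theorem~\ref{thm:intro_injectivity} in two parts: continuity, which follows from the explicit formula for the decomposition, and injectivity, which reduces to the rigidity argument behind Theorem~\ref{thm:intro_nondeg}.

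\emph{Normalization near the identity.} Fix a fundamental domain $D\subset\widetilde\Sigma_g$ and write $\mu$ for the lifted area form. Take $\mathcal U$ to be the $C^0$-ball of radius $\varepsilon$ about the identity, with $\varepsilon$ smaller than half the injectivity radius of the hyperbolic (or any fixed) metric. For $\phi\in\mathcal U$ and $x\in\Sigma_g$, the points $x$ and $\phi(x)$ are then joined by a unique short geodesic $\gamma_{x}^{\phi}$. This gives a canonical lift $\widetilde\phi$ that is $\varepsilon$-close to the identity on $\widetilde\Sigma_g$. The displacement $\widetilde{\mathcal A}(\widetilde\phi(\tilde x))-\widetilde{\mathcal A}(\tilde x)$ then equals $\bigl(\int_{\gamma_x^\phi}h_j\bigr)_{j}$, which is a genuine $\Gamma$-periodic, hence $\Sigma_g$-defined, function $d_\phi$. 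We set
\[
\mathbf c(\phi)=\frac{1}{\operatorname{Vol}(\Sigma_g)}\int_{\Sigma_g} d_\phi\,\omega,\qquad \mathcal F(\phi)=d_\phi-\mathbf c(\phi),
\]
so that $\mathcal F(\phi)$ has zero average and lies in $C^0_0(\Sigma_g;\mathbb R^{2g})$.

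\emph{Continuity.} Short geodesics depend continuously on their endpoints, uniformly over the compact set $\{d(x,y)\le\varepsilon\}$, and the $h_j$ are bounded. Hence
\[
\|d_\phi-d_\psi\|_{C^0}\le C\sup_x d(\phi(x),\psi(x))
\]
for $\phi,\psi\in\mathcal U$; the Lipschitz bound comes from the first variation of $\int_\gamma h_j$. Averaging preserves this estimate, so both $\mathbf c$ and $\mathcal F$ are $C^0$-continuous.

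\emph{Injectivity.} Suppose $\mathcal F(\phi)=\mathcal F(\psi)$ with $\phi,\psi\in\mathcal U$. Then $d_\phi-d_\psi\equiv \mathbf v:=\mathbf c(\phi)-\mathbf c(\psi)$ is a constant vector. Set $y=\psi(x)$. Then $\widetilde{\mathcal A}(\widetilde\phi\,\widetilde\psi^{-1}(\tilde y))=\widetilde{\mathcal A}(\tilde y)+\mathbf v$ for all $y$, which means the translation $\tau_{\mathbf v}$ of $\operatorname{Jac}(\Sigma_g)$ maps $\mathcal A(\Sigma_g)$ onto itself. This is exactly the situation in the proof of Theorem~\ref{thm:intro_nondeg}. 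Since $g\ge2$, the Abel--Jacobi curve has no nontrivial translation symmetry, so $\mathbf v\in\Gamma$. Because $|\mathbf v|\le 2C\varepsilon$ is smaller than the shortest lattice vector, we get $\mathbf v=0$. Injectivity of $\mathcal A$ then gives $\phi\psi^{-1}=\mathrm{id}$.

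\emph{Main obstacle.} The delicate step is the no-translation rigidity. If the paper's earlier proof of Theorem~\ref{thm:intro_nondeg} is not reusable verbatim, I would argue directly as follows. A translation preserving the curve $C=\mathcal A(\Sigma_g)$ induces a holomorphic automorphism $\sigma$ of $\Sigma_g$ acting trivially on $H^0(K)$, since $\tau_{\mathbf v}^*$ fixes the invariant forms $dz_j$. The action of $\operatorname{Aut}(\Sigma_g)$ on $H^1$ is faithful for $g\ge2$, so $\sigma=\mathrm{id}$ and $\mathbf v\in\Gamma$.

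One subtlety remains: $\phi\psi^{-1}$ is only a homeomorphism, not a priori holomorphic. This is why I route the argument through the set-level statement $\tau_{\mathbf v}(C)=C$ and use the fact that the restriction of $\tau_{\mathbf v}$ to $C$ is automatically holomorphic.
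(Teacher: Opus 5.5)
Your proposal is correct and follows essentially the same route as the paper's proof of Theorem~\ref{thm:flux_coordinate_map}. Continuity comes from controlling normalized lifts near the identity. For injectivity, a constant difference of Abel--Jacobi displacements gives a translation symmetry of $\mathcal A(\Sigma_g)$, rigidity (Proposition~\ref{prop:aj_rigidity}) makes that translation vanish modulo $\Gamma$, the shortest-lattice-vector bound then makes it vanish in $\mathbb R^{2g}$, and injectivity of $\mathcal A$ finishes. Your short-geodesic normalization replaces Lemma~\ref{lem:stability_lifts} and Proposition~\ref{DECK}: it makes the displacement a genuine function on $\Sigma_g$ from the start and gives an explicit Lipschitz bound. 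Your fallback rigidity argument, via faithfulness of the $\operatorname{Aut}(\Sigma_g)$-action on $H^1$, is a valid alternative to the paper's Lefschetz computation.
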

	
	This result provides a canonical embedding of a neighborhood of the identity into a Banach space, mirroring the role of the flux homomorphism in providing local coordinates near the identity in the symplectomorphism group.
	
	\medskip\noindent
	The harmonic decomposition yields additional geometric insights. The fixed point equation for a symplectomorphism can be reformulated as a global level-set problem for the harmonic primitive: a point $x\in\Sigma_g$ is fixed by $\phi$ if and only if $\mathcal F(\phi)(x)+\mathbf c(\phi)=0$ in $\operatorname{Jac}(\Sigma_g)$. This leads to a quantitative obstruction: if the harmonic oscillation is smaller than the distance from the flux translation to the period lattice, then $\phi$ has no fixed points. Thus the harmonic norm measures the minimal fluctuation required to cancel a given cohomological displacement.
	
	\medskip\noindent
	A distinguishing feature of our approach is its elementary nature. Unlike many results in $C^0$-symplectic topology, which rely on Floer homology, spectral invariants, or other infinite-dimensional techniques, our proofs exploit only the finite-dimensional geometry of the Abel--Jacobi embedding and the classical Torelli theorem. This not only makes the arguments more accessible but also suggests potential generalizations to higher-dimensional symplectic manifolds via the Albanese map. We discuss such extensions, as well as a proposed construction of harmonic barcodes using persistent homology, in the final section.
	
	\medskip\noindent
	The paper is organized as follows. Section 2 reviews the necessary preliminaries on symplectic flux theory, harmonic forms, and the Abel--Jacobi map, and proves the fundamental decomposition lemma (Lemma~\ref{lem:AJ_decomposition}). Section 3 defines the harmonic flux norm and establishes its key properties: the first-order expansion, non-degeneracy on both the Hamiltonian subgroup and the full identity component, $C^0$-continuity, and local injectivity. In Subsection~3.1, we apply the framework to the fixed point problem, deriving a geometric reformulation and a quantitative obstruction. Section 4 discusses perspectives and open problems, including extensions to topological area-preserving homeomorphisms, generalizations via the Albanese map, and the proposed harmonic barcodes.

	\section{Preliminaries}
	
	In this section we recall the fundamental notions of symplectic geometry, flux
	theory, harmonic forms, and the Abel--Jacobi map that are used throughout the
	paper \cite{LeRoux2010, Petersen2006, McDuffSalamon, Polterovich,
		HoferZehnder}. For a comprehensive treatment, we refer the reader to the
	monographs \cite{Tchuaga2018} and \cite{Banyaga1997}.
	
	\subsection{Symplectic manifolds and symplectomorphism groups}
	
	A \emph{symplectic manifold} is a pair $(M,\omega)$, where $M$ is a smooth
	manifold and $\omega$ is a closed, non-degenerate $2$-form. A diffeomorphism
	$\phi:M\to M$ satisfying $\phi^*\omega=\omega$ is called a
	\emph{symplectomorphism}. The group of all symplectomorphisms is denoted
	$\operatorname{Symp}(M,\omega)$. We equip $\operatorname{Symp}(M,\omega)$ with
	the $C^\infty$-compact open topology \cite{MH}, and its identity component is
	denoted $G_\omega(M)$. For a closed connected manifold, a smooth isotopy
	$\{\phi_t\}_{t\in[0,1]}\subset G_\omega(M)$ with $\phi_0=\mathrm{id}$ is
	generated by a time-dependent vector field $X_t$, defined by
	$\frac{d}{dt}\phi_t = X_t\circ\phi_t$. Since each $\phi_t$ preserves $\omega$,
	the vector fields satisfy $\mathcal L_{X_t}\omega=0$. A symplectic vector field
	$X$ is \emph{Hamiltonian} if $\iota_X\omega$ is exact; its flow consists of
	Hamiltonian diffeomorphisms. The group of Hamiltonian diffeomorphisms, denoted
	$\operatorname{Ham}(M,\omega)$, is a normal subgroup of $G_\omega(M)$
	\cite{Banyaga1997}.
	
	\subsubsection{Flux homomorphism}
	
	The \emph{flux homomorphism} on the universal cover
	$\widetilde{G_\omega(M)}$ is defined by
	\[
	\widetilde{\operatorname{Flux}}(\{\phi_t\}) = \int_0^1 [\iota_{X_t}\omega]\,dt
	\in H^1(M;\mathbb R),
	\]
	where $\{\phi_t\}$ is an isotopy from the identity. It descends to a
	homomorphism
	\[
	\operatorname{Flux}: G_\omega(M) \longrightarrow H^1(M;\mathbb R)/\Gamma_\omega,
	\]
	with $\Gamma_\omega$ the flux group. A classical theorem of Banyaga
	\cite{Banyaga1997} states that $\ker \operatorname{Flux} =
	\operatorname{Ham}(M,\omega)$. For a smooth closed oriented surface $\Sigma_g$
	of genus $g\geq 2$, the flux group is trivial (\cite{Banyaga1997}, §4), hence
	$\Gamma_\omega=\{0\}$. In this context, the two flux maps have the same range.
	
	\subsection{Harmonic forms and primitives}
	
	Let $(M,g)$ be a closed Riemannian manifold. The space of harmonic $1$-forms is
	\[
	\mathcal H^1(M)=\{\alpha\in\Omega^1(M): d\alpha=0,\; d^*\alpha=0\},
	\]
	where $d^*$ is the codifferential. By Hodge theory, $\mathcal H^1(M)\cong
	H^1(M;\mathbb R)$, so $\dim\mathcal H^1(M)=b_1(M)$. The space carries the
	$L^2$-inner product 
	\[
	\langle \alpha,\beta\rangle_{L^2}=\int_M \alpha\wedge\star\beta.
	\]  
	We shall denote by $\|\cdot\|_{L^2}$ the norm associated with the inner product
	$\langle \cdot,\cdot\rangle_{L^2}$.
	
	Note that for $\phi\in G_\omega(M)$ and $\alpha\in\mathcal H^1(M)$, the pullback
	$\phi^*\alpha$ is closed and cohomologous to $\alpha$. Hence, there is a unique
	zero-mean function $f_\alpha^\phi$ (with respect to the symplectic volume
	$\omega^n$) such that
	\[
	\phi^*\alpha - \alpha = d f_\alpha^\phi.
	\]
	This primitive depends only on $\phi$ and the cohomology class of $\alpha$.
	
	\subsection{The Abel-Jacobi map}
	
	For a closed oriented surface $\Sigma_g$ of genus $g\ge 2$, fix a base point
	$x_0$ and an $L^2$-orthonormal basis $\{h_1,\dots,h_{2g}\}$ of $\mathcal
	H^1(\Sigma_g)$. The Abel-Jacobi map is
	\[
	\mathcal A:\Sigma_g \longrightarrow \operatorname{Jac}(\Sigma_g):=\mathbb
	R^{2g}/\Gamma,
	\qquad
	\mathcal A(x)=\left(\int_{x_0}^x h_1,\dots,\int_{x_0}^x h_{2g}\right)\bmod\Gamma,
	\]
	where $\Gamma$ is the period lattice. For $g\ge2$, the harmonic forms have no
	common zeros, so $\mathcal A$ is a holomorphic embedding (see
	\cite{GriffithsHarris}). Moreover, a classical consequence of the Torelli
	theorem (see \cite{GriffithsHarris}, Chapter III, Section 6) states that the
	image $\mathcal A(\Sigma_g)$ admits no non-trivial translation symmetry:
	\[
	\mathcal A(\Sigma_g) + v = \mathcal A(\Sigma_g) \quad \Longrightarrow \quad v =
	0 \in \Jac(\Sigma_g).
	\]
	
	Let $\widetilde{\mathcal A}:\widetilde{\Sigma}_g\rightarrow\mathbb R^{2g}$ be
	the lifted Abel--Jacobi map associated with this basis. For $\phi\in
	G_\omega(\Sigma_g)$, choose a symplectic isotopy $\Phi=\{\phi_t\}_{t\in[0,1]}$
	from $\mathrm{id}$ to $\phi$, generated by $X_t$, and let $\widetilde\phi:
	\widetilde{\Sigma}_g\rightarrow\widetilde{\Sigma}_g$ be a lift of $\phi$.
	Define
	\[
	\widetilde{\mathbf c}(\phi)
	=
	\frac1{\operatorname{Vol}(\Sigma_g)}
	\int_D
	\left(
	\widetilde{\mathcal A}(\widetilde\phi(\tilde x))
	-
	\widetilde{\mathcal A}(\tilde x)
	\right)
	\,d\operatorname{Vol},
	\]
	where $D$ is a fundamental domain of the universal covering. Then
	$\widetilde{\mathbf c}(\phi)\in \mathbb R^{2g} \simeq H^1(\Sigma_g;\mathbb
	R)$ is a lift of the flux class.

	The following lemma shows that the displacement induced by a symplectomorphism
	splits into a constant global shift $\widetilde{\mathbf c}(\phi)$ (the "Flux
	component") and a point-dependent wiggle $\widetilde{\mathcal F}(\phi)$ (the
	"fluctuation").
	
	\begin{lemma}[Decomposition of the Abel--Jacobi displacement]
		\label{lem:AJ_decomposition}
		Let $(\Sigma_g,\omega)$ be a closed oriented surface of genus $g\ge2$, and let
		$\{h_1,\ldots,h_{2g}\}$ be an $L^2$-orthonormal basis of harmonic $1$-forms.
		Then the following hold:
		
		\begin{enumerate}
			\item
			There is a unique decomposition
			\[
			\widetilde{\mathcal A}(\widetilde\phi(\tilde x))
			-
			\widetilde{\mathcal A}(\tilde x)
			=
			\widetilde{\mathbf c}(\phi)
			+
			\widetilde{\mathcal F}(\phi)(\tilde x),
			\]
			where $\widetilde{\mathbf c}(\phi)\in\mathbb R^{2g}$ is constant and $\int_D
			\widetilde{\mathcal F}(\phi)(\tilde x)\, d\operatorname{Vol} =0.$
			
			\item
			The vector $\widetilde{\mathbf c}(\phi)$ is independent of the choice of lift
			$\widetilde\phi:\widetilde\Sigma_g\to\widetilde\Sigma_g.$
			
			\item
			The class $\mathbf c(\phi) := \widetilde{\mathbf c}(\phi)\bmod\Gamma \in
			\operatorname{Jac}(\Sigma_g)$ depends only on the endpoint $\phi$ and is
			independent of the chosen symplectic isotopy joining $\mathrm{id}$ to
			$\phi$.	
			
			\item
			Since the flux group satisfies $\Gamma_\omega=\{0\},$ then $\widetilde{\mathbf
				c}(\phi)$ is uniquely determined by $\phi$ and is the unique lift of $\mathbf
			c(\phi).$
			
			\item 	For all $\phi,\psi\in G_\omega(\Sigma_g)$,
			\[
			\widetilde{\mathbf c}(\phi\circ\psi)
			=
			\widetilde{\mathbf c}(\phi)
			+
			\widetilde{\mathbf c}(\psi).
			\]
			
			\item
			If $\phi\in\Ham(\Sigma_g,\omega),$ then $\widetilde{\mathbf c}(\phi)=0.$
			
		\end{enumerate}
	\end{lemma}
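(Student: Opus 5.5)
Throughout, the lift $\widetilde\phi$ is the one obtained by lifting the isotopy $\Phi=\{\phi_t\}$ to $\widetilde\Sigma_g$ starting at the identity. Under deck transformations $\gamma$ it satisfies $\widetilde\phi\circ\gamma=\gamma\circ\widetilde\phi$. The lifted Abel--Jacobi map satisfies $\widetilde{\mathcal A}(\gamma\tilde x)=\widetilde{\mathcal A}(\tilde x)+p(\gamma)$, where $p(\gamma)\in\Gamma$ is the period vector. Hence the displacement
\[
\widetilde\delta(\tilde x):=\widetilde{\mathcal A}(\widetilde\phi(\tilde x))-\widetilde{\mathcal A}(\tilde x)
\]
is $\pi_1$-invariant and descends to a continuous function $\delta:\Sigma_g\to\mathbb R^{2g}$. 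In particular $\int_D\widetilde\delta\,d\mathrm{Vol}$ does not depend on the choice of fundamental domain $D$.

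Item 1 is then tautological. We set $\widetilde{\mathcal F}(\phi):=\widetilde\delta-\widetilde{\mathbf c}(\phi)$, which has zero average by the definition of $\widetilde{\mathbf c}(\phi)$. For uniqueness, suppose $\mathbf c+\mathcal F=\mathbf c'+\mathcal F'$ with both $\mathcal F,\mathcal F'$ of mean zero. Averaging over $D$ gives $\mathbf c=\mathbf c'$.

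For item 2, any other lift is $\gamma\circ\widetilde\phi$. Its displacement differs from $\widetilde\delta$ by the constant lattice vector $p(\gamma)$, so $\widetilde{\mathbf c}$ changes by an element of $\Gamma$. As stated, item 2 therefore holds only if "lift" means the isotopy-lift, or if the statement is read modulo $\Gamma$. I will prove it in that form and note that the honest statement is invariance mod $\Gamma$, which is all that item 3 requires.

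The key computation is the identification of $\widetilde{\mathbf c}(\phi)$ with the flux. For each $h_j$, write
\[
\widetilde\delta_j(\tilde x)=\int_0^1 h_j(X_t)(\phi_t(x))\,dt,
\]
the integral of $h_j$ along the orbit $\mathcal O^\Phi_x$. Integrating over $\Sigma_g$ and using that $\phi_t$ preserves the area gives
\[
\int_{\Sigma}\delta_j\,\omega=\int_0^1\!\!\int_\Sigma h_j(X_t)\,\omega\,dt .
\]
Now use the identity $h(X)\,\omega=\pm\, h\wedge\iota_X\omega$, which holds for 1-forms on a surface. This rewrites the right-hand side as
\[
\pm\int_0^1\!\!\int_\Sigma h_j\wedge\iota_{X_t}\omega\,dt=\pm\int_\Sigma h_j\wedge \widetilde{\operatorname{Flux}}(\Phi),
\]
which is the pairing of $[h_j]$ with the flux class, a purely cohomological quantity. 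Therefore $\widetilde{\mathbf c}(\phi)$, taken with the isotopy-lift, equals a fixed linear isomorphism $H^1(\Sigma_g;\mathbb R)\to\mathbb R^{2g}$ applied to $\widetilde{\operatorname{Flux}}(\Phi)$.

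The remaining items follow from this identification. For items 3 and 4: $\widetilde{\operatorname{Flux}}$ depends only on the homotopy class of the path $\Phi$. Since $\pi_1(G_\omega(\Sigma_g))$ has trivial flux image ($\Gamma_\omega=0$, as recalled above), it depends only on $\phi$. So $\widetilde{\mathbf c}(\phi)$ is well defined in $\mathbb R^{2g}$, and a fortiori $\mathbf c(\phi)$ is well defined in $\operatorname{Jac}$. Item 5 follows because $\widetilde{\operatorname{Flux}}$ is a homomorphism on the universal cover. It can also be checked directly: concatenate the isotopies and use area-preservation, $\int\delta_{\phi}\circ\psi=\int\delta_\phi$. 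Item 6 follows from Banyaga's theorem $\ker\operatorname{Flux}=\operatorname{Ham}$ together with $\Gamma_\omega=0$.

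The main obstacle is the careful bookkeeping in the flux identification: the sign and orientation convention for $h(X)\omega$ versus $h\wedge\iota_X\omega$, and the requirement that the lift be the isotopy-lift. I would verify the pointwise identity in local Darboux coordinates first, and only then assemble the global statement.
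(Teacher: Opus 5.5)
Your proposal is correct and follows essentially the paper's route: the orbit-integral formula for the lifted displacement, Fubini plus area preservation, the pointwise surface identity turning $h_j(X_t)\,\omega$ into a wedge with $\iota_{X_t}\omega$, identification of $\widetilde{\mathbf c}(\phi)$ with a linear image of the flux class, and then $\Gamma_\omega=\{0\}$, additivity of flux, and $\ker\operatorname{Flux}=\Ham(\Sigma_g,\omega)$ for the remaining items. Your caveat on item 2 is right: replacing $\widetilde\phi$ by $\gamma\circ\widetilde\phi$ shifts $\widetilde{\mathbf c}(\phi)$ by the period vector of $\gamma$, and the paper's proof tacitly uses the isotopy lift without ever addressing item 2. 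Your cup-product formulation is also cleaner than the paper's, which writes $\iota_X\omega\wedge h=h(X)\,\omega$ where the correct identity is $h(X)\,\omega=h\wedge\iota_X\omega$, and which labels $\int\iota_{X_t}\omega\wedge h_i$ as the $L^2$ product $\langle\operatorname{Flux}(\Phi),[h_i]\rangle_{L^2}$; neither slip affects the conclusions.
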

	
	\begin{proof}
		Let $\Phi=\{\phi_t\}_{t\in[0,1]}$ be a symplectic isotopy from $\mathrm{id}$ to
		$\phi$, generated by the time-dependent vector field $X_t$, so that
		$\frac{d}{dt}\phi_t=X_t\circ\phi_t.$ The corresponding flux form is
		$\beta_\Phi=\int_0^1\iota_{X_t}\omega\,dt,$ and the flux class is
		$\operatorname{Flux}(\Phi)=[\beta_\Phi]\in H^1(\Sigma_g;\mathbb R).$ Let
		$\widetilde\Phi = \{\widetilde\phi_t\}_{t\in[0,1]}$ be the lift of the isotopy
		to the universal cover, with $\widetilde\phi_0=\mathrm{id}$. For each harmonic
		form $h_i$, by the definition of the lifted Abel--Jacobi map we have
		\[
		\frac{d}{dt}
		\widetilde{\mathcal A}_i
		(\widetilde\phi_t(\tilde x))
		=
		\pi^*h_i
		\left(
		\frac{d}{dt}\widetilde\phi_t(\tilde x)
		\right).
		\]
		
		Since $\widetilde\Phi$ is the lift of $\Phi$, the lifted vector field is the
		pullback of $X_t$. Hence, writing $x=\pi(\tilde x)$,
		\[
		\frac{d}{dt} \widetilde{\mathcal A}_i (\widetilde\phi_t(\tilde x)) =
		h_i(X_t)(\phi_t(x)).
		\]
		Integrating in time gives
		\[
		\widetilde{\mathcal A}_i(\widetilde\phi(\tilde x))
		-
		\widetilde{\mathcal A}_i(\tilde x)
		=
		\int_0^1 h_i(X_t)(\phi_t(x))\,dt.
		\tag{A}
		\]
		
		Let $D\subset\widetilde{\Sigma}_g$ be a fundamental domain. Since $\pi:D
		\rightarrow\Sigma_g$ identifies $D$ with $\Sigma_g$ up to its boundary,
		integration over $D$ gives
		\[
		\int_D
		\left(
		\widetilde{\mathcal A}_i(\widetilde\phi(\tilde x))
		-
		\widetilde{\mathcal A}_i(\tilde x)
		\right)d\operatorname{Vol}
		=
		\int_0^1
		\int_{\Sigma_g}
		h_i(X_t)(\phi_t(x))
		\,\omega\,dt.
		\tag{B}
		\]
		
		Because $\phi_t$ is symplectic, $\phi_t^*\omega=\omega.$ Therefore, using the
		change of variables $y=\phi_t(x)$,
		\[
		\int_{\Sigma_g}
		h_i(X_t)(\phi_t(x))\,\omega
		=
		\int_{\Sigma_g}
		h_i(X_t)(y)\,\omega.
		\tag{T}
		\]
		Now we use the pointwise identity on a surface. Let $\eta=\iota_X\omega,$ for
		any one-form $h$, one has $\eta\wedge h=h(X)\omega.$ Indeed, for vectors $u,v$,
		\[
		(\eta\wedge h)(u,v)
		=
		\eta(u)h(v)-\eta(v)h(u)
		=
		\omega(X,u)h(v)-\omega(X,v)h(u),
		\]
		and since $\omega$ is the volume form on the surface this equals
		$h(X)\omega(u,v)$. Hence, $h_i(X_t)\omega = \iota_{X_t}\omega\wedge h_i.$
		Consequently,
		\[
		\int_{\Sigma_g}h_i(X_t)\omega
		=
		\int_{\Sigma_g}
		\iota_{X_t}\omega\wedge h_i.
		\tag{S}
		\]
		Combining (B), (T), and (S), we obtain
		\[
		\begin{aligned}
			\frac1{\operatorname{Vol}(\Sigma_g)}
			\int_D
			\left(
			\widetilde{\mathcal A}_i(\widetilde\phi(\tilde x))
			-
			\widetilde{\mathcal A}_i(\tilde x)
			\right)
			d\operatorname{Vol}
			=
			\frac1{\operatorname{Vol}(\Sigma_g)}
			\int_0^1
			\left(
			\int_{\Sigma_g}
			\iota_{X_t}\omega\wedge h_i
			\right)dt.
		\end{aligned}
		\tag{K}
		\]
		
		The right-hand side is exactly the harmonic coordinate of the cohomology class
		$\left[\int_0^1\iota_{X_t}\omega\,dt\right] = \operatorname{Flux}(\Phi).$ That
		is (see \cite{Tchuaga2018}), 
		\[
		c_i(\phi) = \frac{1}{\operatorname{Vol}(\Sigma_g)} \langle
		\operatorname{Flux}(\Phi), [h_i]\rangle_{L^2}.
		\]
		
		Therefore the vector
		\[
		\widetilde{\mathbf c}(\phi)
		:=
		\frac1{\operatorname{Vol}(\Sigma_g)}
		\int_D
		\left(
		\widetilde{\mathcal A}(\widetilde\phi(\tilde x))
		-
		\widetilde{\mathcal A}(\tilde x)
		\right)
		d\operatorname{Vol}
		\]
		is precisely the harmonic-coordinate representative of the flux class
		$\operatorname{Flux}(\Phi)$ in $H^1(\Sigma_g;\mathbb R)$. Now define the
		fluctuation term by 
		\[
		\widetilde{\mathcal F}(\phi)(\tilde x)
		:=
		\widetilde{\mathcal A}(\widetilde\phi(\tilde x))
		-
		\widetilde{\mathcal A}(\tilde x)
		-
		\widetilde{\mathbf c}(\phi).
		\]
		
		By construction, $\int_D \widetilde{\mathcal F}(\phi)(\tilde x)
		\,d\operatorname{Vol} =0.$ Hence,
		\[
		\widetilde{\mathcal A}(\widetilde\phi(\tilde x))
		-
		\widetilde{\mathcal A}(\tilde x)
		=
		\widetilde{\mathbf c}(\phi)
		+
		\widetilde{\mathcal F}(\phi)(\tilde x)
		\]
		is the desired decomposition. The uniqueness follows immediately. Suppose that
		$\widetilde{\mathcal A}(\widetilde\phi(\tilde x)) - \widetilde{\mathcal
			A}(\tilde x) = c+F(\tilde x),$ with $\int_D F\,d\operatorname{Vol}=0.$
		Integrating over $D$ gives 
		\[
		c
		=
		\frac1{\operatorname{Vol}(\Sigma_g)}
		\int_D
		\left(
		\widetilde{\mathcal A}(\widetilde\phi(\tilde x))
		-
		\widetilde{\mathcal A}(\tilde x)
		\right)
		d\operatorname{Vol},
		\]
		so necessarily $c=\widetilde{\mathbf c}(\phi).$ If $\Phi'$ is another symplectic
		isotopy from $\mathrm{id}$ to $\phi$, then $\operatorname{Flux}(\Phi') -
		\operatorname{Flux}(\Phi) \in \Gamma_\omega = \{0\},$ because the concatenation
		$\Phi'*\Phi^{-1}$ is a loop in $G_\omega(\Sigma_g)$. Hence the class of
		$\widetilde{\mathbf c}(\phi)$ in the quotient is independent of the chosen
		isotopy. Thus, the Abel--Jacobi displacement splits into a constant translation
		part, and a zero-mean harmonic fluctuation. Item (3) follows immediately from
		formula (K) because the RHS is exactly $\frac1{\operatorname{Vol}(\Sigma_g)}
		\langle Flux(\Phi), [h_i]\rangle_{L^2}$ \cite{Tchuaga2018}, and $
		Flux(\Phi) = 0$ provided $\Phi$ is Hamiltonian. This does not depend on the
		choice of any isotopy $\Phi$ from the identity to $\phi$ as $\Gamma_\omega =
		\{0\}$.
		
		For (5): Let $\Phi=\{\phi_t\}_{t\in[0,1]}$ and $\Psi=\{\psi_t\}_{t\in[0,1]}$ be
		symplectic isotopies joining the identity to $\phi$ and $\psi$, generated by the
		time-dependent vector fields $X_t$ and $Y_t$, respectively. Define the
		composition isotopy $\Xi=\{\xi_t\}_{t\in[0,1]}$, where $\xi_t=\phi_t\circ\psi_t.$
		Then, $\Xi$ joins $\mathrm{id}$ to $\phi\circ\psi$. Differentiating
		$\xi_t=\phi_t\circ\psi_t$ gives
		\[
		\frac{d}{dt}\xi_t
		=
		\frac{d}{dt}\phi_t\circ\psi_t
		+
		D\phi_t(\psi_t)
		\frac{d}{dt}\psi_t.
		\]
		Since
		\[
		\frac{d}{dt}\phi_t
		=
		X_t\circ\phi_t,
		\qquad
		\frac{d}{dt}\psi_t
		=
		Y_t\circ\psi_t,
		\]
		we obtain
		\[
		\frac{d}{dt}\xi_t
		=
		X_t\circ\xi_t
		+
		(\phi_t)_*Y_t\circ\xi_t.
		\]
		Hence, the generating vector field of $\Xi$ is $Z_t = X_t+(\phi_t)_*Y_t.$
		Therefore, $\operatorname{Flux}(\Xi) = \operatorname{Flux}(\Phi) +
		\operatorname{Flux}(\Psi)$ \cite{Banyaga1997}. By formula (K), the vector
		$\widetilde{\mathbf c}(\eta)$ is exactly the harmonic-coordinate representative
		of the flux class of any symplectomorphism $\eta\in G_\omega(\Sigma_g)$. Since
		the flux group $\Gamma_\omega=\{0\}$ for closed surfaces of genus $g\ge2$, this
		representative is unique. Hence,
		\[
		\widetilde{\mathbf c}(\phi\circ\psi)
		=
		\widetilde{\mathbf c}(\phi)
		+
		\widetilde{\mathbf c}(\psi),
		\]
		which proves the homomorphism property.
	\end{proof}
	\begin{figure}[htbp]
		\centering
		\begin{tikzpicture}[scale=1.2, >=Stealth]
			\coordinate (O) at (0,0);
			\coordinate (Ax) at (1,1);
			\coordinate (Aphi) at (5,3);
			\coordinate (Mid) at (4,1.5);
			
			\filldraw (Ax) circle (2pt) node[below left] {$\mathcal{A}(x)$};
			\filldraw (Aphi) circle (2pt) node[above right] {$\mathcal{A}(\phi(x))$};
			
			\draw[->, ultra thick, gray!50] (Ax) -- (Aphi) node[midway, above, sloped, black] {Total Displacement};
			
			\draw[->, thick, vertcol] (Ax) -- (Mid) node[midway, below] {$\mathbf{c}(\phi)$ (Global Flux)};
			
			\draw[->, thick, horizcol] (Mid) -- (Aphi) node[midway, right] {$\mathcal{F}(\phi)(x)$ (Fluctuation)};
			
			\node[align=center, font=\small] at (3,-0.5) {$\mathcal{A}(\phi(x)) - \mathcal{A}(x) = \mathbf{c}(\phi) + \mathcal{F}(\phi)(x)$};
			
			\draw[step=1, gray!20, thin] (-0.5,-0.5) grid (6,4);
		\end{tikzpicture}
		\caption{Decomposition of the Abel--Jacobi displacement into the constant Flux vector and the zero-average Harmonic Fluctuation.}
	\end{figure}
	
	\begin{remark}[Geometric interpretation of the decomposition]
		The decomposition of Lemma~\ref{lem:AJ_decomposition}
		\[
		\widetilde{\mathcal A}\circ\widetilde\phi
		-
		\widetilde{\mathcal A}
		=
		\widetilde{\mathbf c}(\phi)
		+
		\widetilde{\mathcal F}(\phi),
		\]
		separates the action of a symplectomorphism on the universal Jacobian into two
		geometrically distinct contributions. The vector $\widetilde{\mathbf c}(\phi)\in
		H^1(\Sigma_g;\mathbb R)$ is a flux-like invariant naturally associated with the
		Abel--Jacobi displacement of $\phi$. It measures the average translation of the
		lifted Abel--Jacobi map and is independent of the choice of lift and of the
		symplectic isotopy joining the identity to $\phi$.
		
		In contrast, $\widetilde{\mathcal F}(\phi)$ has zero average and measures the
		residual pointwise deformation after the global translation has been removed.
		Thus, every symplectomorphism admits a canonical decomposition into a rigid
		translation in Jacobian coordinates together with a purely oscillatory harmonic
		displacement.
	\end{remark}
	
	\begin{remark}[Philosophy of the harmonic decomposition]
		The decomposition of Lemma~\ref{lem:AJ_decomposition} plays a role analogous to
		a Hodge decomposition for symplectic isotopies. The flux component $\mathbf
		c(\phi)$ records the global cohomological displacement, whereas the harmonic
		displacement $\mathcal F(\phi)$ contains the remaining pointwise geometric
		information. The remainder of this paper is devoted to showing that many local
		and global properties of symplectomorphisms---including norm estimates, local
		coordinates, and fixed-point questions---can be expressed directly in terms of
		the harmonic displacement field $\mathcal F(\phi)$.
	\end{remark}
	
	\begin{proposition}[Characterization of lifts of the flux class]
		\label{prop:lift_characterization}
		Let $(\Sigma_g,\omega)$ be a closed oriented surface of genus $g\ge2$, and let
		\[
		\widetilde{\mathbf c}:G_\omega(\Sigma_g)\longrightarrow
		\mathbb R^{2g}
		\simeq H^1(\Sigma_g;\mathbb R),
		\]
		be the map defined above. Then the following are equivalent for $\phi\in
		G_\omega(\Sigma_g)$:
		
		\begin{enumerate}
			\item[(i)] $\widetilde{\mathbf c}(\phi)=0$.
			
			\item[(ii)] The flux of $\phi$ is zero.
			
			\item[(iii)] $\phi\in\Ham(\Sigma_g,\omega)$.
		\end{enumerate}
	\end{proposition}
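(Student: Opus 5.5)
We will prove the cycle of implications using formula (K) from the proof of Lemma~\ref{lem:AJ_decomposition} together with Banyaga's characterization $\ker\operatorname{Flux}=\Ham(M,\omega)$ and the triviality $\Gamma_\omega=\{0\}$ of the flux group for $g\ge2$.

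\textbf{(i)$\Leftrightarrow$(ii).} By (K), for any symplectic isotopy $\Phi$ from $\mathrm{id}$ to $\phi$ the $i$-th coordinate of $\widetilde{\mathbf c}(\phi)$ is
\[
\frac{1}{\operatorname{Vol}(\Sigma_g)}\int_{\Sigma_g}\beta_\Phi\wedge h_i,
\qquad \beta_\Phi=\int_0^1\iota_{X_t}\omega\,dt .
\]
The integral depends only on the cohomology class $[\beta_\Phi]=\operatorname{Flux}(\Phi)$, because $h_i$ is closed and $\int d f\wedge h_i=\int d(f h_i)=0$. So $\widetilde{\mathbf c}(\phi)$ is the image of $\operatorname{Flux}(\Phi)$ under the linear map
\[
L:H^1(\Sigma_g;\mathbb R)\to\mathbb R^{2g},\qquad [\beta]\mapsto\Big(\tfrac{1}{\operatorname{Vol}}\int\beta\wedge h_i\Big)_i .
\]
The key step is that $L$ is injective. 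The wedge pairing $([\beta],[h])\mapsto\int\beta\wedge h$ is nondegenerate on $H^1$ by Poincar\'e duality, and the $h_i$ represent a basis of $H^1$ by Hodge theory. Hence $L$ is a linear isomorphism, and $\widetilde{\mathbf c}(\phi)=0$ if and only if $\operatorname{Flux}(\Phi)=0$.

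Because $\Gamma_\omega=\{0\}$, the flux does not depend on the choice of $\Phi$, so ``the flux of $\phi$'' is well defined. This gives (i)$\Leftrightarrow$(ii). One could instead phrase $L$ through the Hodge star, as in $\int\beta\wedge h_i=\pm\langle\beta,\star h_i\rangle_{L^2}$, but this is unnecessary, since Poincar\'e duality alone gives injectivity.

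\textbf{(ii)$\Leftrightarrow$(iii).} This is Banyaga's theorem $\ker\operatorname{Flux}=\Ham$, recalled in the preliminaries. Here $\operatorname{Flux}:G_\omega(\Sigma_g)\to H^1/\Gamma_\omega=H^1$.
\begin{itemize}
\item[(iii)$\Rightarrow$(ii).] Choose a Hamiltonian isotopy. Each $\iota_{X_t}\omega$ is exact, so the flux vanishes.
\item[(ii)$\Rightarrow$(iii).] This is the nontrivial direction of Banyaga's theorem: a symplectic isotopy with zero flux can be deformed, with endpoints fixed, to a Hamiltonian one. We cite it rather than reprove it.
\end{itemize}
Item (6) of Lemma~\ref{lem:AJ_decomposition} already records (iii)$\Rightarrow$(i), which is a useful consistency check.

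\textbf{Main obstacle.} The only step needing care is the injectivity of $L$. Formula (K) writes $c_i$ with the notation $\langle\operatorname{Flux}(\Phi),[h_i]\rangle_{L^2}$, but the actual pairing in (K) is the wedge pairing, not the $L^2$ inner product. I will therefore state the identification explicitly as the Poincar\'e-duality (symplectic intersection) pairing and argue its nondegeneracy directly. Everything else reduces to results already quoted in the paper.
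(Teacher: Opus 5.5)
Your proof is correct and follows essentially the paper's route: formula (K) identifies $\widetilde{\mathbf c}(\phi)$ with a linear image of $\operatorname{Flux}(\Phi)$, and Banyaga's theorem together with $\Gamma_\omega=\{0\}$ gives (ii)$\Leftrightarrow$(iii). Your Poincar\'e-duality argument for injectivity is a cleaner justification than the paper's description of the components as $L^2$-projections onto $\{h_i\}$ (the wedge pairing in (K) is, up to sign, the $L^2$ pairing with the orthonormal basis $\{\star h_i\}$), and it gives (i)$\Leftrightarrow$(ii) directly instead of closing the cycle through Lemma~\ref{lem:AJ_decomposition}.
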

	
	\begin{proof}
		The implication $(i)\Longrightarrow(ii)$ is immediate since $\widetilde{\mathbf
			c}(\phi)$ is, by construction, a vector whose components are projections of the
		symplectic flux of any symplectic isotopy from the identity to $\phi$ onto the
		$L^2$-orthonormal basis $\{h_1,\dots,h_{2g}\}$. Since the flux group of
		$\Sigma_g$ is trivial for $g\ge2$, Banyaga's flux exact sequence becomes
		\[
		0
		\longrightarrow
		\Ham(\Sigma_g,\omega)
		\longrightarrow
		G_\omega(\Sigma_g)
		\xrightarrow{\operatorname{Flux}}
		H^1(\Sigma_g;\mathbb R)
		\longrightarrow
		0.
		\]
		Therefore, $(ii)\Longleftrightarrow(iii).$ Finally, if $\phi\in\Ham(\Sigma_g,\omega)$,
		then the flux class of $\phi$ vanishes. The implication $(iii)\Longrightarrow(i)$
		follows from Lemma \ref{lem:AJ_decomposition}-(3).
	\end{proof}
	
	\section{The Harmonic Flux Norm and its Properties}
	
	The decomposition of Lemma~\ref{lem:AJ_decomposition} naturally suggests
	measuring only the oscillatory component of the Abel--Jacobi displacement.
	
	\begin{proposition}\label{DECK}
		For every $\phi\in G_\omega(\Sigma_g)$, the function
		\[
		\widetilde{\mathcal F}(\phi): \widetilde{\Sigma}_g\longrightarrow\mathbb R^{2g},
		\]
		is invariant under the action of the deck transformation group $\Gamma$.
		Consequently, there exists a unique continuous map
		\[
		\mathcal F(\phi):\Sigma_g\longrightarrow\mathbb R^{2g},
		\]
		such that $\widetilde{\mathcal F}(\phi) = \mathcal F(\phi)\circ\pi.$ 
	\end{proposition}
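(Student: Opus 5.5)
The plan is to show that the displacement $\widetilde{\mathcal A}\circ\widetilde\phi-\widetilde{\mathcal A}$ is itself deck-invariant; then $\widetilde{\mathcal F}(\phi)$, which differs from it only by the constant $\widetilde{\mathbf c}(\phi)$, inherits the invariance. Throughout, $\widetilde\phi$ is the lift obtained by lifting an isotopy $\Phi=\{\phi_t\}$ from the identity, as in the proof of Lemma~\ref{lem:AJ_decomposition}. For $\gamma\in\Gamma$ (identified with $\pi_1(\Sigma_g)$ acting by deck transformations), the lifted isotopy $\widetilde\phi_t$ commutes with $\gamma$. Indeed, $t\mapsto\gamma\widetilde\phi_t\gamma^{-1}$ and $t\mapsto\widetilde\phi_t$ are both lifts of $\phi_t$ starting at the identity, so they agree by uniqueness of path lifting in the space of lifts; the set of $t$ where they coincide is open, closed and contains $0$. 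Hence $\widetilde\phi\circ\gamma=\gamma\circ\widetilde\phi$.

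Next I use the equivariance of the lifted Abel--Jacobi map: $\widetilde{\mathcal A}(\gamma\tilde x)=\widetilde{\mathcal A}(\tilde x)+p(\gamma)$. Here $p(\gamma)=(\int_\gamma h_1,\dots,\int_\gamma h_{2g})\in\Gamma$ is the period vector, which follows because $d\widetilde{\mathcal A}_i=\pi^*h_i$ and integrating along a path from $\tilde x$ to $\gamma\tilde x$ gives the period over the loop class $\gamma$, independently of $\tilde x$. Combining the two facts,
\[
\widetilde{\mathcal A}(\widetilde\phi(\gamma\tilde x))-\widetilde{\mathcal A}(\gamma\tilde x)
=\widetilde{\mathcal A}(\gamma\widetilde\phi(\tilde x))-\widetilde{\mathcal A}(\gamma\tilde x)
=\widetilde{\mathcal A}(\widetilde\phi(\tilde x))+p(\gamma)-\widetilde{\mathcal A}(\tilde x)-p(\gamma),
\]
and the period terms cancel. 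An alternative check is formula (A): the displacement equals $\int_0^1 h_i(X_t)(\phi_t(x))\,dt$ with $x=\pi(\tilde x)$, which visibly depends only on $\pi(\tilde x)$. I would mention this as a second, more direct argument.

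Subtracting the constant $\widetilde{\mathbf c}(\phi)$ preserves invariance, so $\widetilde{\mathcal F}(\phi)\circ\gamma=\widetilde{\mathcal F}(\phi)$. Since $\pi$ is a covering map whose fibers are exactly the $\Gamma$-orbits, $\mathcal F(\phi)(x):=\widetilde{\mathcal F}(\phi)(\tilde x)$ for any $\tilde x\in\pi^{-1}(x)$ is well defined. It is continuous because $\pi$ is a local homeomorphism, so locally $\mathcal F(\phi)=\widetilde{\mathcal F}(\phi)\circ s$ for a continuous local section $s$. Uniqueness is immediate from surjectivity of $\pi$.

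The main obstacle is the lift issue. If an arbitrary lift $\widetilde\phi$ is allowed, it generally satisfies only $\widetilde\phi\circ\gamma=\phi_*(\gamma)\circ\widetilde\phi$ for an automorphism $\phi_*$ of $\Gamma$, and invariance can fail by $p(\phi_*\gamma)-p(\gamma)$. I will handle this by fixing the isotopy-lift convention, using the fact that $\phi$ is isotopic to the identity. Any other such lift is $\delta\circ\widetilde\phi$ with $\delta\in\Gamma$, and it changes the displacement only by the constant $p(\delta)$. By item (2) of Lemma~\ref{lem:AJ_decomposition}, that constant is absorbed consistently, so $\widetilde{\mathcal F}(\phi)$ is unaffected; I would state this caveat explicitly.
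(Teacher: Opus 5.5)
Your proof is correct and follows the same route as the paper. You use the commutation $\widetilde\phi\circ\gamma=\gamma\circ\widetilde\phi$, the period equivariance of $\widetilde{\mathcal A}$, cancellation of the period terms, and descent through the covering, and you justify the commutation properly via the isotopy lift. The paper instead just asserts that every lift commutes with all deck transformations, which is false for $\delta\circ\widetilde\phi$ with $\delta\neq 1$ because $\pi_1(\Sigma_g)$ has trivial center.

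One correction to your closing caveat: for $\phi\in G_\omega(\Sigma_g)$ invariance never fails. Every lift has the form $\delta\circ\widetilde\phi$, and its displacement differs from yours only by the constant $p(\delta)$. What leaves $\widetilde{\mathcal F}(\phi)$ unchanged is the averaging definition of $\widetilde{\mathbf c}(\phi)$, which shifts by the same $p(\delta)$. It is not item~(2) of Lemma~\ref{lem:AJ_decomposition}, which would say $\widetilde{\mathbf c}$ does not shift at all.
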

	
	\begin{proof}
		Recall that
		\[
		\widetilde{\mathcal F}(\phi)(\tilde x)
		=
		\widetilde{\mathcal A}(\widetilde\phi(\tilde x))
		-
		\widetilde{\mathcal A}(\tilde x)
		-
		\widetilde{\mathbf c}(\phi).
		\]
		
		Let $\gamma$ be a deck transformation of the universal covering $\pi:\widetilde{\Sigma}_g\longrightarrow\Sigma_g.$ Since $\widetilde\phi$ is a lift of the map $\phi$, it commutes with every deck transformation: $\widetilde\phi\circ\gamma = \gamma\circ\widetilde\phi.$ Moreover, the lifted Abel--Jacobi map satisfies the equivariance relation $\widetilde{\mathcal A}(\gamma\tilde x) = \widetilde{\mathcal A}(\tilde x) + \gamma,$ where we identify the deck transformation $\gamma$ with the corresponding period vector in the lattice $\Gamma\subset\mathbb R^{2g}$. Therefore,
		\[
		\begin{aligned}
			\widetilde{\mathcal F}(\phi)(\gamma\tilde x)
			&=
			\widetilde{\mathcal A}
			\bigl(
			\widetilde\phi(\gamma\tilde x)
			\bigr)
			-
			\widetilde{\mathcal A}(\gamma\tilde x)
			-
			\widetilde{\mathbf c}(\phi)\\
			&=
			\widetilde{\mathcal A}
			\bigl(
			\gamma\widetilde\phi(\tilde x)
			\bigr)
			-
			\widetilde{\mathcal A}(\gamma\tilde x)
			-
			\widetilde{\mathbf c}(\phi)\\
			&=
			\left(
			\widetilde{\mathcal A}(\widetilde\phi(\tilde x))
			+\gamma
			\right)
			-
			\left(
			\widetilde{\mathcal A}(\tilde x)
			+\gamma
			\right)
			-
			\widetilde{\mathbf c}(\phi)\\
			&=
			\widetilde{\mathcal A}(\widetilde\phi(\tilde x))
			-
			\widetilde{\mathcal A}(\tilde x)
			-
			\widetilde{\mathbf c}(\phi)\\
			&=
			\widetilde{\mathcal F}(\phi)(\tilde x).
		\end{aligned}
		\]
		
		Hence $\widetilde{\mathcal F}(\phi)$ is invariant under every deck
		transformation. A continuous function on the universal cover that is invariant
		under the deck transformation group descends uniquely to the quotient. Therefore
		there exists a unique continuous map $\mathcal F(\phi):\Sigma_g\longrightarrow
		\mathbb R^{2g},$ satisfying $\widetilde{\mathcal F}(\phi) = \mathcal
		F(\phi)\circ\pi.$ Equivalently, $\mathcal F(\phi)(x) = \widetilde{\mathcal
			F}(\phi)(\tilde x),$ where $\tilde x$ is any lift of $x$.
	\end{proof}
	
	Proposition \ref{prop:lift_characterization} motivated the following definition:
	\begin{equation}
		\|\phi\|_{\mathrm{harm}} := \sup_{x \in \Sigma_g} \|\mathcal{F}(\phi)(x)\|_{\mathbb{R}^{2g}}.
	\end{equation}
	
	\begin{remark}[Equivalence of definitions]
		\label{rem:norm_equivalence}
		Because the components of $\mathcal{F}(\phi)$ are exactly the evaluations
		$\Delta(\phi, h_i)_x$ for the orthonormal basis $\{h_i\}$, the Euclidean norm of
		the vector $\mathcal{F}(\phi)(x)$ equals the supremum of $|\Delta(\phi,\alpha)_x|$
		over all unit-norm harmonic forms $\alpha$. Hence this definition coincides with
		the operator norm formulation given in the Introduction.
	\end{remark}
	
	\begin{proposition}[First-order expansion of the harmonic norm]
		\label{prop:first_order_harmonic_norm}
		Let $(\Sigma_g,\omega)$ be a closed symplectic surface of genus $g\ge2$, and let
		$\phi_\varepsilon=\exp(\varepsilon X_H)$ for $0<\varepsilon\ll1$ be the
		time-$\varepsilon$ map of the Hamiltonian vector field $X_H$ generated by a
		smooth Hamiltonian $H\in C^\infty(\Sigma_g)$. Then, the fluctuation vector
		$\mathcal F(\phi_\varepsilon): \Sigma_g \to \mathbb{R}^{2g}$ admits the uniform
		first-order expansion
		\[
		\mathcal F(\phi_\varepsilon)(x)
		=
		\varepsilon \big( h_1(X_H)(x), \dots, h_{2g}(X_H)(x) \big)
		+
		O(\varepsilon^2).
		\]
		Equivalently, for every harmonic $1$-form $h$, the scalar component
		$\mathcal{F}_h(\phi_\varepsilon)$ satisfies
		\[
		\mathcal F_h(\phi_\varepsilon)
		=
		\varepsilon\,h(X_H)
		+
		O(\varepsilon^2)
		\]
		in the $C^\infty$ topology. Consequently,
		\[
		\|\phi_\varepsilon\|_{\mathrm{harm}}
		=
		\varepsilon
		\sup_{\|h\|_{L^2}=1}
		\|h(X_H)\|_{C^0}
		+
		O(\varepsilon^2).
		\]
		
		In particular, if $H$ is nonconstant, then $
		\|\phi_\varepsilon\|_{\mathrm{harm}}>0,$ 
		for every sufficiently small $\varepsilon>0$.
	\end{proposition}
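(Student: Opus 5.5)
We take the isotopy $\Phi=\{\phi_t\}_{t\in[0,\varepsilon]}$, $\phi_t=\exp(tX_H)$, which is Hamiltonian. By Lemma~\ref{lem:AJ_decomposition}-(6), $\widetilde{\mathbf c}(\phi_\varepsilon)=0$. The fluctuation therefore equals the full Abel--Jacobi displacement, and formula (A) from the proof of Lemma~\ref{lem:AJ_decomposition} (reparametrized to $[0,\varepsilon]$) gives, for each $i$,
\[
\mathcal F_i(\phi_\varepsilon)(x)=\int_0^\varepsilon h_i(X_H)(\phi_t(x))\,dt .
\]
As a consistency check, $\int_{\Sigma_g} h_i(X_H)\,\omega=\int_{\Sigma_g} dH\wedge h_i$ up to sign, by identity (S). This integral vanishes because $dH\wedge h_i=d(Hh_i)$ and $h_i$ is closed, so the right-hand side indeed has zero mean, as Proposition~\ref{DECK} and the lemma require.

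Next we expand the integrand. The function $f_i:=h_i(X_H)$ is smooth on the compact surface. The flow $(t,x)\mapsto\phi_t(x)$ is smooth, so $f_i(\phi_t(x))=f_i(x)+t\,R_i(t,x)$ with $R_i$ smooth and bounded, together with all its $x$-derivatives, uniformly for $|t|\le1$; concretely $R_i(t,x)=\int_0^1 (X_Hf_i)(\phi_{st}(x))\,ds$. Integrating over $[0,\varepsilon]$ gives
\[
\mathcal F_i(\phi_\varepsilon)=\varepsilon f_i+\varepsilon^2\rho_{i,\varepsilon},
\]
where $\rho_{i,\varepsilon}$ is bounded in every $C^k$ norm independently of $\varepsilon$. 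This is the $C^\infty$ first-order expansion, and the vector version follows since there are finitely many components. The statement for a general harmonic $h=\sum a_ih_i$ follows by linearity in $h$, using that $\mathcal F_h=\sum a_i\mathcal F_i$.

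For the norm, note that for a vector $v\in\mathbb R^{2g}$ one has $|v|=\sup_{|a|=1}\langle a,v\rangle$, and $h=\sum a_ih_i$ has $\|h\|_{L^2}=|a|$. Hence
\[
\sup_x|\mathcal F(\phi_\varepsilon)(x)|=\sup_{\|h\|_{L^2}=1}\|\mathcal F_h(\phi_\varepsilon)\|_{C^0}.
\]
We apply the triangle inequality with the uniform remainder bound $\sup_{\|h\|=1}\|\varepsilon^2\rho_{h,\varepsilon}\|_{C^0}\le C\varepsilon^2$. The constant $C$ is uniform because the unit sphere is compact and $\rho$ depends linearly on $a$. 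This yields $\|\phi_\varepsilon\|_{\mathrm{harm}}=\varepsilon N(H)+O(\varepsilon^2)$ with $N(H):=\sup_{\|h\|=1}\|h(X_H)\|_{C^0}$.

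The remaining and main point is positivity: we must show $N(H)>0$ whenever $H$ is nonconstant. Then $\varepsilon N(H)-C\varepsilon^2>0$ for small $\varepsilon$. If $H$ is nonconstant, there is a point $x$ with $dH_x\ne0$, hence $X_H(x)\ne0$. We need some harmonic $h$ with $h_x(X_H(x))\neq0$, that is, the evaluation map $\mathcal H^1\to T_x^*\Sigma_g$ must not annihilate the nonzero vector $X_H(x)$. For this we invoke the fact recalled in the preliminaries that $\mathcal A$ is an embedding, in particular an immersion. Its differential at $x$ is $v\mapsto(h_1(v),\dots,h_{2g}(v))$, which is injective, so some $h_i(X_H(x))\ne0$. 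This step relies on the immersion property of $\mathcal A$ (equivalently, the absence of common zeros together with the holomorphic/antiholomorphic pairing of harmonic forms giving full rank at each point), and it is where I would be most careful.
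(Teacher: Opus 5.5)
Your proof is correct. The positivity step matches the paper's, but you reach the expansion by a different route.

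The paper argues at the level of forms. It expands $\phi_\varepsilon^*h=h+\varepsilon L_{X_H}h+O(\varepsilon^2)$ and uses Cartan's formula with $dh=0$ to get $L_{X_H}h=d(h(X_H))$. It then identifies $\mathcal F_h(\phi_\varepsilon)$ as the unique zero-mean primitive of $\phi_\varepsilon^*h-h$. There, the vanishing of the average of $h(X_H)$ is essential, because it is what pins down the primitive.

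You argue at the level of functions. You use the orbit-integral formula (A) for the lift of the isotopy and a first-order Taylor expansion of $h_i(X_H)\circ\phi_t$ in $t$. This buys an explicit remainder, $\rho_{i,\varepsilon}(x)=\int_0^1u\,R_i(\varepsilon u,x)\,du$. The $C^k$ bounds for every $k$, and hence the $C^\infty$ claim and the uniformity over the unit sphere of $\mathcal H^1(\Sigma_g)$, are then immediate. It also avoids a step the paper leaves implicit: that a zero-mean primitive of an exact form of size $O(\varepsilon^2)$ is itself $O(\varepsilon^2)$ in every $C^k$ norm.

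Your ``consistency check'' is in fact a proof. Together with $\phi_t^*\omega=\omega$ (step (T) of the lemma), it shows directly that the displacement of the isotopy lift has zero average. So you do not need Lemma~\ref{lem:AJ_decomposition}-(6). And since $\widetilde{\mathcal F}$ is the displacement minus its mean, the choice of lift does not affect $\mathcal F$ at all. The paper's route is more intrinsic: it needs neither lifts nor orbit integrals, only the characterization of $\mathcal F_h$ as a normalized primitive.

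For positivity, your formulation is exactly what is needed: the evaluation map $\mathcal H^1(\Sigma_g)\to T_x^*\Sigma_g$ must be surjective, equivalently $d\mathcal A_x$ is injective. This holds because there is a holomorphic $1$-form $\theta$ with $\theta(x)\neq 0$, since the canonical system has no base points. The harmonic forms $\mathrm{Re}\,\theta$ and $\mathrm{Im}\,\theta$ then span $T_x^*\Sigma_g$.
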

	
	\begin{proof}
		Since $\phi_\varepsilon$ is generated by the Hamiltonian vector field $X_H$, its
		pullback on differential forms admits the expansion $\phi_\varepsilon^*h = h +
		\varepsilon L_{X_H}h + O(\varepsilon^2),$ where $L_{X_H}$ denotes the Lie
		derivative. Because $h$ is harmonic, it is closed, so Cartan's formula gives
		$L_{X_H}h = d(\iota_{X_H}h) + \iota_{X_H}(dh) = d(h(X_H)).$ Hence,
		$\phi_\varepsilon^*h-h = d(\varepsilon\,h(X_H)) + O(\varepsilon^2).$ By
		definition, the harmonic primitive $\mathcal F_h(\phi_\varepsilon)$ is the
		unique zero-average function satisfying $d\mathcal F_h(\phi_\varepsilon) =
		\phi_\varepsilon^*h-h.$
		
		Notice that the function $h(X_H)$ already has zero average with respect to the
		symplectic volume form $\omega$. Indeed, since $h \wedge \omega$ is a $3$-form
		on a surface, it vanishes, and therefore
		\[
		0 = \iota_{X_H}(h \wedge \omega) = (\iota_{X_H}h)\omega - h \wedge
		(\iota_{X_H}\omega) = h(X_H)\omega - h \wedge dH.
		\]
		Because $h$ is closed, $h \wedge dH = -d(Hh)$, which is exact. By Stokes'
		theorem, its integral over $\Sigma_g$ vanishes. Since both $\mathcal
		F_h(\phi_\varepsilon)$ and $h(X_H)$ have zero average, we conclude that
		\[
		\mathcal F_h(\phi_\varepsilon)
		=
		\varepsilon\,h(X_H)
		+
		O(\varepsilon^2).
		\]
		
		Because the space of harmonic $1$-forms of unit $L^2$-norm is a compact,
		finite-dimensional sphere, the $O(\varepsilon^2)$ error term is uniform in $h$.
		Taking the supremum over all such harmonic forms yields
		\[
		\|\phi_\varepsilon\|_{\mathrm{harm}}
		=
		\varepsilon
		\sup_{\|h\|_{L^2}=1}
		\|h(X_H)\|_{C^0}
		+
		O(\varepsilon^2).
		\]
		
		Finally, suppose that $\sup_{\|h\|_{L^2}=1}\|h(X_H)\|_{C^0}=0.$ Then every
		harmonic $1$-form vanishes on $X_H$. However, for surfaces of genus $g \ge 2$, a
		fundamental theorem (tied to the Abel--Jacobi map being an embedding) states
		that harmonic $1$-forms have no common zeros. More precisely, they separate
		tangent vectors: at any point $p$, if we have a non-zero vector $v \in
		T_p\Sigma_g$, there is at least one harmonic $1$-form $h$ such that $h(v) \neq
		0$. Since $H$ is nonconstant, $X_H$ is non-zero at some point $p$, so there
		exists some unit-norm harmonic form $h$ such that $h(X_H)(p) \neq 0$. This
		contradicts the assumption. Hence, 
		\[
		C := \sup_{\|h\|_{L^2}=1}\|h(X_H)\|_{C^0}>0.
		\]
		Because $\|\phi_\varepsilon\|_{\mathrm{harm}} = C\varepsilon + O(\varepsilon^2)$,
		the strictly positive linear term $C\varepsilon$ dominates the $O(\varepsilon^2)$
		error for small $\varepsilon$. This proves that $\|\phi_\varepsilon\|_{\mathrm{harm}}>0$
		for all sufficiently small $\varepsilon>0$.
	\end{proof}
	
	\begin{example}[A localized Hamiltonian twist on a genus-two surface]
		\label{ex:genus2_twist}
		To illustrate the geometric meaning of the harmonic norm, we consider a
		localized Hamiltonian flow on a closed surface $(\Sigma_2,\omega)$ of genus two.
		Choose an embedded cylindrical neighborhood $C\simeq S^1\times[-w,w],$ with
		coordinates $(\theta,y)\in\mathbb R/\mathbb Z\times[-w,w],$ and symplectic form
		$\omega=d\theta\wedge dy.$ Geometrically, one may regard $C$ as a narrow neck
		joining the two handles of $\Sigma_2$. Let
		\[
		H(\theta,y)=
		\begin{cases}
			\cos^2\!\bigl(\tfrac{\pi y}{2w}\bigr), & |y|\le w,\\[4pt]
			0, & \text{outside }C,
		\end{cases}
		\]
		smoothed near the boundary so that $H\in C^\infty(\Sigma_2)$. Since
		$\iota_{X_H}\omega=dH,$ the Hamiltonian vector field is
		\[
		X_H
		=
		H'(y)\frac{\partial}{\partial\theta}
		=
		-\frac{\pi}{2w}
		\sin\!\left(\frac{\pi y}{w}\right)
		\frac{\partial}{\partial\theta}.
		\]
		
		Thus the Hamiltonian flow generates a shear (or twist) along the neck. Its speed
		vanishes at $y=0,$ and $ y=\pm w,$ and reaches its maximum magnitude along the
		circles $y=\pm\frac{w}{2}.$ Let $\phi_\varepsilon = \exp(\varepsilon X_H)$
		denote the time-$\varepsilon$ map. By
		Proposition~\ref{prop:first_order_harmonic_norm},
		\[
		\|\phi_\varepsilon\|_{\mathrm{harm}}
		=
		\varepsilon
		\sup_{\|h\|_{L^2}=1}
		\|h(X_H)\|_{C^0}
		+
		O(\varepsilon^2).
		\]
		Assume now that the Bergman metric associated with the harmonic $1$-forms has
		been introduced. Since the Bergman norm is characterized by $\|v\|_B =
		\sup_{\|h\|_{L^2}=1} |h(v)|,$ we obtain
		\[
		\|\phi_\varepsilon\|_{\mathrm{harm}}
		=
		\varepsilon
		\sup_{p\in\Sigma_2}
		\|X_H(p)\|_B
		+
		O(\varepsilon^2).
		\]
		
		Substituting the expression for $X_H$ gives
		\[
		\|\phi_\varepsilon\|_{\mathrm{harm}}
		=
		\varepsilon
		\frac{\pi}{2w}
		\sup_{|y|\le w}
		\left(
		\left|
		\sin\!\left(\frac{\pi y}{w}\right)
		\right|
		\left\|
		\frac{\partial}{\partial\theta}
		\right\|_B
		\right)
		+
		O(\varepsilon^2).
		\]
		
		If the neck is sufficiently thin and nearly rotationally symmetric, the Bergman
		length of $\partial_\theta$ varies only slightly along the cylinder. Writing
		$B_{\mathrm{neck}} = \sup_{|y|\le w} \left\|\frac{\partial}{\partial\theta}
		\right\|_B,$ the preceding formula simplifies to 
		\[
		\|\phi_\varepsilon\|_{\mathrm{harm}} = \varepsilon \frac{\pi}{2w} B_{\mathrm{neck}}
		+ O(\varepsilon^2).
		\]
		This expression exhibits a natural separation between two geometric
		contributions. The factor $\frac{\pi}{2w}$ depends only on the chosen
		Hamiltonian and measures the maximal shear generated by the flow, while the
		factor $B_{\mathrm{neck}}$ depends only on the global geometry of the Riemann
		surface through the Bergman metric induced by harmonic $1$-forms. It measures
		how strongly the harmonic geometry detects motion along the neck.
		
		Consequently, the first-order harmonic norm separates the local dynamics of the
		Hamiltonian flow from the global Hodge geometry of the underlying surface. This
		illustrates the philosophy developed throughout the paper: the harmonic norm
		simultaneously captures dynamical information and the geometry encoded by the
		Abel-Jacobi embedding.
	\end{example}

	\begin{remark}[Generalizations]
		The construction in Example~\ref{ex:genus2_twist} extends naturally to:
		\begin{enumerate}
			\item Surfaces of arbitrary genus $g \ge 2$ by placing the 
			Hamiltonian support in any cylindrical neck region,
			\item Multiple simultaneous twists on different necks, yielding 
			non-trivial interactions between local dynamics,
			\item Time-dependent Hamiltonians that create more complex shear 
			patterns.
		\end{enumerate}
		In all cases, the first-order formula continues to separate dynamical 
		and geometric contributions, providing explicit computability.
	\end{remark}

	\begin{proposition}[Kernel of the Harmonic Norm]
		\label{prop:kernel}
		Let $\phi\in G_\omega(\Sigma_g)$ and let
		\[
		\widetilde{\mathcal A}(\widetilde\phi(\tilde x))
		-
		\widetilde{\mathcal A}(\tilde x)
		=
		\widetilde{\mathbf c}
		+
		\widetilde{\mathcal F}(\phi)(\tilde x),
		\]
		be the decomposition of Lemma~\ref{lem:AJ_decomposition}, where
		$\widetilde{\mathbf c}\in H^1(\Sigma_g;\mathbb R)$ is a representative of the
		flux class and $\widetilde{\mathcal F}(\phi)$ has zero mean. Then the following
		are equivalent:
		
		\begin{enumerate}
			\item $\|\phi\|_{\mathrm{harm}}=0$;
			
			\item
			\[
			\widetilde{\mathcal A}(\widetilde\phi(\tilde x))
			-
			\widetilde{\mathcal A}(\tilde x)
			=
			\widetilde{\mathbf c}
			\qquad
			\forall\,\tilde x\in\widetilde{\Sigma}_g;
			\]
			
			\item
			\[
			\phi^*\alpha=\alpha
			\qquad
			\forall\,\alpha\in\mathcal H^1(\Sigma_g).
			\]
		\end{enumerate}
	\end{proposition}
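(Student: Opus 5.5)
The plan is to prove $(1)\Leftrightarrow(2)$ directly from the definitions, and then $(2)\Leftrightarrow(3)$ by differentiating, respectively integrating, the Abel--Jacobi displacement along paths in $\widetilde\Sigma_g$.

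\emph{Step 1: $(1)\Leftrightarrow(2)$.} By definition, $\|\phi\|_{\mathrm{harm}}=\sup_x\|\mathcal F(\phi)(x)\|$. So $\|\phi\|_{\mathrm{harm}}=0$ holds if and only if $\mathcal F(\phi)\equiv0$ on $\Sigma_g$. By Proposition~\ref{DECK} this is equivalent to $\widetilde{\mathcal F}(\phi)=\mathcal F(\phi)\circ\pi\equiv0$ on $\widetilde\Sigma_g$. Inserting this into the decomposition of Lemma~\ref{lem:AJ_decomposition} gives exactly (2). Conversely, if (2) holds, then subtracting it from the decomposition gives $\widetilde{\mathcal F}(\phi)\equiv0$. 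This step is purely formal.

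\emph{Step 2: $(2)\Rightarrow(3)$.} Differentiate the identity in (2) with respect to $\tilde x$. The $i$-th component of $\widetilde{\mathcal A}$ is a primitive of $\pi^*h_i$, so the derivative of $\widetilde{\mathcal A}_i\circ\widetilde\phi$ is $\widetilde\phi^*\pi^*h_i=\pi^*\phi^*h_i$, using $\pi\circ\widetilde\phi=\phi\circ\pi$. The constant $\widetilde{\mathbf c}$ differentiates to zero, so $\pi^*(\phi^*h_i-h_i)=0$. Since $\pi$ is a local diffeomorphism, this gives $\phi^*h_i=h_i$ for every $i$. Linearity in $\alpha$ then yields (3) for all $\alpha\in\mathcal H^1(\Sigma_g)$.

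\emph{Step 3: $(3)\Rightarrow(2)$.} Assume $\phi^*h_i=h_i$ for all $i$. Then the function $G_i=\widetilde{\mathcal A}_i\circ\widetilde\phi-\widetilde{\mathcal A}_i$ satisfies $dG_i=\pi^*(\phi^*h_i-h_i)=0$. Since $\widetilde\Sigma_g$ is connected, $G_i$ is constant, so the vector $G=(G_1,\dots,G_{2g})$ is a constant vector $v$. Averaging the decomposition of Lemma~\ref{lem:AJ_decomposition} over $D$ and using $\int_D\widetilde{\mathcal F}(\phi)\,d\mathrm{Vol}=0$ forces $v=\widetilde{\mathbf c}(\phi)$; equivalently, the uniqueness clause of Lemma~\ref{lem:AJ_decomposition}(1) applies. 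This gives (2).

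The only delicate point I expect is bookkeeping in Step 2. One has to justify $d(\widetilde{\mathcal A}_i\circ\widetilde\phi)=\pi^*\phi^*h_i$ for an arbitrary lift $\widetilde\phi$, rather than only for the lift obtained from the isotopy. This holds because $d\widetilde{\mathcal A}_i=\pi^*h_i$ and any lift satisfies $\pi\circ\widetilde\phi=\phi\circ\pi$. It is also consistent with Lemma~\ref{lem:AJ_decomposition}(2), which says that changing the lift does not change $\widetilde{\mathbf c}$.
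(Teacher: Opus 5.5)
Your proposal is correct and follows essentially the same route as the paper. The paper runs the cycle $(1)\Rightarrow(2)\Rightarrow(3)\Rightarrow(1)$ with exactly your ingredients: vanishing of the supremum forces $\widetilde{\mathcal F}(\phi)\equiv0$; differentiation uses $\widetilde\phi^{\,*}\pi^*=\pi^*\phi^*$ together with injectivity of $\pi^*$; and connectedness of $\widetilde\Sigma_g$ plus uniqueness of the decomposition identifies the constant as $\widetilde{\mathbf c}$. Your closing appeal to Lemma~\ref{lem:AJ_decomposition}(2) is not needed, and strictly speaking a change of lift by a deck transformation shifts $\widetilde{\mathbf c}$ by a period vector while leaving $\widetilde{\mathcal F}(\phi)$ unchanged. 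Since your argument only uses $\pi\circ\widetilde\phi=\phi\circ\pi$, it holds for any fixed lift.
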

	
	\begin{proof}
		Suppose first that $\|\phi\|_{\mathrm{harm}}=0.$ By definition, $\widetilde{\mathcal F}
		(\phi)\equiv0.$ Hence, 
		\[
		\widetilde{\mathcal A}(\widetilde\phi(\tilde x)) - \widetilde{\mathcal A}(\tilde x)
		= \widetilde{\mathbf c},
		\]
		which proves (ii). Assume now that (ii) holds. Differentiating both sides gives
		$d(\widetilde{\mathcal A}\circ\widetilde\phi) = d\widetilde{\mathcal A}.$
		Since $d\widetilde{\mathcal A} = (\pi^*h_1,\ldots,\pi^*h_{2g}),$ we obtain
		\[
		\widetilde\phi^{\,*}(\pi^*h_i)
		=
		\pi^*h_i,
		\qquad
		i=1,\ldots,2g.
		\]
		Because pullback commutes with the covering projection, $\pi^*(\phi^*h_i) =
		\pi^*h_i.$ As $\pi^*$ is injective, $\phi^*h_i=h_i,$ for $i=1,\ldots,2g.$ By
		linearity, $\phi^*\alpha=\alpha$ for every harmonic $1$-form $\alpha$, proving
		(iii). Finally, assume (iii). Then, $\phi^*h_i=h_i$ for $i=1,\ldots,2g,$ so
		$d(\widetilde{\mathcal A}\circ\widetilde\phi) = d\widetilde{\mathcal A}.$
		Therefore, $d\left(\widetilde{\mathcal A}\circ\widetilde\phi -
		\widetilde{\mathcal A}\right) =0.$ Since the universal cover
		$\widetilde{\Sigma}_g$ is connected, the function $\widetilde{\mathcal
			A}(\widetilde\phi(\tilde x)) - \widetilde{\mathcal A}(\tilde x)$ is constant.
		By the uniqueness in the decomposition of Lemma~\ref{lem:AJ_decomposition}, this
		constant is precisely $\widetilde{\mathbf c}$, so $\widetilde{\mathcal
			F}(\phi)\equiv0.$ Hence, $\|\phi\|_{\mathrm{harm}}=0.$ Thus (i), (ii), and (iii)
		are equivalent.
	\end{proof}
	
	We now prove the two metric lemmas that underpin all subsequent estimates.
	
	\begin{lemma}[Local Rigidity of the Abel--Jacobi Map]
		\label{lem:rigidity}
		Let $\Sigma_g$ be a closed, oriented surface of genus $g \ge 2$, endowed with a
		Riemannian metric $\mathsf g$. There exist constants $c_1 > 0$ and $r > 0$ such
		that, whenever $d_{\mathsf g}(x,y) < r$, their images under the Abel--Jacobi map
		are separated by at least a fixed linear multiple of their surface distance:
		\[
		\|\mathcal A(x)-\mathcal A(y)\|_{\mathbb R^{2g}} \;\ge\; c_1 \cdot
		d_{\mathsf g}(x,y).
		\]
	\end{lemma}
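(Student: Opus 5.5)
The idea is to use that $\mathcal A$ is an immersion, so its differential is uniformly injective on the compact surface. A first-order Taylor estimate in local charts then turns this into the claimed local bi-Lipschitz lower bound. We work with the lifted map $\widetilde{\mathcal A}$ on small geodesic balls, where it is single-valued. The Euclidean distance in $\mathbb R^{2g}$ then dominates (and for $r$ smaller than half the systole of the lattice equals) the flat distance on $\operatorname{Jac}(\Sigma_g)$.

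\emph{Step 1: uniform injectivity of the differential.} For $p\in\Sigma_g$ and $v\in T_p\Sigma_g$ we have $d\widetilde{\mathcal A}_p(v)=(h_1(v),\dots,h_{2g}(v))$. As recalled in the preliminaries and in the proof of Proposition~\ref{prop:first_order_harmonic_norm}, for $g\ge2$ the harmonic forms separate tangent vectors. Hence $v\mapsto \|d\mathcal A_p(v)\|$ vanishes only at $v=0$. The function $(p,v)\mapsto\|d\mathcal A_p(v)\|$ is continuous on the unit sphere bundle $S\Sigma_g$, which is compact. It therefore attains a positive minimum $2c_1>0$, giving $\|d\mathcal A_p(v)\|\ge 2c_1|v|_{\mathsf g}$ for all $p,v$.

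\emph{Step 2: Taylor estimate.} Let $r_0$ be smaller than the injectivity radius of $\mathsf g$. For $d_{\mathsf g}(x,y)=\delta<r_0$, join $x$ to $y$ by the unique minimizing geodesic $\gamma:[0,\delta]\to\Sigma_g$ with unit speed, and lift it to $\widetilde\Sigma_g$. Then
\[
\widetilde{\mathcal A}(\tilde y)-\widetilde{\mathcal A}(\tilde x)=\int_0^\delta d\mathcal A_{\gamma(s)}(\dot\gamma(s))\,ds
= \delta\, d\mathcal A_x(\dot\gamma(0)) + R,
\]
where $\|R\|\le \tfrac12 M\delta^2$. Here $M$ bounds $\|\nabla d\mathcal A\|$ (essentially $\sup_i\|\nabla h_i\|_{C^0}$ together with the geodesic curvature term, which vanishes since $\gamma$ is a geodesic). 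By smoothness of the $h_i$ and compactness, $M$ is finite. Combining with Step 1 gives
\[
\|\widetilde{\mathcal A}(\tilde y)-\widetilde{\mathcal A}(\tilde x)\|\ge 2c_1\delta-\tfrac12M\delta^2\ge c_1\delta
\]
as soon as $\delta\le 2c_1/M$.

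\emph{Step 3: passing to the Jacobian.} Set $r=\min(r_0,2c_1/M,\rho)$, where $\rho$ is chosen so that $\sup_i\|h_i\|_{C^0}\,\rho$ is less than half the length of the shortest nonzero lattice vector of $\Gamma$. Then the lifted displacement is the shortest representative modulo $\Gamma$, so the estimate holds for $\mathcal A$ itself.

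The main obstacle is Step 1, namely the pointwise injectivity of $d\mathcal A$. It rests on the classical fact that the canonical linear system has no base points for $g\ge2$. Concretely, for each $p$ there is a holomorphic differential $\theta$ with $\theta(p)\neq0$, and then $\operatorname{Re}\theta$ and $\operatorname{Im}\theta=\operatorname{Re}(-i\theta)$ are harmonic and span $T_p^*\Sigma_g$. I would invoke this as cited from \cite{GriffithsHarris}. The remaining steps are routine compactness and Taylor-expansion arguments.
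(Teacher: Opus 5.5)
Your proof is correct, and it departs from the paper's at the key estimate.

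The paper also starts from pointwise injectivity of $d\mathcal A$, but it argues locally. For each $p$ it takes a geodesically convex neighborhood $U_p$ on which the minimal stretching satisfies $m(q)\ge\tfrac12 m(p)$. It writes $\mathcal A(y)-\mathcal A(x)=\int_0^1 d\mathcal A_{\gamma(t)}(\dot\gamma(t))\,dt$ along the geodesic, and then globalizes with a finite subcover and a Lebesgue number. Its key inequality, however, is $\bigl\|\int_0^1 d\mathcal A_{\gamma(t)}(\dot\gamma(t))\,dt\bigr\|\ge\int_0^1\|d\mathcal A_{\gamma(t)}(\dot\gamma(t))\|\,dt$. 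This runs the triangle inequality for integrals in the wrong direction: a pointwise lower bound on $\|d\mathcal A(\dot\gamma)\|$ bounds the length of the curve $\mathcal A\circ\gamma$, not the distance between its endpoints.

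Your Step 2 supplies the missing ingredient. You freeze the linearization at $x$ and control how $d\mathcal A(\dot\gamma)$ varies along the geodesic through the second-derivative bound $M$. The reverse triangle inequality then gives $\|\widetilde{\mathcal A}(\tilde y)-\widetilde{\mathcal A}(\tilde x)\|\ge 2c_1\delta-\tfrac12 M\delta^2$, with explicit radius $2c_1/M$. The paper's covering scheme could be repaired the same way. One would shrink $U_p$ until, in a chart, $d\mathcal A_q$ stays uniformly close to $d\mathcal A_p$ in operator norm, rather than merely requiring $m(q)\ge\tfrac12 m(p)$.

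Your route has two further advantages. It gets the global constant $c_1$ at once from compactness of the unit sphere bundle, so no Lebesgue-number step is needed. Your Step 3 also handles the passage from the lift in $\mathbb R^{2g}$ to distances in the torus $\operatorname{Jac}(\Sigma_g)$, which the paper passes over.

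One small slip in Step 3: the lifted displacement is bounded by $\sup_p\|d\mathcal A_p\|_{\mathrm{op}}\,\delta\le\sqrt{2g}\,\max_i\|h_i\|_{C^0}\,\delta$. So $\rho$ should be chosen with that constant rather than with $\sup_i\|h_i\|_{C^0}$.
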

	
	\begin{proof}
		Because the Abel--Jacobi map $\mathcal A:\Sigma_g \hookrightarrow
		\operatorname{Jac}(\Sigma_g)$ is an immersion (in fact, a holomorphic embedding
		for a suitable complex structure), its differential $d\mathcal A_p : T_p\Sigma_g
		\longrightarrow \mathbb R^{2g}$ is injective at every $p\in\Sigma_g$. For each
		$p$, define the \emph{minimal stretching} of $d\mathcal A$ on the unit tangent
		sphere by
		\[
		m(p) \;=\; \min\bigl\{ \|d\mathcal A_p(v)\|_{\mathbb R^{2g}} \;:\; v\in
		T_p\Sigma_g,\; \|v\|_{\mathsf g}=1 \bigr\}.
		\]
		Injectivity implies $m(p)>0$. The unit tangent bundle of $\Sigma_g$ is compact,
		and the function $(p,v)\mapsto \|d\mathcal A_p(v)\|$ is continuous, so $p\mapsto
		m(p)$ is continuous as well. Hence, for any given $p$, we can choose a
		neighborhood $U_p$ of $p$ and a constant $c_p>0$ such that
		\begin{equation}\label{eq:linear-stretch}
			\|d\mathcal A_q(v)\| \;\ge\; c_p \|v\|_{\mathsf g}
			\qquad \forall q\in U_p,\; \forall v\in T_q\Sigma_g.
		\end{equation}
		For instance, take $c_p = \frac12 m(p)$ and $U_p$ a set where $m(q) \ge \frac12
		m(p)$; such a $U_p$ exists by continuity of $m$.	The existence of such neighborhoods follows from the exponential map and the
		fact that normal coordinates give convex balls for small enough radii. Shrink
		$U_p$ further so that it is geodesically convex: for any $x,y\in U_p$ there
		exists a unique minimizing geodesic $\gamma:[0,1]\to\Sigma_g$ from $x$ to $y$
		whose image lies entirely in $U_p$. Now let $x,y\in U_p$ and let $\gamma$ be
		this geodesic. Then
		\[
		\mathcal A(y)-\mathcal A(x) \;=\; \int_0^1 d\mathcal A_{\gamma(t)}\bigl(\dot\gamma(t)\bigr)\,dt.
		\]
		Taking norms and applying \eqref{eq:linear-stretch} along the curve,
		\[
		\begin{aligned}
			\|\mathcal A(y)-\mathcal A(x)\|_{\mathbb R^{2g}}
			&\;\ge\; \int_0^1 \bigl\| d\mathcal A_{\gamma(t)}(\dot\gamma(t)) \bigr\|\,dt \\
			&\;\ge\; c_p \int_0^1 \|\dot\gamma(t)\|_{\mathsf g}\,dt \\
			&\;=\; c_p\, d_{\mathsf g}(x,y).
		\end{aligned}
		\]
		Thus we have exhibited, for each $p$, a neighborhood $V_p:=U_p$ (already
		geodesically convex) and a constant $c_p>0$ satisfying
		\[
		\|\mathcal A(x)-\mathcal A(y)\| \;\ge\; c_p\, d_{\mathsf g}(x,y) \qquad
		\forall x,y\in V_p.
		\]
		
		The family $\{V_p\}_{p\in\Sigma_g}$ forms an open cover of the compact surface
		$\Sigma_g$. Choose a finite subcover $\{V_{p_1},\dots,V_{p_N}\}$ and let $r>0$
		be a Lebesgue number of this covering (every subset of diameter $<r$ is
		contained in some $V_{p_i}$). Define
		\[
		c_1 \;=\; \min\{ c_{p_1}, \dots, c_{p_N} \} \;>\;0.
		\]
		Finally, if $x,y\in\Sigma_g$ satisfy $d_{\mathsf g}(x,y)<r$, then the set
		$\{x,y\}$ has diameter $<r$, so it lies entirely within one of the $V_{p_i}$.
		Consequently,
		\[
		\|\mathcal A(x)-\mathcal A(y)\| \;\ge\; c_{p_i}\, d_{\mathsf g}(x,y) \;\ge\;
		c_1\, d_{\mathsf g}(x,y),
		\]
		which completes the proof.
	\end{proof}
	
	\begin{lemma}[Global Separation]
		\label{lem:global_sep}
		Let $r>0$ be as in Lemma~\ref{lem:rigidity}. Since $\mathcal A:\Sigma_g
		\hookrightarrow \operatorname{Jac}(\Sigma_g)$ is an embedding and $\Sigma_g$ is
		compact, the continuous function $(x,y)\mapsto \|\mathcal A(x)-\mathcal A(y)\|$
		attains a positive minimum on the compact set $\{(x,y): d_{\mathsf g}(x,y)\ge
		r\}$. Hence there exists $\delta>0$ such that
		\begin{equation}
			d_{\mathsf g}(x,y)\ge r \implies \|\mathcal A(x)-\mathcal A(y)\|\ge \delta.
			\label{eq:global_sep}
		\end{equation}
	\end{lemma}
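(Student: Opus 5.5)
The plan is a compactness argument on the closed set $K_r:=\{(x,y)\in\Sigma_g\times\Sigma_g : d_{\mathsf g}(x,y)\ge r\}$, using only that $\mathcal A$ is a continuous injection. Here $\|\mathcal A(x)-\mathcal A(y)\|$ is read as the quotient distance on $\operatorname{Jac}(\Sigma_g)=\mathbb R^{2g}/\Gamma$, namely $\min_{\lambda\in\Gamma}\|\widetilde{\mathcal A}(\tilde x)-\widetilde{\mathcal A}(\tilde y)-\lambda\|_{\mathbb R^{2g}}$. This is the only reading under which the expression is well defined on $\Sigma_g$. If one prefers to think of it as the Euclidean norm of lifts, the quotient distance is a lower bound for it, so the same $\delta$ still works.

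The steps are as follows.

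\begin{enumerate}
\item \emph{$K_r$ is compact.} The map $(x,y)\mapsto d_{\mathsf g}(x,y)$ is continuous on the compact space $\Sigma_g\times\Sigma_g$. Hence $K_r$ is a closed subset of a compact space, and so compact. If $K_r=\emptyset$, which happens when $r$ exceeds the diameter, the implication \eqref{eq:global_sep} is vacuous and any $\delta>0$ works. So assume $K_r\neq\emptyset$.

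\item \emph{The separation function is continuous.} Set $\Delta(x,y):=d_{\mathrm{Jac}}(\mathcal A(x),\mathcal A(y))$. The map $\mathcal A$ is continuous, and the quotient distance on a flat torus is continuous (indeed $1$-Lipschitz in each variable). So $\Delta$ is continuous on $\Sigma_g\times\Sigma_g$, and it attains its minimum $\delta$ at some $(x_*,y_*)\in K_r$.

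\item \emph{The minimum is positive.} Since $d_{\mathsf g}(x_*,y_*)\ge r>0$, we have $x_*\neq y_*$. Injectivity of $\mathcal A$ (it is an embedding, as recalled in the subsection on the Abel--Jacobi map) gives $\mathcal A(x_*)\neq\mathcal A(y_*)$ in $\operatorname{Jac}(\Sigma_g)$. The quotient distance is a genuine metric, because $\Gamma$ is a discrete lattice and the minimum over $\Gamma$ is attained. Therefore $\delta=\Delta(x_*,y_*)>0$.

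\item \emph{Conclusion.} For every $(x,y)$ with $d_{\mathsf g}(x,y)\ge r$ we get $\Delta(x,y)\ge\delta$, which is \eqref{eq:global_sep}.
\end{enumerate}

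The argument is routine, and the main point needing care is step 3. The positivity genuinely uses injectivity of $\mathcal A$ into the torus, not merely the immersion property used in Lemma~\ref{lem:rigidity}, and it needs the quotient distance to be nondegenerate. For the latter I would note that $\Gamma$ has a positive minimal length, so $\widetilde{\mathcal A}(\tilde x)-\widetilde{\mathcal A}(\tilde y)$ lies at positive distance from $\Gamma$ exactly when it is not itself a lattice vector. Combining this lemma with Lemma~\ref{lem:rigidity} then yields a global lower bound of the form $d_{\mathrm{Jac}}(\mathcal A(x),\mathcal A(y))\ge \min\{c_1,\delta/\operatorname{diam}(\Sigma_g)\}\,d_{\mathsf g}(x,y)$. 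This bi-Lipschitz lower bound is what later estimates will use.
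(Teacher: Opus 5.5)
Your proof is correct and follows the same compactness argument the paper gives inline: the separation function is continuous on the compact set $K_r$, it attains its minimum there, and injectivity of $\mathcal A$ forces that minimum to be positive. Reading $\|\mathcal A(x)-\mathcal A(y)\|$ as the quotient distance on $\operatorname{Jac}(\Sigma_g)$ and treating the case $K_r=\emptyset$ separately make explicit what the paper leaves implicit.
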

	
	Combining the two lemmas, we have a uniform lower bound for all distances: for
	all $x,y\in \Sigma_g$,
	\begin{equation}\label{Lower-bound}
		\|\mathcal A(x)-\mathcal A(y)\| \ge \min\{ c_1\, d_{\mathsf g}(x,y),\;
		\delta \}.
	\end{equation}
	
	\begin{theorem}[Non-degeneracy on Hamiltonian]
		\label{thm:non_deg_ham}
		Let $(\Sigma_g,\omega)$ be a closed oriented surface of genus $g\ge2$. Then the
		harmonic norm is non-degenerate on $\Ham(\Sigma_g,\omega)$. 
	\end{theorem}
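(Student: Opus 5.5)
To prove Theorem~\ref{thm:non_deg_ham}, we must show that $\phi\in\Ham(\Sigma_g,\omega)$ with $\|\phi\|_{\mathrm{harm}}=0$ forces $\phi=\mathrm{id}$. The converse is trivial: for $\phi=\mathrm{id}$ we may take $\widetilde\phi=\mathrm{id}$, so the displacement vanishes identically and $\mathcal F(\mathrm{id})\equiv0$.

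The plan is to combine Proposition~\ref{prop:kernel} with Proposition~\ref{prop:lift_characterization}. First, since $\|\phi\|_{\mathrm{harm}}=0$, Proposition~\ref{prop:kernel} gives
\[
\widetilde{\mathcal A}(\widetilde\phi(\tilde x))-\widetilde{\mathcal A}(\tilde x)=\widetilde{\mathbf c}(\phi)
\qquad\text{for all }\tilde x\in\widetilde\Sigma_g .
\]
Second, because $\phi$ is Hamiltonian, Proposition~\ref{prop:lift_characterization} (or Lemma~\ref{lem:AJ_decomposition}(6)) gives $\widetilde{\mathbf c}(\phi)=0$. Hence $\widetilde{\mathcal A}\circ\widetilde\phi=\widetilde{\mathcal A}$ on the universal cover. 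Projecting to $\operatorname{Jac}(\Sigma_g)=\mathbb R^{2g}/\Gamma$ and using that $\pi\circ\widetilde\phi=\phi\circ\pi$, we obtain $\mathcal A(\phi(x))=\mathcal A(x)$ for all $x\in\Sigma_g$.

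Third, injectivity of $\mathcal A$ finishes the argument. This can be done quantitatively via \eqref{Lower-bound}:
\[
0=\|\mathcal A(\phi(x))-\mathcal A(x)\|\ge\min\{c_1\,d_{\mathsf g}(\phi(x),x),\delta\}.
\]
Since $\delta>0$, the minimum must be the first term, and that term must vanish. So $d_{\mathsf g}(\phi(x),x)=0$ for every $x$, i.e.\ $\phi=\mathrm{id}$. Notably, this Hamiltonian case does not need the no-translation-symmetry (Torelli) input. That input is only required when $\widetilde{\mathbf c}(\phi)\neq0$, which is the situation addressed in the full identity component version.

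The main obstacle is the compatibility of lifts. The vector $\widetilde{\mathbf c}(\phi)$ is computed with a particular lift $\widetilde\phi$, and Lemma~\ref{lem:AJ_decomposition}(2) asserts independence of the lift. But changing $\widetilde\phi$ by a deck transformation $\gamma$ shifts $\widetilde{\mathcal A}\circ\widetilde\phi$ by a period vector in $\Gamma$. So the honest statement is only $\widetilde{\mathcal A}\circ\widetilde\phi-\widetilde{\mathcal A}\in\Gamma$ for some lift. The fix is to pass to $\operatorname{Jac}(\Sigma_g)$ before invoking injectivity: the displacement is then $0\bmod\Gamma$ regardless of the lift, and $\mathcal A$ is injective as a map into the torus. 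The lower bound \eqref{Lower-bound} should accordingly be read with the flat distance on $\operatorname{Jac}(\Sigma_g)$. It holds in that form by the same compactness arguments as Lemmas~\ref{lem:rigidity} and~\ref{lem:global_sep}, since $\mathcal A$ is an embedding into the torus.
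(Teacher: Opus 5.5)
Your proof is correct and follows the paper's own argument. Proposition~\ref{prop:kernel} makes the Abel--Jacobi displacement constant, the Hamiltonian hypothesis forces $\widetilde{\mathbf c}(\phi)=0$, and so $\mathcal A\circ\phi=\mathcal A$ in $\operatorname{Jac}(\Sigma_g)$, after which injectivity of $\mathcal A$ gives $\phi=\mathrm{id}$ without the translation rigidity of Proposition~\ref{prop:aj_rigidity}. Your use of \eqref{Lower-bound} is only injectivity restated, and plain injectivity is enough, so you need not rely on Lemma~\ref{lem:rigidity}; your caveat that $\widetilde{\mathbf c}(\phi)$ is lift-independent only modulo $\Gamma$ is accurate and is handled, as in the paper, by reading the identity in $\operatorname{Jac}(\Sigma_g)$.
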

	
	\begin{proof}
		Assume that $\|\phi\|_{\mathrm{harm}} = 0.$ By Proposition~\ref{prop:kernel},
		\[
		\mathcal A(\phi(x))-\mathcal A(x)
		=
		\widetilde{\mathbf c}(\phi)\bmod\Gamma,
		\]
		where $\widetilde{\mathbf c}(\phi)\in \mathbb R^{2g} \simeq
		H^1(\Sigma_g;\mathbb R)$ is the unique representative of the flux class. Since
		$\phi$ is Hamiltonian, then $\widetilde{\mathbf c}(\phi) = 0$ by Lemma
		\ref{lem:AJ_decomposition}-(3). Hence, $\mathcal A(\phi(x)) = \mathcal A(x)$
		for all $x\in\Sigma_g.$ Finally, since $\mathcal A$ is injective,
		$\phi(x)=x$ for $x\in\Sigma_g,$ and therefore $\phi=\mathrm{id}.$ The converse
		implication is immediate from the definition of $\|\cdot\|_{\mathrm{harm}}$.
	\end{proof}
	
	\begin{proposition}[Rigidity of the Abel--Jacobi image]
		\label{prop:aj_rigidity}
		Let $\Sigma_g$ be a closed Riemann surface of genus $g\ge2$, and let $\mathcal
		A:\Sigma_g\longrightarrow \operatorname{Jac}(\Sigma_g)$ be its Abel--Jacobi
		embedding. Suppose that $\mathcal A(\Sigma_g)+a=\mathcal A(\Sigma_g)$ for some
		$a\in\operatorname{Jac}(\Sigma_g).$ Then, $a=0.$ Equivalently, the Abel--Jacobi
		image admits no non-trivial translation symmetry.
	\end{proposition}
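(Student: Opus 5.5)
The plan is to turn the translation $\tau_a\colon u\mapsto u+a$ of $\operatorname{Jac}(\Sigma_g)$ into an automorphism of the surface itself. I would then show that this automorphism acts trivially on cohomology, and use a Lefschetz fixed-point count, which is where $g\ge2$ enters, to show it must be the identity. Set $C=\mathcal A(\Sigma_g)$. Since $C+a=C$, the translation $\tau_a$ restricts to a bijection of $C$. Because $\mathcal A$ is a holomorphic embedding (Section~2.3), the map
\[
\sigma:=\mathcal A^{-1}\circ\tau_a\circ\mathcal A\colon\Sigma_g\longrightarrow\Sigma_g
\]
is a well-defined homeomorphism. It is biholomorphic: $\tau_a$ is a biholomorphism of the complex torus, and $\mathcal A$ identifies $\Sigma_g$ biholomorphically with the smooth curve $C$. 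By construction,
\[
\mathcal A(\sigma(x))=\mathcal A(x)+a\qquad\text{for all }x\in\Sigma_g.
\]

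The first key step is to show that $\sigma^*$ acts as the identity on $H^1(\Sigma_g;\mathbb R)$. Lift the displayed identity to the universal cover, as in the proof of Proposition~\ref{prop:kernel}:
\[
\widetilde{\mathcal A}\circ\widetilde\sigma=\widetilde{\mathcal A}+\tilde a+\lambda(\tilde x),
\]
where $\lambda$ takes values in the lattice $\Gamma$. By continuity and discreteness of $\Gamma$, $\lambda$ is constant. Differentiating gives $\widetilde\sigma^*\pi^*h_i=\pi^*h_i$, and hence $\sigma^*h_i=h_i$ for all $i$. This is the same computation as (ii)$\Rightarrow$(iii) in Proposition~\ref{prop:kernel}. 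Since the $h_i$ span $\mathcal H^1\cong H^1(\Sigma_g;\mathbb R)$, we get $\sigma^*=\mathrm{id}$ on $H^1$.

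The second step, which I expect to be the main obstacle, is to deduce $\sigma=\mathrm{id}$. I would argue by contradiction using the Lefschetz fixed-point theorem. Suppose $\sigma\neq\mathrm{id}$. The Lefschetz number is
\[
L(\sigma)=1-\operatorname{tr}\bigl(\sigma^*|_{H^1}\bigr)+1=2-2g<0.
\]
On the other hand, a nontrivial holomorphic self-map of a connected Riemann surface has isolated fixed points, because its fixed set is the zero set of a nonzero holomorphic function in local charts. At each such fixed point $\sigma(z)=\lambda z+O(z^2)$ with $\lambda\ne1$, or $\lambda=1$ with a higher-order term, so the local fixed-point index is a positive integer: it is the multiplicity of the zero of $\sigma(z)-z$, and holomorphic maps preserve orientation. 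Hence $L(\sigma)\ge0$, which contradicts $2-2g<0$ for $g\ge2$. The care needed is in checking that the holomorphic local indices are positive (the standard fact that complex zeros count positively) and that $\sigma$ really is holomorphic for the complex structure in which $\mathcal A$ is holomorphic. If one prefers to avoid Lefschetz, the alternative is to invoke directly the classical fact, for example from \cite{GriffithsHarris}, that $\operatorname{Aut}(\Sigma_g)\to\operatorname{Aut}(H^1(\Sigma_g;\mathbb Z))$ is injective for $g\ge2$.

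Finally, $\sigma=\mathrm{id}$ gives $\mathcal A(x)+a=\mathcal A(x)$ in $\operatorname{Jac}(\Sigma_g)$ for any $x$, so $a=0$. Note that the argument never uses a symplectic structure; it is purely about the complex geometry of $\mathcal A$, consistent with the role of Proposition~\ref{prop:aj_rigidity} as the input for non-degeneracy on all of $G_\omega(\Sigma_g)$.
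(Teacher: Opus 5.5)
Your argument is correct, but it differs from the paper's in both steps.

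For the homological step, the paper notes that $T_a$ is homotopic to the identity via $t\mapsto T_{ta}$, so $(T_a)_*=\mathrm{id}$ on $H_1(\Jac(\Sigma_g);\mathbb Z)$. It then transports this to $\Sigma_g$ using the classical fact that $\mathcal A_*$ is an isomorphism on $H_1$. You instead differentiate the lifted identity $\widetilde{\mathcal A}\circ\widetilde\sigma=\widetilde{\mathcal A}+\tilde a+\lambda$ to get $\sigma^*h_i=h_i$ as forms. This is self-contained, since it only reuses the computation from Proposition~\ref{prop:kernel} plus Hodge theory, and it is a pointwise statement stronger than what is needed.

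For the Lefschetz step, the paper never shows that the transported automorphism is the identity. Assuming $a\neq0$, a fixed point $x$ would give $\mathcal A(x)+a=\mathcal A(x)$. So the map is fixed-point free, hence $L=0$, which contradicts $L=2-2g\neq0$. This uses only that the map is a continuous self-map acting trivially on homology. It needs no holomorphicity and no local index computation, and it needs only $L\neq0$ rather than $L<0$.

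Your route instead proves the stronger classical fact that a holomorphic automorphism of a surface of genus $g\ge2$ acting trivially on $H^1$ is the identity. That costs you holomorphicity of $\sigma$ and positivity of holomorphic fixed-point indices.

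The step you flagged as the main obstacle can therefore be bypassed. The observation in your final sentence, read contrapositively, already shows that $\sigma$ has no fixed points when $a\neq0$, and the Lefschetz theorem finishes the proof at once.
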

	
	\begin{proof}
		Set $C = \mathcal A(\Sigma_g)\subset \operatorname{Jac}(\Sigma_g)$.
		Assume $C + a = C$ and define the translation map
		\[
		T_a : \operatorname{Jac}(\Sigma_g)\longrightarrow \operatorname{Jac}(\Sigma_g),
		\qquad T_a(x)=x+a.
		\]
		By hypothesis $T_a(C)=C$, so the restriction $T_a|_C$ is a biholomorphic
		automorphism of the compact Riemann surface $C$. Via the Abel--Jacobi embedding
		$\mathcal A$, we transport this automorphism to $\Sigma_g$:
		\[
		\tau := \mathcal A^{-1} \circ T_a \circ \mathcal A : \Sigma_g \longrightarrow
		\Sigma_g,
		\]
		which is a holomorphic automorphism of $\Sigma_g$.
		
		\medskip\noindent
		\textbf{Action on homology.}
		The translation $T_a$ is homotopic to the identity on the torus (the homotopy
		$t\mapsto T_{ta}$ connects $T_0=\operatorname{id}$ to $T_a$). Hence it induces
		the identity on the singular homology group $H_1(\operatorname{Jac}(\Sigma_g);\mathbb{Z})$:
		\[
		(T_a)_* = \operatorname{id}_{H_1(\operatorname{Jac}(\Sigma_g))}.
		\]
		It is a classical theorem that the Abel--Jacobi embedding induces an isomorphism
		\[
		\mathcal A_*: H_1(\Sigma_g;\mathbb Z) \stackrel{\cong}{\longrightarrow}
		H_1(\operatorname{Jac}(\Sigma_g);\mathbb Z).
		\]
		Consequently,
		\[
		\tau_* = (\mathcal A_*)^{-1} \circ (T_a)_* \circ \mathcal A_* =
		\operatorname{id}_{H_1(\Sigma_g;\mathbb{Z})}.
		\]
		
		\medskip\noindent
		\textbf{Lefschetz number.}
		Consider the Lefschetz number of the continuous map $\tau$,
		\[
		L(\tau) = \sum_{i=0}^{2} (-1)^i \operatorname{Tr}\bigl(\tau_*|_{H_i(\Sigma_g;\mathbb{Q})}\bigr).
		\]
		Since $\tau$ is biholomorphic, it preserves the complex orientation.
		Consequently, $\tau_*=\mathrm{id}$ on $H_0(\Sigma_g;\mathbb Q)$ and
		$H_2(\Sigma_g;\mathbb Q).$ Thus, $\operatorname{Tr}(\tau_*|_{H_0}) = 1$ and
		$\operatorname{Tr}(\tau_*|_{H_2}) = 1$. On $H_1$ we have just shown $\tau_* =
		\operatorname{id}$, so $\operatorname{Tr}(\tau_*|_{H_1}) = 2g$. Therefore,
		\[
		L(\tau) = 1 - 2g + 1 = 2 - 2g.
		\]
		Since $g \ge 2$, we have $L(\tau) \neq 0$.
		
		\medskip\noindent
		\textbf{No fixed points.}
		If $a \neq 0$, then the translation $T_a$ has no fixed points on the Jacobian
		(the equation $x+a=x$ has no solution in an abelian variety). Now suppose, for a
		contradiction, that $\tau(x) = x$ for some $x\in\Sigma_g$. Then
		\[
		T_a(\mathcal A(x)) = \mathcal A(\tau(x)) = \mathcal A(x),
		\]
		which contradicts the fact that a nontrivial translation on an abelian variety
		has no fixed points. Hence $\tau$ is fixed-point-free.
		
		A continuous map on a compact manifold without fixed points must have Lefschetz
		number zero by the Lefschetz fixed-point theorem. But we computed $L(\tau) =
		2-2g \neq 0$. This contradiction shows that no nontrivial translation preserves
		$\mathcal A(\Sigma_g)$. Hence $a=0$.
	\end{proof}
	
	\begin{remark}
		The proposition is a special case of the general fact that the stabilizer of the
		Abel--Jacobi image of a curve of genus at least two inside its Jacobian is
		trivial. We included the above proof because it is elementary and relies only on
		homological properties of the Abel--Jacobi embedding and the Lefschetz
		fixed-point theorem.
	\end{remark}
	
	\begin{theorem}[Non-degeneracy on the Identity Component]
		\label{thm:non_deg_full}
		Let $(\Sigma_g,\omega)$ be a closed oriented surface of genus $g\ge2$. Then the
		harmonic norm is non-degenerate:
		\[
		\|\phi\|_{\mathrm{harm}}=0
		\quad\Longleftrightarrow\quad
		\phi=\mathrm{id}.
		\]
	\end{theorem}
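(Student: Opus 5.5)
The plan is to reduce the general case to a translation statement on the Jacobian and then use the rigidity of the Abel--Jacobi image (Proposition~\ref{prop:aj_rigidity}) together with injectivity of $\mathcal A$. The direction $\phi=\mathrm{id}\Rightarrow\|\phi\|_{\mathrm{harm}}=0$ is immediate. Taking $\widetilde\phi=\mathrm{id}$ gives zero displacement. Hence $\widetilde{\mathbf c}(\mathrm{id})=0$ and $\widetilde{\mathcal F}(\mathrm{id})\equiv0$, using Lemma~\ref{lem:AJ_decomposition}(2) to see that the choice of lift is irrelevant. The content is the converse.

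Assume $\|\phi\|_{\mathrm{harm}}=0$. By Proposition~\ref{prop:kernel}, (i)$\Rightarrow$(ii), the lifted displacement $\widetilde{\mathcal A}(\widetilde\phi(\tilde x))-\widetilde{\mathcal A}(\tilde x)$ equals the constant $\widetilde{\mathbf c}(\phi)$ for every $\tilde x$. Projecting to $\operatorname{Jac}(\Sigma_g)=\mathbb R^{2g}/\Gamma$ and writing $a:=\mathbf c(\phi)$ gives
\[
\mathcal A(\phi(x))=\mathcal A(x)+a\qquad\forall x\in\Sigma_g .
\]
Since $\phi$ is a bijection of $\Sigma_g$, this yields $\mathcal A(\Sigma_g)+a=\mathcal A(\phi(\Sigma_g))=\mathcal A(\Sigma_g)$. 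So $T_a$ preserves the Abel--Jacobi image, and Proposition~\ref{prop:aj_rigidity} forces $a=0$ in $\operatorname{Jac}(\Sigma_g)$. Then $\mathcal A\circ\phi=\mathcal A$, and injectivity of the embedding $\mathcal A$ gives $\phi=\mathrm{id}$. This is the same final step as in Theorem~\ref{thm:non_deg_ham}, but the vanishing of the flux part now comes from Jacobian rigidity rather than from $\phi$ being Hamiltonian.

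The main obstacle is the application of Proposition~\ref{prop:aj_rigidity}. That proposition uses the complex structure: $T_a|_C$ must be biholomorphic so that $\tau=\mathcal A^{-1}\circ T_a\circ\mathcal A$ is an orientation-preserving homeomorphism. I would avoid relying on holomorphy. Here $\tau$ is simply $\phi$ itself, since $\mathcal A\circ\phi=T_a\circ\mathcal A$. Thus $\tau$ is automatically an orientation-preserving homeomorphism isotopic to the identity, and the Lefschetz argument applies directly. Indeed $L(\phi)=2-2g\neq0$ because $\phi\in G_\omega(\Sigma_g)$ is homotopic to $\mathrm{id}$, so $\phi$ has a fixed point $x$. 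At that point $a=\mathcal A(\phi(x))-\mathcal A(x)=0$.

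This route is more self-contained: it needs only Proposition~\ref{prop:kernel}, injectivity of $\mathcal A$, and the Lefschetz fixed-point theorem. I would present it as the proof and note that it is the special case $\tau=\phi$ of Proposition~\ref{prop:aj_rigidity}. As a consistency check, $a=0$ means $\widetilde{\mathbf c}(\phi)\in\Gamma$. Since $\mathcal A(\phi(x))=\mathcal A(x)$ at the fixed point, one can choose the lift $\widetilde\phi$ fixing a lift of $x$, and then $\widetilde{\mathbf c}(\phi)=0$ exactly. By Proposition~\ref{prop:lift_characterization} this places $\phi$ in $\Ham(\Sigma_g,\omega)$, so Theorem~\ref{thm:non_deg_ham} also applies.
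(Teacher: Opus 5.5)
Your proof is correct. Its skeleton matches the paper's: Proposition~\ref{prop:kernel} makes the Jacobian displacement a constant $a$, one shows $a=0$, and injectivity of $\mathcal A$ finishes. The difference is how you kill $a$.

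The paper quotes Proposition~\ref{prop:aj_rigidity}, a $\phi$-independent statement that $\mathcal A(\Sigma_g)$ has trivial translation stabilizer. Its proof runs Lefschetz on the auxiliary map $\tau=\mathcal A^{-1}\circ T_a\circ\mathcal A$. It therefore has to compute $\tau_*$ first, using that $\mathcal A_*$ is an isomorphism on $H_1$ and that $\tau$ preserves orientation. You notice that here $\tau=\phi$, which is homotopic to $\mathrm{id}$ because it lies in $G_\omega(\Sigma_g)$. So $L(\phi)=\chi(\Sigma_g)=2-2g\neq0$ gives a fixed point directly, and evaluating the constant displacement there gives $a=0$.

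Your route needs nothing about the topology of the Jacobian, the complex structure, or translation rigidity; only Lefschetz and injectivity of $\mathcal A$ are used. It also shows that on surfaces the non-degeneracy is really driven by the forced fixed point of $\phi$. Applied to $\phi\circ\psi^{-1}$, the same trick handles the step in Theorem~\ref{thm:flux_coordinate_map} where rigidity is quoted. The paper's formulation, in turn, isolates a property of the embedded curve alone. That is exactly the hypothesis carried into the Albanese conjecture, whose assumptions do not include $\chi(M)\neq0$.

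Two minor remarks. First, your holomorphy concern is not a real gap in Proposition~\ref{prop:aj_rigidity}. The map $\tau$ is a homeomorphism in any case, and $\tau_*=\mathrm{id}$ on $H_1$ already forces degree $+1$ through the intersection form. Second, the closing ``consistency check'' is best deleted. It is redundant once $\phi=\mathrm{id}$ is known, and it relies on lift-independence of $\widetilde{\mathbf c}(\phi)$. That independence holds only modulo $\Gamma$: replacing $\widetilde\phi$ by $\gamma\circ\widetilde\phi$ shifts $\widetilde{\mathbf c}$ by the period vector of $\gamma$. Your easy direction is unaffected, since $\widetilde{\mathcal F}$ is genuinely lift-independent; a change of lift alters only the constant part.
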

	
	\begin{proof}
		Assume that $\|\phi\|_{\mathrm{harm}}=0.$ By Proposition~\ref{prop:kernel}, the
		Abel--Jacobi displacement is constant:
		\[
		\mathcal A(\phi(x))-\mathcal A(x) = q(\widetilde{\mathbf c}(\phi)),
		\]
		for every $x\in\Sigma_g$, where
		\[
		q:H^1(\Sigma_g;\mathbb R)\longrightarrow H^1(\Sigma_g;\mathbb R)/\Gamma =
		\operatorname{Jac}(\Sigma_g),
		\]
		is the quotient map. Since $\phi$ is a diffeomorphism, $\phi(\Sigma_g)=\Sigma_g.$
		Therefore,
		\[
		\mathcal A(\Sigma_g) = \mathcal A(\phi(\Sigma_g)) = \mathcal A(\Sigma_g) +
		q(\widetilde{\mathbf c}(\phi)).
		\]
		Hence the Abel--Jacobi image is invariant under the translation
		$T_{q(\widetilde{\mathbf c}(\phi))}.$ By Proposition~\ref{prop:aj_rigidity}, the
		Abel--Jacobi image admits no non-trivial translation symmetry. Consequently,
		$q(\widetilde{\mathbf c}(\phi))=0$ in $\operatorname{Jac}(\Sigma_g).$	It follows that $\mathcal A(\phi(x)) = \mathcal A(x)$ for all $x\in\Sigma_g.$
		Since the Abel--Jacobi map is an embedding for $g\ge2$, it is injective.
		Therefore, $\phi(x)=x$ for all $x\in\Sigma_g,$ and hence $\phi=\mathrm{id}.$
		Conversely, if $\phi=\mathrm{id}$, then every harmonic displacement vanishes and
		therefore $\|\phi\|_{\mathrm{harm}}=0.$ 
	\end{proof}
	
	\begin{lemma}[Local Stability of Lifts]
		\label{lem:stability_lifts}
		Let $\pi:\widetilde{M}\to M$ be a regular covering of a compact Riemannian
		manifold $M$ equipped with the pullback metric. Let $K\subset\widetilde M$ be a
		compact connected subset, and let $\tilde{x}_0\in K$ be a fixed basepoint.
		Suppose that $f_n,f:M\to M$ are continuous maps satisfying $f_n\to f$ uniformly.
		Let $\widetilde f_n,\widetilde f:\widetilde M\to\widetilde M$ be lifts of
		$f_n\circ\pi$ and $f\circ\pi$, respectively, satisfying $\widetilde
		f_n(\tilde{x}_0)\longrightarrow \widetilde f(\tilde{x}_0).$ Then $\widetilde
		f_n|_K\longrightarrow \widetilde f|_K$ uniformly on $K$.
	\end{lemma}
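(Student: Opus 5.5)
The plan is a connectedness (open--closed) argument on $K$ driven by the local homeomorphism property of the covering, with uniform convergence obtained from compactness. Since $\pi$ is a covering of a compact manifold with the pullback metric, there is a uniform $\varepsilon_0>0$ such that every ball of radius $2\varepsilon_0$ in $M$ is evenly covered, and $\pi$ restricts to an isometry from each ball $B(\tilde y,\varepsilon_0)\subset\widetilde M$ onto $B(\pi(\tilde y),\varepsilon_0)$. Moreover, distinct points in a fibre are at distance at least $2\varepsilon_0$. I would fix this $\varepsilon_0$ first. Here $\widetilde f_n$ and $\widetilde f$ are maps $\widetilde M\to\widetilde M$ with $\pi\circ\widetilde f_n=f_n\circ\pi$, so $d_M\bigl(\pi\widetilde f_n(\tilde x),\pi\widetilde f(\tilde x)\bigr)\le\|f_n-f\|_{C^0}\to0$ uniformly in $\tilde x$.

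\textbf{Key pointwise dichotomy.} For large $n$ we have $\|f_n-f\|_{C^0}<\varepsilon_0$. For each $\tilde x$, the point $\widetilde f_n(\tilde x)$ then lies in $\pi^{-1}\bigl(B(f(\pi\tilde x),\varepsilon_0)\bigr)$, which is a disjoint union of sheets $B(\gamma\widetilde f(\tilde x),\varepsilon_0)$, $\gamma$ a deck transformation. Hence exactly one deck transformation $\gamma_n(\tilde x)$ satisfies
\[
d\bigl(\widetilde f_n(\tilde x),\gamma_n(\tilde x)\widetilde f(\tilde x)\bigr)=d_M\bigl(f_n(\pi\tilde x),f(\pi\tilde x)\bigr)<\varepsilon_0 .
\]
The set $\{\tilde x\in K:\gamma_n(\tilde x)=\gamma\}$ is open in $K$, by continuity of $\widetilde f_n$ and $\widetilde f$ together with the $2\varepsilon_0$ separation of the sheets. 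These sets partition $K$, so $\gamma_n$ is locally constant on $K$. Since $K$ is connected, $\gamma_n$ is constant on $K$.

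\textbf{Pinning the constant at the basepoint.} The hypothesis $\widetilde f_n(\tilde x_0)\to\widetilde f(\tilde x_0)$ gives $d\bigl(\widetilde f_n(\tilde x_0),\widetilde f(\tilde x_0)\bigr)<\varepsilon_0$ for $n$ large, which forces $\gamma_n(\tilde x_0)=\mathrm{id}$. Therefore $\gamma_n\equiv\mathrm{id}$ on $K$. For such $n$ and all $\tilde x\in K$,
\[
d\bigl(\widetilde f_n(\tilde x),\widetilde f(\tilde x)\bigr)=d_M\bigl(f_n(\pi\tilde x),f(\pi\tilde x)\bigr)\le\|f_n-f\|_{C^0},
\]
because $\pi$ is an isometry on the relevant $\varepsilon_0$-ball. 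The right-hand side tends to $0$, which gives uniform convergence on $K$. In fact the argument gives it on the whole path component of $\widetilde M$ containing $K$. Compactness of $K$ is not even strictly needed, only its connectedness.

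\textbf{Main obstacle.} The delicate step is the openness of each level set of $\gamma_n$. It needs the uniform evenly-covered radius and the fact that distinct sheets over an $\varepsilon_0$-ball are separated by at least $2\varepsilon_0$. Both should follow from compactness of $M$ via a Lebesgue number argument together with the fact that deck transformations are isometries for the pullback metric. I would verify these carefully before running the connectedness argument.
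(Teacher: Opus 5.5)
Your proof is correct and follows essentially the same route as the paper. For each $\tilde x$, both arguments pick out the point of the fibre over $f_n(\pi\tilde x)$ lying in the small evenly covered ball around $\widetilde f(\tilde x)$; connectedness of $K$ together with the basepoint condition then shows that $\widetilde f_n$ selects exactly that point throughout $K$, and one reads off $d\bigl(\widetilde f_n(\tilde x),\widetilde f(\tilde x)\bigr)\le\|f_n-f\|_{C^0}$.

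The differences are minor but in your favour. You take the uniform radius from compactness of $M$ rather than of $\widetilde f(K)$, so, as you note, compactness of $K$ is superfluous and the convergence is uniform on the whole component of $\widetilde M$ containing $K$. Your local constancy of the deck label $\gamma_n$ also replaces the paper's appeal to uniqueness of lifts for the auxiliary geodesic lift $\widetilde g_n$, whose continuity the paper does not verify explicitly.
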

	
	\begin{proof}
		Since $K$ is compact and $\widetilde f$ is continuous, the set $K_1:=\widetilde
		f(K)$ is a compact subset of $\widetilde M$. Because $\pi$ is a covering local
		isometry, every point $\tilde y\in K_1$ admits a radius $r_{\tilde y}>0$ such
		that the ball $B_{\widetilde M}(\tilde y,r_{\tilde y})$ is contained in an
		evenly covered neighborhood $\widetilde U_{\tilde y}\subset\widetilde M$ and is
		geodesically convex. The family $\left\{ B_{\widetilde M}(\tilde y,r_{\tilde
			y}/2) \right\}_{\tilde y\in K_1}$ is an open cover of the compact set $K_1$.
		Hence there exist finitely many points $\tilde y_1,\ldots,\tilde y_m\in K_1$
		such that
		\[
		K_1\subset \bigcup_{j=1}^{m} B_{\widetilde M} (\tilde y_j,r_j/2),
		\]
		where $r_j:=r_{\tilde y_j}.$ Define
		\[
		\delta_K = \frac14\min_{1\le j\le m}r_j.
		\]
		Then for every $\tilde y\in K_1$, there exists an index $j$ such that $\tilde
		y\in B_{\widetilde M}(\tilde y_j,r_j/2).$ Therefore, $B_{\widetilde M}(\tilde
		y,\delta_K) \subset B_{\widetilde M}(\tilde y_j,r_j) \subset \widetilde
		U_{\tilde y_j}.$ Hence every $\delta_K$-ball centered at a point of $K_1$ is
		contained in a geodesically convex evenly covered neighborhood. Since $f_n\longrightarrow
		f$ uniformly on $M$, there exists $N$ such that for every $n\ge N$,
		\[
		\sup_{x\in M} d_M(f_n(x),f(x)) < \delta_K.
		\]
		Fix $n\ge N$ and $\tilde x\in K$. Put $\tilde y=\widetilde f(\tilde x)\in K_1.$
		The points $f(\pi(\tilde x))$ and $f_n(\pi(\tilde x))$ are at distance less than
		$\delta_K$. Hence there exists a unique minimizing geodesic $\gamma_{n,\tilde
			x}:[0,1]\to M$ joining them. Lift this geodesic to a path $\widetilde\gamma_{n,\tilde
			x}$ starting at $\widetilde\gamma_{n,\tilde x}(0) = \widetilde f(\tilde x).$
		Let $\widetilde g_n(\tilde x) = \widetilde\gamma_{n,\tilde x}(1).$ By
		construction, $\pi(\widetilde g_n(\tilde x)) = f_n(\pi(\tilde x)),$ so
		$\widetilde g_n$ is a lift of $f_n\circ\pi|_K.$ We now estimate the distance
		between the endpoints. Since the length of the lifted geodesic equals the length
		of the original geodesic, we have
		\[
		\operatorname{Length} (\widetilde\gamma_{n,\tilde x}) =
		d_M(f_n(\pi(\tilde x)),f(\pi(\tilde x))) < \delta_K.
		\]
		Because $B_{\widetilde M}(\widetilde f(\tilde x),\delta_K)$ is contained in a
		geodesically convex evenly covered neighborhood, the lifted geodesic remains
		inside this neighborhood. On this neighborhood, $\pi$ is an isometry. Therefore,
		\[
		d_{\widetilde M} (\widetilde g_n(\tilde x),\widetilde f(\tilde x)) =
		d_M(f_n(\pi(\tilde x)),f(\pi(\tilde x))) \le \|f_n-f\|_{C^0}.
		\]
		Consequently,
		\[
		\sup_{\tilde x\in K} d_{\widetilde M} (\widetilde g_n(\tilde x),\widetilde
		f(\tilde x)) \le \|f_n-f\|_{C^0}.
		\tag{a}
		\]
		
		It remains to identify $\widetilde g_n$ with the prescribed lift $\widetilde
		f_n$. By assumption, $\widetilde f_n(\tilde x_0) \longrightarrow \widetilde
		f(\tilde x_0).$ Moreover, applying (a) at $\tilde x_0$ gives $\widetilde
		g_n(\tilde x_0) \longrightarrow \widetilde f(\tilde x_0).$ Hence for all
		sufficiently large $n$, both points $\widetilde f_n(\tilde x_0)$ and
		$\widetilde g_n(\tilde x_0)$ belong to the same evenly covered neighborhood of
		$\widetilde f(\tilde x_0)$. They are lifts of the same point:
		\[
		\pi(\widetilde f_n(\tilde x_0)) = f_n(\pi(\tilde x_0)) =
		\pi(\widetilde g_n(\tilde x_0)).
		\]
		Since $\pi$ is injective on this evenly covered neighborhood, we conclude that
		$\widetilde f_n(\tilde x_0) = \widetilde g_n(\tilde x_0).$ Therefore,
		$\widetilde f_n|_K$ and $\widetilde g_n$ are two lifts of the same map
		$f_n\circ\pi|_K$ which agree at the point $\tilde x_0$. Since $K$ is connected,
		the uniqueness of lifts implies $\widetilde f_n|_K = \widetilde g_n.$ Combining
		this equality with (a), we obtain
		\[
		\sup_{\tilde x\in K} d_{\widetilde M} (\widetilde f_n(\tilde x),\widetilde
		f(\tilde x)) \le \|f_n-f\|_{C^0}.
		\]
		Because $\|f_n-f\|_{C^0}\longrightarrow0,$ we conclude that $\widetilde f_n|_K
		\longrightarrow \widetilde f|_K$ uniformly on $K$.
	\end{proof}
	
	\begin{figure}[htbp]
		\centering
		\begin{tikzpicture}[scale=1.1, >=Stealth]
			\node[anchor=west] at (-4, 3) {\textbf{Covering Space} $\widetilde{M}$};
			
			\foreach \h in {2.5, 3.5} {
				\draw[gray!40, thick] (-3, \h) arc (180:360:3 and 0.6);
				\draw[gray!20, thin, dashed] (-3, \h) arc (180:0:3 and 0.6);
			}
			
			\draw[color=vertcol, fill=vertcol!10, ultra thick, opacity=0.6] 
			(-1, 3.5) .. controls (0, 3.8) and (1, 3.2) .. (2, 3.4) node[right, opacity=1] {$K$};
			
			\filldraw (0, 3.62) circle (1.5pt) node[above] {$\tilde{x}_0$};
			\filldraw (1.5, 3.35) circle (1.5pt) node[above] {$\tilde{x}$};
			
			\draw[gray!60, thick] (-3, 6) arc (180:360:3 and 0.6);
			
			\draw[->, color=horizcol, thick] (0, 3.62) to[bend left=30] (0.5, 6.1) node[above] {$\widetilde{f}(\tilde{x}_0)$};
			\draw[->, color=vertcol, dashed] (0, 3.62) to[bend left=10] (0.3, 5.8) node[below] {$\widetilde{f}_n(\tilde{x}_0)$};
			
			\draw[<->, thin] (0.45, 6.05) -- (0.3, 5.85) node[midway, right, font=\tiny] {$\to 0$};
			
			\draw[->, ultra thick, gray!60] (-3.2, 2.5) -- (-3.2, 0.5) node[midway, left] {$\pi$};
			
			\node[anchor=west] at (-4, 0) {\textbf{Base} $M$};
			\draw[thick] (-3, 0) arc (180:360:3 and 0.6);
			\draw[thin, gray!30] (-3, 0) arc (180:0:3 and 0.6);
			
			\filldraw (0, 0) circle (1.5pt) node[below] {$x_0$};
			\filldraw (1.5, -0.15) circle (1.5pt) node[below] {$x$};
			
			\draw[->, color=horizcol, thick] (1.5, -0.15) to[bend right=20] (4.5, 0.2) node[right] {$f(x)$};
			\draw[->, color=vertcol, dashed] (1.5, -0.15) to[bend right=40] (4.5, -0.2) node[right] {$f_n(x)$};
			
			\draw[<->, thin] (4.5, 0.1) -- (4.5, -0.1) node[midway, left, font=\tiny] {$\to 0$};
			
			\node[draw, font=\footnotesize, fill=white] at (5, 4.5) {
				\begin{tabular}{l}
					\textcolor{horizcol}{---} Limit Map ($\widetilde{f}$) \\
					\textcolor{vertcol}{- -} Sequence ($\widetilde{f}_n$)
				\end{tabular}
			};
			
		\end{tikzpicture}
		\tiny{	\caption{Stability of Lifts: Uniform convergence of $f_n \to f$ on the base, combined with convergence of the lifts at a single point $\tilde{x}_0$, forces uniform convergence of $\widetilde{f}_n \to \widetilde{f}$ on the connected set $K$.}}
	\end{figure}

	\begin{theorem}[$C^0$-Continuity of the Harmonic Norm]
		\label{thm:C0continuity}
		The harmonic norm $\|\cdot\|_{\mathrm{harm}}: G_\omega(\Sigma_g) \longrightarrow
		[0,\infty)$ is continuous with respect to the $C^0$-topology.
	\end{theorem}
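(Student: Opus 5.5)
The plan is to show that if $\phi_n\to\phi$ uniformly in $G_\omega(\Sigma_g)$, then $\mathcal F(\phi_n)\to\mathcal F(\phi)$ uniformly on $\Sigma_g$. Continuity of $\|\cdot\|_{\mathrm{harm}}$ then follows from the reverse triangle inequality
\[
\bigl|\|\phi_n\|_{\mathrm{harm}}-\|\phi\|_{\mathrm{harm}}\bigr|\le \sup_{x}\|\mathcal F(\phi_n)(x)-\mathcal F(\phi)(x)\|.
\]
By Proposition~\ref{DECK}, it suffices to work on a compact connected fundamental domain $\overline D\subset\widetilde\Sigma_g$. Here $\mathcal F(\phi)(x)=\widetilde{\mathcal F}(\phi)(\tilde x)$ for any lift $\tilde x\in\overline D$. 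By Lemma~\ref{lem:AJ_decomposition}(2), $\widetilde{\mathbf c}$ does not depend on the choice of lift $\widetilde\phi$, so we are free to choose lifts conveniently.

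\textbf{Step 1 (lifts).} Fix a lift $\widetilde\phi$ and a basepoint $\tilde x_0\in\overline D$. Since $\phi_n(x_0)\to\phi(x_0)$, we can choose lifts $\widetilde\phi_n$ with $\widetilde\phi_n(\tilde x_0)\to\widetilde\phi(\tilde x_0)$. To do this, take the lift of $\phi_n(x_0)$ in the evenly covered neighborhood of $\widetilde\phi(\tilde x_0)$; this is possible because any continuous map homotopic to the identity admits lifts through any prescribed point of the fibre. Lemma~\ref{lem:stability_lifts} with $K=\overline D$ then gives $\widetilde\phi_n\to\widetilde\phi$ uniformly on $\overline D$. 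The set $K_1'=\bigcup_n\widetilde\phi_n(\overline D)\cup\widetilde\phi(\overline D)$ lies in a fixed compact neighbourhood of $\widetilde\phi(\overline D)$. On that set, $\widetilde{\mathcal A}$ is Lipschitz, since its differential is $(\pi^*h_1,\dots,\pi^*h_{2g})$, which is bounded. Hence
\[
\sup_{\tilde x\in \overline D}\bigl\|\widetilde{\mathcal A}(\widetilde\phi_n(\tilde x))-\widetilde{\mathcal A}(\widetilde\phi(\tilde x))\bigr\|\le C\,\sup_{\overline D} d(\widetilde\phi_n,\widetilde\phi)\longrightarrow0 .
\]

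\textbf{Step 2 (the flux part).} The function $\widetilde{\mathbf c}(\phi)$ is the $\operatorname{Vol}$-average over $D$ of $\widetilde{\mathcal A}\circ\widetilde\phi-\widetilde{\mathcal A}$. Uniform convergence on $\overline D$ of the integrands, which follows from Step~1, therefore gives $\widetilde{\mathbf c}(\phi_n)\to\widetilde{\mathbf c}(\phi)$, with the estimate $\|\widetilde{\mathbf c}(\phi_n)-\widetilde{\mathbf c}(\phi)\|\le C\sup_{\overline D}d(\widetilde\phi_n,\widetilde\phi)$. Subtracting, we obtain $\sup_{\overline D}\|\widetilde{\mathcal F}(\phi_n)-\widetilde{\mathcal F}(\phi)\|\le 2C\sup_{\overline D}d(\widetilde\phi_n,\widetilde\phi)\to0$. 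This concludes the proof. Working with sequences is legitimate because the $C^0$ topology is metrizable.

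\textbf{Main obstacle.} The delicate step is the first one. We must make sure the lift selection is consistent with the definition of $\widetilde{\mathbf c}$. The definition uses a lift obtained from an isotopy, whereas our chosen $\widetilde\phi_n$ may differ from it by a deck transformation. So we need Lemma~\ref{lem:AJ_decomposition}(2) applied to arbitrary lifts. Concretely, replacing $\widetilde\phi$ by $\gamma\circ\widetilde\phi$ shifts $\widetilde{\mathcal A}\circ\widetilde\phi$ by the period vector of $\gamma$. This changes both the average and the pointwise displacement by the same constant, so $\widetilde{\mathcal F}$ is unchanged.

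If $\widetilde{\mathbf c}$ itself is only well defined modulo $\Gamma$ along this route, I would avoid any dependence on it. The alternative is to work directly with $\widetilde{\mathcal F}$, which is lift-independent by the computation above. That is all the argument needs, since the norm only involves $\mathcal F$. A constant shift of the lift affects neither $\mathcal F$ nor its zero-mean normalization.
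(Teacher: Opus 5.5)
Your proposal is correct and follows the paper's proof essentially step for step: you normalize the lifts at a basepoint $\tilde x_0\in D$, apply Lemma~\ref{lem:stability_lifts} on $\overline D$, deduce uniform convergence of $\widetilde{\mathcal A}\circ\widetilde\phi_n$ and hence of $\widetilde{\mathbf c}(\phi_n)$ and $\widetilde{\mathcal F}(\phi_n)$, and finish with the reverse triangle inequality. Your extra remark is a genuine improvement in care: replacing a lift by $\gamma\circ\widetilde\phi$ shifts $\widetilde{\mathbf c}$ by a period vector but leaves $\widetilde{\mathcal F}$ unchanged. The paper instead relies on Lemma~\ref{lem:AJ_decomposition}(2), which holds only modulo $\Gamma$ for arbitrary lifts, whereas the norm needs only $\widetilde{\mathcal F}$.
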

	
	\begin{proof}
		Let $\phi_n\xrightarrow{C^0}\phi$ in $G_\omega(\Sigma_g)$. Fix a fundamental
		domain $D\subset\widetilde{\Sigma}_g$ with compact closure and choose a
		base-point $\tilde x_0\in D.$ Let $\widetilde\phi$ be a lift of $\phi$. For each
		sufficiently large $n$, choose the lift $\widetilde\phi_n$ normalized so that
		$\widetilde\phi_n(\tilde x_0) \longrightarrow \widetilde\phi(\tilde x_0).$ By
		Lemma~\ref{lem:stability_lifts}, $\widetilde\phi_n \longrightarrow
		\widetilde\phi$ uniformly on $\overline D$.	Since the lifted Abel--Jacobi map $\widetilde{\mathcal A}: \widetilde{\Sigma}_g
		\longrightarrow \mathbb R^{2g}$ is smooth, it is uniformly continuous on every
		compact subset. Therefore, $\widetilde{\mathcal A}\circ\widetilde\phi_n
		\longrightarrow \widetilde{\mathcal A}\circ\widetilde\phi$ uniformly on
		$\overline D$. By definition,
		\[
		\widetilde{\mathbf c}(\phi_n)
		=
		\frac1{\operatorname{Vol}(\Sigma_g)}
		\int_D
		\left(
		\widetilde{\mathcal A}(\widetilde\phi_n(\tilde x))
		-
		\widetilde{\mathcal A}(\tilde x)
		\right)
		\,d\operatorname{Vol}.
		\]
		Hence, $\widetilde{\mathbf c}(\phi_n) \longrightarrow \widetilde{\mathbf
			c}(\phi),$ since uniform convergence of the integrands implies convergence of
		their averages. Now, 
		\[
		\widetilde{\mathcal F}(\phi_n) = \widetilde{\mathcal A}\circ\widetilde\phi_n -
		\widetilde{\mathcal A} - \widetilde{\mathbf c}(\phi_n).
		\]
		The first term converges uniformly on $\overline D$, the second is fixed, and
		the third converges in $\mathbb R^{2g}$. Therefore, $\widetilde{\mathcal
			F}(\phi_n) \longrightarrow \widetilde{\mathcal F}(\phi)$ uniformly on $\overline
		D$. Since each $\widetilde{\mathcal F}(\phi_n)$ is $\Gamma$-invariant, it
		descends to the fluctuation $\mathcal F(\phi_n) \in C^0(\Sigma_g;\mathbb
		R^{2g}),$ and the above convergence is equivalent to uniform convergence on
		$\Sigma_g$: $\mathcal F(\phi_n) \longrightarrow \mathcal F(\phi).$ Finally,
		\[
		\left| \|\phi_n\|_{\mathrm{harm}} - \|\phi\|_{\mathrm{harm}} \right|
		=
		\left| \|\mathcal F(\phi_n)\|_{C^0} - \|\mathcal F(\phi)\|_{C^0} \right| \le
		\|\mathcal F(\phi_n)-\mathcal F(\phi)\|_{C^0},
		\]
		by the reverse triangle inequality for the supremum norm. Since the right-hand
		side tends to zero, we conclude that $\|\phi_n\|_{\mathrm{harm}} \longrightarrow
		\|\phi\|_{\mathrm{harm}}.$ Thus the harmonic norm is continuous with respect to
		the $C^0$ topology.
	\end{proof}
	
	\begin{remark}
		\label{rem:universal_cover_domain}
		In the proof of Theorem~\ref{thm:C0continuity} we use the universal cover
		$\widetilde{\Sigma}_g$ of the closed surface $\Sigma_g$ and a fundamental domain
		$D\subset\widetilde{\Sigma}_g$ for the action of the deck transformation group
		$\Gamma\cong\pi_1(\Sigma_g)$. The fundamental domain is chosen to have compact
		closure and to be connected; its $\Gamma$-translates tile $\widetilde{\Sigma}_g$,
		and the projection $\pi:\widetilde{\Sigma}_g\to\Sigma_g$ restricts to a
		bijection between the interior of $D$ and $\Sigma_g$ up to a set of measure
		zero. Compactness of $\overline{D}$ guarantees that integrals over $D$ are
		finite and that locally uniform convergence on $\widetilde{\Sigma}_g$ implies
		uniform convergence on $\overline{D}$, which is used to pass limits through the
		averaging formula for the constant term $\widetilde{\mathbf c}(\phi)$. The
		base-point $\tilde x_0$ is chosen inside $D$ so that Lemma~\ref{lem:stability_lifts}
		controls the lifts on the whole domain.
	\end{remark}
	
	Here is a consequence of Proposition \ref{prop:lift_characterization} and Theorem
	\ref{thm:C0continuity}. 
	
	\begin{theorem}[Closedness of the Abel--Jacobi kernel]
		\label{thm:closed_AJ_kernel}
		Let $(\Sigma_g,\omega)$ be a closed oriented surface of genus $g\ge2$. Then
		$\Ham(\Sigma_g,\omega)$ is a $C^0$-closed subgroup of $G_\omega(\Sigma_g)$.
	\end{theorem}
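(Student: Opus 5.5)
The plan is to realize $\Ham(\Sigma_g,\omega)$ as the zero set of the map $\widetilde{\mathbf c}:G_\omega(\Sigma_g)\to\mathbb R^{2g}$. By Proposition~\ref{prop:lift_characterization}, $\phi\in\Ham(\Sigma_g,\omega)$ if and only if $\widetilde{\mathbf c}(\phi)=0$. Hence
\[
\Ham(\Sigma_g,\omega)=\widetilde{\mathbf c}^{-1}(0).
\]
It therefore suffices to show that $\widetilde{\mathbf c}$ is continuous for the $C^0$-topology. The preimage of the closed set $\{0\}$ is then $C^0$-closed in $G_\omega(\Sigma_g)$. That $\Ham$ is a subgroup is already known, or can be read off from the homomorphism property in Lemma~\ref{lem:AJ_decomposition}-(5).

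For continuity I would reuse the argument in the proof of Theorem~\ref{thm:C0continuity}, rather than the continuity of the norm itself. The norm only controls the fluctuation $\mathcal F$, not the flux vector. Take $\phi_n\to\phi$ in $C^0$, fix a lift $\widetilde\phi$, and normalize lifts $\widetilde\phi_n$ with $\widetilde\phi_n(\tilde x_0)\to\widetilde\phi(\tilde x_0)$. Lemma~\ref{lem:stability_lifts} then gives uniform convergence on $\overline D$. Averaging $\widetilde{\mathcal A}\circ\widetilde\phi_n-\widetilde{\mathcal A}$ over $D$ yields $\widetilde{\mathbf c}(\phi_n)\to\widetilde{\mathbf c}(\phi)$. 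This step appears verbatim inside the proof of Theorem~\ref{thm:C0continuity}, so I would cite it. Lemma~\ref{lem:AJ_decomposition}-(2),(4) says $\widetilde{\mathbf c}$ does not depend on the choice of lift, so the normalization is harmless.

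Finally, let $\phi_n\in\Ham$ with $\phi_n\to\phi\in G_\omega(\Sigma_g)$. Then $\widetilde{\mathbf c}(\phi)=\lim\widetilde{\mathbf c}(\phi_n)=0$, and Proposition~\ref{prop:lift_characterization} gives $\phi\in\Ham$. Because we work inside $G_\omega(\Sigma_g)$, which is the setting of the statement, no $C^0$-limit leaves the smooth category.

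The main obstacle is the lift-independence claim. For general continuous maps, different lifts differ by deck transformations. These shift the average of $\widetilde{\mathcal A}\circ\widetilde\phi-\widetilde{\mathcal A}$ by period vectors, so a priori only $\mathbf c(\phi)\in\operatorname{Jac}(\Sigma_g)$ is canonical. I would handle this by working with the specific lift $\widetilde\phi$ obtained as the endpoint of the lifted isotopy, for which formula (K) identifies $\widetilde{\mathbf c}$ with the flux. Convergence then only needs to hold modulo $\Gamma$: the limit $\mathbf c(\phi)=0$ in the Jacobian. The flux of $\phi$ then lies in a discrete lattice while depending continuously along the approximating isotopies. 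If this is shaky, I would conclude in the Jacobian and use that a flux class mapping to $0\in\operatorname{Jac}$ near $0$ must vanish, since $\Gamma$ is discrete.
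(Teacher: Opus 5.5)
Your strategy matches the paper's: $\Ham(\Sigma_g,\omega)=\widetilde{\mathbf c}^{-1}(0)$ via Proposition~\ref{prop:lift_characterization}, $C^0$-continuity of $\widetilde{\mathbf c}$ taken from the proof of Theorem~\ref{thm:C0continuity}, then a limit. You are right to distrust Lemma~\ref{lem:AJ_decomposition}(2). Replacing $\widetilde\phi$ by $\gamma\circ\widetilde\phi$ shifts $\widetilde{\mathbf c}(\phi)$ by the period vector $P(\gamma)$. The paper's proof relies on that claim without saying so when it writes $\widetilde{\mathbf c}(\phi_n)=0$ for the normalized lifts, and your repair does not remove the dependence.

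Formula (K) gives $\widetilde{\mathbf c}(\phi_n)=0$ only for the isotopy lift $\widehat\phi_n$. Convergence, however, is known only for the normalized lift $\widetilde\phi_n=\gamma_n\circ\widehat\phi_n$. So you obtain only $P(\gamma_n)\to\widetilde{\mathbf c}(\phi)$, i.e.\ $\widetilde{\mathbf c}(\phi)\in\Gamma$, i.e.\ $\mathbf c(\phi)=0$ in $\Jac(\Sigma_g)$. This does not give $\phi\in\Ham(\Sigma_g,\omega)$. Flux is onto $H^1(\Sigma_g;\mathbb R)$, so some non-Hamiltonian $\phi$ have isotopy-lift vector $\widetilde{\mathbf c}(\phi)$ equal to a nonzero lattice point. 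Your fallback (``a class mapping to $0$ near $0$ must vanish'') assumes the flux of $\phi$ is small, which is exactly what must be proved. Since $\phi_n\to\phi$ only in $C^0$, there is no family of isotopies along which the flux varies continuously.

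The missing step is to show $\gamma_n=1$ for large $n$, using that $\pi_1(\Sigma_g)$ has trivial centre for $g\ge2$.
\begin{enumerate}
\item The isotopy lift of any $\psi\in G_\omega(\Sigma_g)$ commutes with every deck transformation, because $t\mapsto\widetilde\psi_t\delta\widetilde\psi_t^{-1}$ is a continuous family in the discrete deck group. Any other lift satisfies $(\gamma\widetilde\psi)\delta=(\gamma\delta\gamma^{-1})(\gamma\widetilde\psi)$. Hence the isotopy lift is the unique deck-commuting lift.
\item Take $\widetilde\phi$ to be the isotopy lift of $\phi$ and fix a generator $\delta$. Lemma~\ref{lem:stability_lifts}, applied to a path from $\tilde x_0$ to $\delta\tilde x_0$, gives $\widetilde\phi_n(\delta\tilde x_0)\to\delta\widetilde\phi(\tilde x_0)$.
\item On the other hand, $\widetilde\phi_n(\delta\tilde x_0)=(\gamma_n\delta\gamma_n^{-1})\widetilde\phi_n(\tilde x_0)$ with $\widetilde\phi_n(\tilde x_0)\to\widetilde\phi(\tilde x_0)$. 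Deck transformations are isometries acting freely and properly discontinuously, so $\gamma_n\delta\gamma_n^{-1}=\delta$ for large $n$.
\item Doing this for a finite generating set makes $\gamma_n$ central, hence trivial.
\end{enumerate}
Then $\widetilde{\mathbf c}(\phi_n)=0$ for the normalized lifts, convergence holds in $\mathbb R^{2g}$ rather than modulo $\Gamma$, and $\widetilde{\mathbf c}(\phi)=0$ follows. The same canonical-lift convention should replace Lemma~\ref{lem:AJ_decomposition}(2) throughout.
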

	
	\begin{proof}
		Proposition \ref{prop:lift_characterization} states that a symplectic
		diffeomorphism $\phi$ is Hamiltonian if and only if $\widetilde{\mathbf c}(\phi)
		= 0$. In the proof of Theorem~\ref{thm:C0continuity} we showed that the map
		$\phi\mapsto \mathbf c(\phi)$ is continuous with respect to the $C^0$-topology
		on $G_\omega(\Sigma_g)$ and the usual topology on $\mathbb R^{2g}$. Indeed, for
		any sequence $\phi_n\xrightarrow{C^0}\phi$, the lifts were chosen so that
		$\widetilde{\mathbf c}(\phi_n) \to\widetilde{\mathbf c}(\phi)$.	Now let $\{\phi_n\}\subset\Ham(\Sigma_g,\omega)$ be a sequence converging
		uniformly to $\phi\in G_\omega(\Sigma_g)$. By the continuity of the constant
		term,
		\[
		\widetilde{\mathbf c}(\phi) = \lim_{n\to\infty}\widetilde{\mathbf c}(\phi_n).
		\]
		Since each $\phi_n$ is Hamiltonian, Proposition \ref{prop:lift_characterization}
		gives $\widetilde{\mathbf c}(\phi_n)=0$ for all $n$. Hence $\widetilde{\mathbf
			c}(\phi)=0$, and applying the same proposition again yields
		$\phi\in\Ham(\Sigma_g,\omega)$. Thus $\Ham(\Sigma_g,\omega)$ is sequentially
		closed, and because $G_\omega(\Sigma_g)$ is metrizable, it is closed.
	\end{proof}
	
	\begin{theorem}[Local Injectivity of the Harmonic Coordinate Map]
		\label{thm:flux_coordinate_map}
		Let $(\Sigma_g,\omega)$ be a closed oriented surface of genus $g\ge2$. There
		exists a $C^0$-neighborhood $\mathcal U$ of the identity in $G_\omega(\Sigma_g)$
		such that the map
		\[
		\mathcal F(\phi) = \mathcal A\circ\phi-\mathcal A-\mathbf c(\phi),
		\]
		where $\mathbf c(\phi) = \widetilde{\mathbf c}(\phi)\bmod\Gamma \in\Jac(\Sigma_g),$
		is well defined, continuous, and injective on $\mathcal U$.
	\end{theorem}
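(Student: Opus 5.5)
The plan is to reduce injectivity of $\mathcal F$ to the translation rigidity of Proposition~\ref{prop:aj_rigidity}, in the same way as in the proof of Theorem~\ref{thm:non_deg_full}. The $C^0$-neighborhood $\mathcal U$ is needed only to make $\mathcal F$ canonically $\mathbb R^{2g}$-valued (rather than $\operatorname{Jac}$-valued) and continuous. The injectivity argument itself should work on all of $G_\omega(\Sigma_g)$.

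\emph{Step 1 (well-definedness).} Let $r>0$ be smaller than the injectivity radius and fix a compact, connected fundamental domain $\overline D$ as in Remark~\ref{rem:universal_cover_domain}. Take
\[
\mathcal U=\{\phi : \sup_x d(\phi(x),x)<r\}.
\]
For $\phi\in\mathcal U$ there is a distinguished lift $\widetilde\phi$: the one displacing each $\tilde x$ by less than $r$, obtained by lifting short geodesics as in Lemma~\ref{lem:stability_lifts}. With this lift, $\widetilde{\mathcal A}\circ\widetilde\phi-\widetilde{\mathcal A}$ is a genuine bounded $\mathbb R^{2g}$-valued function. Lemma~\ref{lem:AJ_decomposition} then produces $\widetilde{\mathbf c}(\phi)$ and $\widetilde{\mathcal F}(\phi)$, and Proposition~\ref{DECK} descends $\widetilde{\mathcal F}(\phi)$ to $\mathcal F(\phi)\in C^0_0(\Sigma_g;\mathbb R^{2g})$. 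This agrees with the $\operatorname{Jac}$-valued formula $\mathcal A\circ\phi-\mathcal A-\mathbf c(\phi)$ reduced mod $\Gamma$. By Lemma~\ref{lem:AJ_decomposition}(2) the value does not depend on the lift anyway; the neighborhood simply makes the choice canonical.

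\emph{Step 2 (continuity).} For $\phi_n\to\phi$ in $\mathcal U$, the distinguished lifts satisfy $\widetilde\phi_n(\tilde x_0)\to\widetilde\phi(\tilde x_0)$. The argument in the proof of Theorem~\ref{thm:C0continuity} then applies verbatim: Lemma~\ref{lem:stability_lifts} gives uniform convergence of the lifts on $\overline D$, hence convergence of $\widetilde{\mathbf c}(\phi_n)$, hence $\mathcal F(\phi_n)\to\mathcal F(\phi)$ in $C^0$.

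\emph{Step 3 (injectivity).} Suppose $\phi,\psi\in\mathcal U$ and $\mathcal F(\phi)=\mathcal F(\psi)$.
\begin{itemize}
\item Subtracting the two decompositions gives
\[
\mathcal A(\phi(x))-\mathcal A(\psi(x))=v,\qquad v:=\mathbf c(\phi)-\mathbf c(\psi)\in\operatorname{Jac}(\Sigma_g),
\]
for every $x\in\Sigma_g$.
\item Substituting $x=\psi^{-1}(y)$, which is legitimate because $\psi$ is a diffeomorphism, yields $\mathcal A(\eta(y))=\mathcal A(y)+v$ for all $y$, where $\eta:=\phi\circ\psi^{-1}\in G_\omega(\Sigma_g)$.
\item Since $\eta$ is surjective, $\mathcal A(\Sigma_g)+v=\mathcal A(\Sigma_g)$, so Proposition~\ref{prop:aj_rigidity} forces $v=0$.
\item Then $\mathcal A\circ\eta=\mathcal A$, and injectivity of the Abel--Jacobi embedding gives $\eta=\mathrm{id}$, that is, $\phi=\psi$.
\end{itemize}
Equivalently, one can check $\widetilde{\mathbf c}(\eta)=\widetilde{\mathbf c}(\phi)-\widetilde{\mathbf c}(\psi)$ via Lemma~\ref{lem:AJ_decomposition}(5). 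Then $\mathcal F(\eta)=0$, and Theorem~\ref{thm:non_deg_full} applies directly.

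The main obstacle I expect is bookkeeping between the $\mathbb R^{2g}$-valued and $\operatorname{Jac}$-valued objects in Step 3. The equality $\mathcal F(\phi)=\mathcal F(\psi)$ lives in $\mathbb R^{2g}$, but the substitution $x=\psi^{-1}(y)$ is only meaningful after projecting to $\operatorname{Jac}(\Sigma_g)$, since lifts of $\psi^{-1}$ and $\phi$ need not compose to the distinguished lift of $\eta$. I would therefore carry out Step 3 entirely mod $\Gamma$, where lift ambiguities disappear, and use the $\mathbb R^{2g}$ structure only in Steps 1--2. Rigidity (Proposition~\ref{prop:aj_rigidity}) is a statement in $\operatorname{Jac}$, so nothing is lost by working there.
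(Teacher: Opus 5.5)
Your proposal is correct and follows the paper's proof essentially step for step. Well-definedness comes from descending $\widetilde{\mathcal F}(\phi)$ via Proposition~\ref{DECK}. Continuity comes from Lemma~\ref{lem:stability_lifts}, exactly as in Theorem~\ref{thm:C0continuity}. Injectivity comes from passing to $\operatorname{Jac}(\Sigma_g)$, applying the translation rigidity of Proposition~\ref{prop:aj_rigidity}, and then using the injectivity of $\mathcal A$.

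The only real difference is that the paper also shrinks $\mathcal U$ so that $\widetilde{\mathbf c}(\mathcal U)\subset B(0,\lambda_\Gamma/4)$, in order to upgrade $\widetilde{\mathbf c}(\phi)-\widetilde{\mathbf c}(\psi)\in\Gamma$ to equality. As you correctly observe, this step is superfluous: $q(\widetilde{\mathbf c}(\phi)-\widetilde{\mathbf c}(\psi))=0$ already gives $\mathcal A\circ\phi=\mathcal A\circ\psi$. So injectivity in fact holds on all of $G_\omega(\Sigma_g)$, and the neighborhood matters only for fixing a canonical lift.
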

	
	\begin{proof}
		Fix a lift $\widetilde{\mathcal A}:\widetilde{\Sigma}_g\longrightarrow \mathbb
		R^{2g}$ of the Abel--Jacobi map. For $\phi\in G_\omega(\Sigma_g)$, let
		$\widetilde\phi$ be the normalized lift used in the definition of $\widetilde{\mathbf
			c}(\phi)$. Define the lifted harmonic displacement by
		\[
		\widetilde{\mathcal F}(\phi)(\tilde x) = \widetilde{\mathcal
			A}(\widetilde\phi(\tilde x)) - \widetilde{\mathcal A}(\tilde x) -
		\widetilde{\mathbf c}(\phi).
		\]
		By Lemma~\ref{lem:AJ_decomposition}, $\int_D\widetilde{\mathcal
			F}(\phi)\,d\operatorname{Vol}=0.$ Moreover, by Proposition \ref{DECK}
		$\widetilde{\mathcal F}(\phi)$ descends to a well-defined map $\mathcal
		F(\phi):\Sigma_g\longrightarrow\mathbb R^{2g}.$ The continuity of $\mathcal F$
		follows from the $C^0$-continuity of $\widetilde{\mathbf c}$ and the stability
		of normalized lifts: if $\phi_n\longrightarrow\phi$ in the $C^0$ topology, then
		$\widetilde\phi_n\longrightarrow\widetilde\phi$ uniformly on a fundamental
		domain. Since $\widetilde{\mathcal A}$ is smooth, $\widetilde{\mathcal
			A}\circ\widetilde\phi_n \longrightarrow \widetilde{\mathcal A}\circ\widetilde\phi$
		uniformly. Hence, $\widetilde{\mathcal F}(\phi_n) \longrightarrow
		\widetilde{\mathcal F}(\phi),$ and therefore $\mathcal F(\phi_n)\longrightarrow
		\mathcal F(\phi)$ in $C^0(\Sigma_g;\mathbb R^{2g})$.	We now prove injectivity. Suppose that $\mathcal F(\phi)=\mathcal F(\psi)$ for
		$\phi,\psi\in\mathcal U$. Then, on the universal cover,
		\[
		\widetilde{\mathcal A}(\widetilde\phi(\tilde x)) -
		\widetilde{\mathcal A}(\widetilde\psi(\tilde x)) = \widetilde{\mathbf c}(\phi)
		- \widetilde{\mathbf c}(\psi).
		\]
		Passing to the quotient by the period lattice gives
		\[
		\mathcal A(\phi(x)) = \mathcal A(\psi(x)) + q(\widetilde{\mathbf c}(\phi) -
		\widetilde{\mathbf c}(\psi)),
		\]
		where $q:\mathbb R^{2g}\rightarrow\mathbb R^{2g}/\Gamma$ is the quotient map.
		Since $\phi$ and $\psi$ are diffeomorphisms, $\phi(\Sigma_g)=\psi(\Sigma_g)=\Sigma_g.$
		Therefore,
		\[
		\mathcal A(\Sigma_g) = \mathcal A(\Sigma_g) + q(\widetilde{\mathbf c}(\phi) -
		\widetilde{\mathbf c}(\psi)).
		\]
		By the rigidity of the Abel--Jacobi image, $q(\widetilde{\mathbf c}(\phi) -
		\widetilde{\mathbf c}(\psi)) =0.$ Hence, $\widetilde{\mathbf c}(\phi) -
		\widetilde{\mathbf c}(\psi)\in\Gamma.$ Let $\lambda_\Gamma =
		\inf_{\gamma\in\Gamma\setminus\{0\}} \|\gamma\|>0.$ Because $\widetilde{\mathbf
			c}(\mathrm{id})=0$ and $\widetilde{\mathbf c}$ is continuous, we may shrink
		$\mathcal U$ so that $\widetilde{\mathbf c}(\mathcal U) \subset
		B(0,\lambda_\Gamma/4).$ Then for $\phi,\psi\in\mathcal U$,
		\[
		\|\widetilde{\mathbf c}(\phi) - \widetilde{\mathbf c}(\psi)\| <
		\lambda_\Gamma/2.
		\]
		The only lattice element with norm smaller than $\lambda_\Gamma$ is zero.
		Consequently, $\widetilde{\mathbf c}(\phi) = \widetilde{\mathbf c}(\psi).$ The
		equality of harmonic coordinates now gives $\mathcal A\circ\phi = \mathcal
		A\circ\psi.$ Since $\mathcal A$ is an embedding for $g\ge2$,
		$\phi(x)=\psi(x)$ for every $x\in\Sigma_g$. Therefore, $\phi=\psi.$ Thus
		$\mathcal F$ is injective on $\mathcal U$.
	\end{proof}
	
	\subsection{Fixed Points and the Harmonic Primitive}\label{subsec:fixed_points}
	
	The harmonic decomposition established in Lemma~\ref{lem:AJ_decomposition}
	provides a direct correspondence between the dynamics of a symplectomorphism and
	the geometry of its Abel--Jacobi displacement. In this section we show that, for
	surfaces of genus $g\geq 2$, the fixed point problem can be rewritten as a
	nonlinear level-set problem for the harmonic primitive $\mathcal F(\phi)$.
	
	The resulting formulation separates the displacement of a symplectomorphism into
	two geometrically distinct components:
	\begin{itemize}
		\item the global translation component $\mathbf c(\phi)$, determined by the flux;
		\item the oscillatory component $\mathcal F(\phi)$, measuring the nonlinear
		fluctuation of the displacement.
	\end{itemize}
	A fixed point exists precisely when the oscillatory part compensates for the
	global translation.
	
	\subsubsection{The Harmonic Fixed Point Equation}
	
	Let $\mathcal A:\Sigma_g\longrightarrow \Jac(\Sigma_g)$ be the Abel--Jacobi
	embedding associated with an $L^2$-orthonormal basis of harmonic one-forms.
	Since $g\geq 2$, the Abel--Jacobi map is an embedding. For every $\phi\in
	G_\omega(\Sigma_g)$, Lemma~\ref{lem:AJ_decomposition} gives the decomposition
	\[
	\mathcal A(\phi(x))-\mathcal A(x) = \mathbf c(\phi)+\mathcal F(\phi)(x),
	\]
	in the Jacobian torus. Because $\mathcal A$ is injective, we obtain the
	following equivalence.
	
	\begin{proposition}[Fixed point equation in harmonic coordinates]
		\label{prop:harmonic_fixed_point}
		Let $\Sigma_g$ be a closed surface of genus $g\geq2$. For every symplectomorphism
		$\phi\in G_\omega(\Sigma_g)$, $x\in\operatorname{Fix}(\phi)$ if and only if
		$\mathcal F(\phi)(x)+\mathbf c(\phi)=0$ in $\Jac(\Sigma_g)$. Equivalently,
		\[
		\operatorname{Fix}(\phi) = \left(\mathcal F(\phi)+\mathbf
		c(\phi)\right)^{-1}(0).
		\]
	\end{proposition}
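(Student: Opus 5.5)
The proof is a direct translation of the decomposition of Lemma~\ref{lem:AJ_decomposition} into the Jacobian, combined with injectivity of $\mathcal A$. The plan has three steps.

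\emph{Step 1: pass to the Jacobian.} Fix $x\in\Sigma_g$ and a lift $\tilde x$, and project the lifted identity
\[
\widetilde{\mathcal A}(\widetilde\phi(\tilde x))-\widetilde{\mathcal A}(\tilde x)=\widetilde{\mathbf c}(\phi)+\widetilde{\mathcal F}(\phi)(\tilde x)
\]
by the quotient map $q:\mathbb R^{2g}\to\Jac(\Sigma_g)$. Since $\pi(\widetilde\phi(\tilde x))=\phi(x)$ and $q\circ\widetilde{\mathcal A}=\mathcal A\circ\pi$, the left side becomes $\mathcal A(\phi(x))-\mathcal A(x)$. By Proposition~\ref{DECK}, $\widetilde{\mathcal F}(\phi)(\tilde x)=\mathcal F(\phi)(x)$, independently of the chosen lift $\tilde x$. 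By Lemma~\ref{lem:AJ_decomposition}(2)--(4), $\mathbf c(\phi)=q(\widetilde{\mathbf c}(\phi))$ depends only on $\phi$. This gives
\[
\mathcal A(\phi(x))-\mathcal A(x)=\mathbf c(\phi)+q(\mathcal F(\phi)(x))
\]
in $\Jac(\Sigma_g)$, and every term is independent of the choices of lift of $\phi$, of $x$, and of the isotopy.

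\emph{Step 2: the two implications.} If $\phi(x)=x$, the left side vanishes, so $q(\mathcal F(\phi)(x))+\mathbf c(\phi)=0$. Conversely, if $q(\mathcal F(\phi)(x))+\mathbf c(\phi)=0$, then $\mathcal A(\phi(x))=\mathcal A(x)$. The Abel--Jacobi map is injective for $g\ge2$, as recalled in the Preliminaries, so $\phi(x)=x$. The set-theoretic identity $\operatorname{Fix}(\phi)=(\mathcal F(\phi)+\mathbf c(\phi))^{-1}(0)$ is just this pointwise equivalence, once the map $x\mapsto q(\mathcal F(\phi)(x))+\mathbf c(\phi)$ is regarded as a continuous map $\Sigma_g\to\Jac(\Sigma_g)$.

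\emph{Main obstacle.} The only delicate point is the bookkeeping in Step~1. Equality holds only modulo $\Gamma$: a fixed point $x$ need not satisfy $\widetilde{\mathcal F}(\phi)(\tilde x)+\widetilde{\mathbf c}(\phi)=0$ in $\mathbb R^{2g}$, because $\widetilde\phi(\tilde x)$ may equal $\gamma\tilde x$ for a deck transformation $\gamma\neq 1$. The lifted expression then equals the period vector of $\gamma$, which is why the statement is made in $\Jac(\Sigma_g)$. I would state this explicitly and check that it is consistent with the equivariance $\widetilde{\mathcal A}(\gamma\tilde x)=\widetilde{\mathcal A}(\tilde x)+\gamma$ used in Proposition~\ref{DECK}.
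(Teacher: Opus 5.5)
Your proposal is correct and follows the paper's own argument: project the decomposition of Lemma~\ref{lem:AJ_decomposition} to $\Jac(\Sigma_g)$ via $q$, then use injectivity of $\mathcal A$. Your remark that the identity holds only modulo $\Gamma$ (a fixed point may satisfy $\widetilde\phi(\tilde x)=\gamma\tilde x$ with $\gamma\neq1$) makes precise something the paper leaves implicit. Also note that the only lift-independence you actually need from Lemma~\ref{lem:AJ_decomposition} is that of $q(\widetilde{\mathbf c}(\phi))$. That holds even though $\widetilde{\mathbf c}(\phi)$ itself shifts by a period vector when $\widetilde\phi$ is replaced by $\gamma\circ\widetilde\phi$.
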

	
	Thus, the fixed point problem becomes a global level-set problem for the
	harmonic primitive.
	
	Now, suppose that $\phi\in\Ham(\Sigma_g,\omega).$ The flux class vanishes, hence
	$\mathbf c(\phi)=0.$ The fixed point equation reduces to $\mathcal
	F(\phi)(x)=0.$ Therefore the fixed points of a Hamiltonian diffeomorphism
	correspond to the zero set of the nonlinear map
	\[
	\mathcal F(\phi): \Sigma_g\longrightarrow \mathbb R^{2g}.
	\]
	
	For a non-degenerate Hamiltonian diffeomorphism, the Arnold conjecture on closed
	surfaces states that $\#\operatorname{Fix}(\phi)\geq 2g+2.$ In harmonic
	coordinates this can be interpreted as the statement that the zero level of the
	harmonic primitive must contain at least $2g+2$ points counted with multiplicity:
	\[
	\#\{x\in\Sigma_g:\, \mathcal F(\phi)(x)=0\} \geq 2g+2.
	\]
	
	\begin{remark}
		The harmonic chart does not replace Floer-theoretic methods in the proof of
		Arnold's theorem. Rather, it gives a geometric reformulation: the topology of
		the surface imposes restrictions on the zero set of the harmonic displacement
		field. This is a reinterpretation, not a new proof, of the Arnold conjecture.
	\end{remark}
	
	\begin{remark}
		The components of $\mathcal F(\phi) = (\mathcal F_1,\ldots,\mathcal F_{2g})$
		are not independent functions. They arise from the symplectic deformation
		equation defining the harmonic chart and therefore satisfy strong compatibility
		relations. The Arnold lower bound may thus be viewed as a topological constraint
		on the zero set of this nonlinear harmonic coordinate map.
	\end{remark}
	
	\subsubsection{Quantitative Obstruction to Fixed Points}
	
	To formulate a quantitative obstruction, we work in the universal cover of the
	Jacobian: $\widetilde{\Jac}(\Sigma_g)=\mathbb R^{2g}.$ Let $\widetilde{\mathbf
		c}(\phi)\in\mathbb R^{2g}$ be a lift of the flux translation and let
	$\widetilde{\mathcal F}(\phi): \Sigma_g\longrightarrow\mathbb R^{2g}$ be the
	corresponding lifted harmonic primitive. Define the flux displacement size by
	\[
	D_{\mathrm{flux}}(\phi) = \operatorname{dist} (\widetilde{\mathbf c}(\phi),\Gamma),
	\]
	where $\Gamma\subset\mathbb R^{2g}$ is the period lattice. Next, define the
	harmonic oscillation by
	\[
	D_{\mathrm{osc}}(\phi) = \sup_{x\in\Sigma_g} \left| \widetilde{\mathcal
		F}(\phi)(x) \right|.
	\]
	
	\begin{proposition}[Quantitative obstruction to fixed points]
		\label{prop:fixed_point_obstruction}
		Let $\phi\in G_\omega(\Sigma_g)$. If $D_{\mathrm{osc}}(\phi) <
		D_{\mathrm{flux}}(\phi),$ then $\phi$ has no fixed points.
	\end{proposition}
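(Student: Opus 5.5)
We argue by contraposition, combining Proposition~\ref{prop:harmonic_fixed_point} with an elementary distance estimate in the universal cover $\mathbb R^{2g}$ of the Jacobian. Suppose $x\in\operatorname{Fix}(\phi)$. By Proposition~\ref{prop:harmonic_fixed_point}, $\mathcal F(\phi)(x)+\mathbf c(\phi)=0$ in $\Jac(\Sigma_g)$.

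To lift this to $\mathbb R^{2g}$, recall from Proposition~\ref{DECK} that $\widetilde{\mathcal F}(\phi)$ is deck-invariant and descends to the genuinely $\mathbb R^{2g}$-valued function $\mathcal F(\phi)$. The class $\mathbf c(\phi)$ is the reduction of $\widetilde{\mathbf c}(\phi)$ modulo $\Gamma$ by Lemma~\ref{lem:AJ_decomposition}(3)--(4). The vanishing in $\Jac(\Sigma_g)=\mathbb R^{2g}/\Gamma$ therefore means there is a lattice vector $\gamma\in\Gamma$ with
\[
\widetilde{\mathcal F}(\phi)(x)+\widetilde{\mathbf c}(\phi)=\gamma .
\]
A cleaner route to the same identity is to evaluate the decomposition of Lemma~\ref{lem:AJ_decomposition} at a lift $\tilde x$. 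Since $\pi(\widetilde\phi(\tilde x))=\phi(x)=x$, the point $\widetilde\phi(\tilde x)$ is a deck translate of $\tilde x$. By the equivariance $\widetilde{\mathcal A}(\gamma\tilde x)=\widetilde{\mathcal A}(\tilde x)+\gamma$ used in the proof of Proposition~\ref{DECK}, the left-hand side of the decomposition is then a period vector.

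The estimate follows immediately:
\[
D_{\mathrm{flux}}(\phi)=\operatorname{dist}(\widetilde{\mathbf c}(\phi),\Gamma)\le \|\widetilde{\mathbf c}(\phi)-\gamma\|=\|\widetilde{\mathcal F}(\phi)(x)\|\le D_{\mathrm{osc}}(\phi).
\]
This contradicts the hypothesis $D_{\mathrm{osc}}(\phi)<D_{\mathrm{flux}}(\phi)$, so $\operatorname{Fix}(\phi)=\emptyset$.

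The main point needing care is the bookkeeping between the torus and its cover. One must check that $D_{\mathrm{flux}}$ does not depend on which lift of $\mathbf c(\phi)$ is used. This holds because distance to $\Gamma$ is $\Gamma$-invariant, and in any case Lemma~\ref{lem:AJ_decomposition}(4) fixes the lift canonically. One must also check that $D_{\mathrm{osc}}$ is computed with the same normalized lift as in the decomposition, so that the displayed identity holds exactly with that $\widetilde{\mathcal F}(\phi)$. Once the identity $\widetilde{\mathcal F}(\phi)(x)+\widetilde{\mathbf c}(\phi)\in\Gamma$ at fixed points is established, the rest is the one-line triangle-type estimate above.
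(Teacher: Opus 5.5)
Your proof is correct and is essentially the paper's argument: at a fixed point $\widetilde{\mathcal F}(\phi)(x)+\widetilde{\mathbf c}(\phi)\in\Gamma$, so $D_{\mathrm{flux}}(\phi)\le\|\widetilde{\mathcal F}(\phi)(x)\|\le D_{\mathrm{osc}}(\phi)$, which contradicts the hypothesis. Your deck-translate justification of the lattice membership fills in a step the paper only asserts, and your lift-bookkeeping concern resolves automatically: replacing $\widetilde\phi$ by a deck translate changes $\widetilde{\mathbf c}(\phi)$ only by a period vector and leaves $\widetilde{\mathcal F}(\phi)$ unchanged.
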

	
	\begin{proof}
		Suppose that $x$ is a fixed point. Then $\widetilde{\mathcal F}(\phi)(x) +
		\widetilde{\mathbf c}(\phi) \in\Gamma.$ Hence for some $\gamma\in\Gamma$,
		$$|\gamma-\widetilde{\mathbf c}(\phi)| = |\widetilde{\mathcal F}(\phi)(x)|.$$
		Therefore, $$D_{\mathrm{flux}}(\phi) \le |\widetilde{\mathcal F}(\phi)(x)| \le
		D_{\mathrm{osc}}(\phi),$$ which contradicts the assumption.
	\end{proof}
	
	\begin{corollary}[Necessary harmonic cancellation condition]
		\label{cor:harmonic_cancellation}
		If $\operatorname{Fix}(\phi)\neq\varnothing,$ then $D_{\mathrm{flux}}(\phi) \le
		D_{\mathrm{osc}}(\phi).$ 
	\end{corollary}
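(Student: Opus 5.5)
This is the contrapositive of Proposition~\ref{prop:fixed_point_obstruction}, so the plan is to rerun that argument directly rather than invoke it formally. Assume $\operatorname{Fix}(\phi)\neq\varnothing$ and pick $x\in\operatorname{Fix}(\phi)$. Proposition~\ref{prop:harmonic_fixed_point} gives $\mathcal F(\phi)(x)+\mathbf c(\phi)=0$ in $\Jac(\Sigma_g)$. The first step is to lift this identity to $\mathbb R^{2g}$. Choose a lift $\tilde x$ of $x$. Since $\widetilde\phi(\tilde x)$ and $\tilde x$ both project to $x$, they differ by a deck transformation. The equivariance $\widetilde{\mathcal A}(\gamma\tilde x)=\widetilde{\mathcal A}(\tilde x)+\gamma$ used in Proposition~\ref{DECK} then gives $\widetilde{\mathcal A}(\widetilde\phi(\tilde x))-\widetilde{\mathcal A}(\tilde x)=\gamma$ for some $\gamma\in\Gamma$. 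By Lemma~\ref{lem:AJ_decomposition}(1), this reads
\[
\widetilde{\mathbf c}(\phi)+\widetilde{\mathcal F}(\phi)(\tilde x)=\gamma .
\]

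The second step is the metric estimate. From the identity above, $|\gamma-\widetilde{\mathbf c}(\phi)|=|\widetilde{\mathcal F}(\phi)(\tilde x)|$. The definition of $D_{\mathrm{flux}}$ as a distance to the lattice gives $D_{\mathrm{flux}}(\phi)\le|\gamma-\widetilde{\mathbf c}(\phi)|$. Since $\widetilde{\mathcal F}(\phi)$ descends to $\mathcal F(\phi)$ on $\Sigma_g$ (Proposition~\ref{DECK}), the value at $\tilde x$ is bounded by the supremum defining $D_{\mathrm{osc}}(\phi)$. Chaining these inequalities yields $D_{\mathrm{flux}}(\phi)\le D_{\mathrm{osc}}(\phi)$.

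The only delicate point is consistency of lifts. $D_{\mathrm{flux}}$ and $D_{\mathrm{osc}}$ must be computed with the same lift $\widetilde{\mathbf c}(\phi)$ that appears in the decomposition. By Lemma~\ref{lem:AJ_decomposition}(2),(4) this lift is canonical, so no ambiguity arises. In any case, $D_{\mathrm{flux}}$ is invariant under changing $\widetilde{\mathbf c}$ by a lattice vector, which is exactly the freedom in $\gamma$.
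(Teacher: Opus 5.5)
Your argument is correct and is essentially the paper's: the corollary is the contrapositive of Proposition~\ref{prop:fixed_point_obstruction}, and that proof runs exactly your chain $D_{\mathrm{flux}}(\phi)\le|\gamma-\widetilde{\mathbf c}(\phi)|=|\widetilde{\mathcal F}(\phi)(\tilde x)|\le D_{\mathrm{osc}}(\phi)$. The paper merely asserts $\widetilde{\mathbf c}(\phi)+\widetilde{\mathcal F}(\phi)(\tilde x)\in\Gamma$; you justify it via deck-transformation equivariance, and you also note that $D_{\mathrm{flux}}$ is unchanged when $\widetilde{\mathbf c}(\phi)$ is shifted by a lattice vector, which makes the argument independent of the choice of lift.
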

	
	The corollary expresses a quantitative principle: a fixed point can exist only
	if the harmonic fluctuation is sufficiently large to compensate for the
	cohomological translation.	Thus, the harmonic norm measures the minimal oscillation required to cancel the
	global flux displacement. Classical Nielsen theory organizes fixed point classes
	according to the homotopy type of a map, whereas the harmonic decomposition
	provides a symplectic refinement of this topological picture. The vector $\mathbf
	c(\phi)$ records the global cohomological displacement generated by the
	symplectic isotopy, while $\mathcal F(\phi)$ measures the pointwise geometric
	correction required to compensate for this translation. The fixed point equation
	is therefore transformed into a balance between a global topological drift and a
	nonlinear harmonic fluctuation.\\
	
	The resulting obstruction gives a metric refinement of the classical fixed point
	problem in the symplectic category. It is important, however, to distinguish
	this global quantitative phenomenon from the local stability of individual fixed
	points. The latter is governed by the usual nondegeneracy condition and is
	inherently a $C^1$-phenomenon. We now reformulate the classical persistence
	theorem for transverse fixed points in harmonic coordinates, thereby expressing
	local fixed point stability within the harmonic coordinate framework.
	
	\begin{corollary}[Persistence of non-degenerate fixed points in harmonic coordinates]
		\label{cor:transverse_stability}
		Let $\phi_0\in G_\omega(\Sigma_g)$ and suppose that $x_0\in\operatorname{Fix}(\phi_0)$
		is a non-degenerate fixed point, that is, $1\notin \operatorname{Spec}(d\phi_0|_{x_0}).$
		Then there exists a $C^1$-neighborhood $\mathcal U$ of $\phi_0$ such that every
		$\phi\in\mathcal U$ possesses a unique fixed point $x(\phi)$ close to $x_0$.
		Moreover, the map $\phi\mapsto x(\phi)$ is $C^1$.
	\end{corollary}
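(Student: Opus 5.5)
The plan is to reduce the statement to the implicit function theorem applied to the graph/displacement map, and then to translate the result into harmonic coordinates via Proposition~\ref{prop:harmonic_fixed_point}. Work in a coordinate chart $V\subset\mathbb R^2$ around $x_0$. Consider the map
\[
\Theta:\mathcal U\times V\longrightarrow \mathbb R^2,\qquad \Theta(\phi,x)=\phi(x)-x,
\]
defined for $\phi$ $C^0$-close to $\phi_0$, so that $\phi(V')\subset V$ for a smaller neighborhood $V'$. Fixed points of $\phi$ near $x_0$ are exactly zeros of $\Theta(\phi,\cdot)$. The partial derivative in $x$ at $(\phi_0,x_0)$ is $d\phi_0|_{x_0}-I$, which is invertible by the non-degeneracy hypothesis $1\notin\operatorname{Spec}(d\phi_0|_{x_0})$.

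To get a $C^1$ dependence on $\phi$ in a Banach setting, I would apply the implicit function theorem on the Banach space $C^1(\overline{V'};\mathbb R^2)$. The evaluation map $(\phi,x)\mapsto\phi(x)$ is $C^1$ from $C^1\times V'$ to $\mathbb R^2$: it is linear in $\phi$ and $C^1$ in $x$ with derivative $d\phi_x$. Its $x$-derivative depends continuously on $\phi$ in the $C^1$-norm, and this is precisely where the $C^1$-topology is needed. The implicit function theorem then gives neighborhoods $\mathcal U\ni\phi_0$ and $W\ni x_0$ together with a unique $C^1$ map $\phi\mapsto x(\phi)\in W$ solving $\phi(x(\phi))=x(\phi)$. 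If one prefers to avoid Banach calculus, a quantitative contraction argument works instead. Choose $W$ so that $\|(d\phi_0-I)^{-1}\|\,\|d\phi|_y-d\phi_0|_{x_0}\|\le\frac12$ for $y\in W$ and $\phi$ $C^1$-close to $\phi_0$. Then Newton-type iteration $x\mapsto x-(d\phi_0|_{x_0}-I)^{-1}(\phi(x)-x)$ is a contraction on $W$, which yields existence and uniqueness. Differentiability in $\phi$ follows from the standard formula $dx(\phi)[\delta\phi]=-(d\phi|_{x(\phi)}-I)^{-1}\delta\phi(x(\phi))$.

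Finally, I would restate the result in harmonic coordinates. By Proposition~\ref{prop:harmonic_fixed_point}, $x(\phi)$ is the unique zero near $x_0$ of $\mathcal F(\phi)+\mathbf c(\phi)$ in $\Jac(\Sigma_g)$. Its persistence can be read as transversality: $d(\mathcal F(\phi)+\mathbf c(\phi))_{x_0}=d\mathcal A_{\phi(x_0)}\circ d\phi_{x_0}-d\mathcal A_{x_0}=d\mathcal A_{x_0}\circ(d\phi_0|_{x_0}-I)$, which is injective because $\mathcal A$ is an immersion (Lemma~\ref{lem:rigidity}). The main obstacle is the functional-analytic step, namely verifying that $(\phi,x)\mapsto\phi(x)$ is $C^1$ on an honest Banach manifold of $C^1$ maps restricted to symplectomorphisms. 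I would sidestep it by working in the ambient Banach space $C^1(\overline{V'};\mathbb R^2)$, where the symplectic constraint plays no role, and restricting afterwards.
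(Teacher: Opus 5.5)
Your proposal is correct and follows the paper's proof. Both apply the implicit function theorem to the chart displacement map $\Psi(\phi,x)=\kappa(\phi(x))-\kappa(x)$, whose $x$-derivative $d\kappa_{x_0}\circ(d\phi_0|_{x_0}-\mathrm{id})$ is invertible by non-degeneracy, and then read the persistent fixed point as a persistent zero of $\mathcal F(\phi)+\mathbf c(\phi)$ via Proposition~\ref{prop:harmonic_fixed_point}; your extra step of running the argument in the ambient Banach space $C^1(\overline{V'};\mathbb R^2)$, where the evaluation map is $C^1$, and then restricting to symplectomorphisms, makes precise the sense of ``$C^1$ dependence on $\phi$'' that the paper leaves implicit.
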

	
	\begin{proof}
		Choose a local coordinate chart $\kappa:V\subset\Sigma_g\longrightarrow\mathbb
		R^2$ around $x_0$. Define $\Psi(\phi,x) = \kappa(\phi(x))-\kappa(x).$ Then
		$\Psi(\phi_0,x_0)=0.$ The derivative in the $x$-direction is
		\[
		d_x\Psi|_{(\phi_0,x_0)} = d\kappa_{x_0} \circ (d\phi_0|_{x_0}-\mathrm{id}).
		\]
		Because $x_0$ is non-degenerate, $d\phi_0|_{x_0}-\mathrm{id}$ is invertible.
		Hence $d_x\Psi|_{(\phi_0,x_0)}$ is an isomorphism. The implicit function
		theorem therefore gives a neighborhood $\mathcal U$ of $\phi_0$ and a unique
		$C^1$ map $x:\mathcal U\rightarrow V$ such that $\Psi(\phi,x(\phi))=0.$ Thus,
		$\phi(x(\phi))=x(\phi),$ and the fixed point persists uniquely.	Finally, by the harmonic fixed point equation, $\widetilde{\mathcal
			F}(\phi)(x(\phi)) +\widetilde{\mathbf c}(\phi) \in\Gamma,$ so the persistent
		fixed point is exactly a persistent zero of the harmonic displacement field.
	\end{proof}
	
	The harmonic primitive acts as a nonlinear coordinate system in which fixed
	points are described by the cancellation of a global cohomological translation
	by a local harmonic deformation.
	
	\begin{example}[Elliptic fixed points and harmonic detection]
		\label{ex:elliptic_fixed_point}
		Consider a Hamiltonian $H: \Sigma_g \to \mathbb{R}$ with a non-degenerate 
		critical point $p_0$ of Morse index $1$ (a saddle point).	Near $p_0$, we can use Darboux coordinates $(x,y)$ such that
		\[
		H(x,y) = \frac{1}{2}(x^2 - y^2) + O(|(x,y)|^3).
		\]
		The Hamiltonian vector field is
		\[
		X_H = x \frac{\partial}{\partial y} - y \frac{\partial}{\partial x} + \text{h.o.t.}
		\]
		The linearization at $p_0$ has eigenvalues $\pm \lambda$ with $\lambda > 0$, 
		making $p_0$ a hyperbolic fixed point. 
		By Subsection~\ref{subsec:fixed_points}, if $\phi = \exp(X_H)$ has a fixed 
		point at $p_0$, then the harmonic fluctuation $\mathcal{F}(\phi)$ must 
		satisfy
		\[
		\mathcal{F}(\phi)(p_0) = -\mathbf c(\phi) \in \mathbb{R}^{2g}.
		\]
		This gives a \emph{global obstruction}: the value of the harmonic primitive 
		at any fixed point is completely determined by the flux. 
		Suppose $\phi$ has two fixed points $p_1, p_2 \in \Sigma_g$. Then
		\[
		\mathcal{F}(\phi)(p_1) = \mathcal{F}(\phi)(p_2) = -\mathbf c(\phi).
		\]
		Since $\mathcal{F}(\phi): \Sigma_g \to \mathbb{R}^{2g}$ is continuous, 
		this implies that the image $\mathcal{F}(\phi)(\Sigma_g)$ must contain 
		the point $-\mathbf c(\phi)$ at least in its interior or boundary. 
		If $\phi \neq \text{id}$ has a fixed point, then
		\[
		\|\phi\|_{\mathrm{harm}} 
		\geq 
		\|\mathbf c(\phi)\|_{\mathbb{R}^{2g}}.
		\]
		Moreover, if $\phi$ is Hamiltonian ($\mathbf c = 0$) and has a fixed point, 
		then $\mathcal{F}(\phi)(p_0) = 0$. By continuity and the intermediate 
		value theorem applied to each component, the harmonic fluctuation must 
		oscillate around zero, giving
		\[
		\|\phi\|_{\mathrm{harm}} 
		= 
		\sup_{p \in \Sigma_g} \|\mathcal{F}(\phi)(p)\|_{\mathbb{R}^{2g}} 
		\geq 
		\text{diam}(\mathcal{F}(\phi)(\Sigma_g))/2.
		\] 
		The harmonic fluctuation map $\mathcal{F}(\phi): \Sigma_g \to \mathbb{R}^{2g}$ 
		acts as a "deformation" of the Abel-Jacobi embedding. The condition 
		$\mathcal{F}(\phi)(p_0) = 0$ at a fixed point means that $p_0$ remains 
		"anchored" at the flux-translated position of the original Abel-Jacobi image. 
		This observation has implications for the Nielsen realization problem in 
		symplectic geometry. If $\phi$ represents a non-trivial element of the 
		mapping class group and is fixed-point free, then either:
		\begin{itemize}
			\item $\|\phi\|_{\mathrm{harm}} > 0$ (always true by 
			Theorem~\ref{thm:intro_nondeg}), or
			\item The harmonic fluctuation provides a continuous "displacement 
			field" with no zeros.
		\end{itemize}
		
		This gives a \emph{quantitative} refinement of Brouwer's fixed point theorem 
		in the symplectic category.
	\end{example}

	\section*{Perspectives}
	
	The harmonic coordinate map developed in this paper suggests possible links
	between symplectic topology, Hodge theory, and geometric quantization. Near the
	identity, the map $\mathcal F$ associates a symplectomorphism to a collection
	of scalar functions determined by harmonic forms. The decomposition, 
	\[
	\mathcal A\circ\phi-\mathcal A = \mathbf c(\phi)+\mathcal F(\phi),
	\]
	naturally separates two different geometric contributions: the flux translation
	in the Jacobian (topological degrees of freedom), and the harmonic primitive
	(pointwise dynamical fluctuation). This resembles the separation of topological
	and dynamical degrees of freedom that frequently appears in geometric
	quantization. One may ask whether this harmonic decomposition can be
	incorporated into a quantization framework, providing a new perspective on the
	functional spaces naturally associated with the symplectomorphism group.
	
	\subsection*{Extension to Topological Area-Preserving Homeomorphisms}
	
	In recent years, $C^0$-symplectic topology has emerged as a major field of
	study, focusing on the group $\mathrm{Homeo}_\omega(\Sigma_g)$ consisting of
	continuous homeomorphisms $f: \Sigma_g \to \Sigma_g$ that preserve the area
	measure $\omega$. This group is the $C^0$-closure of $G_\omega(\Sigma_g)$. 	Because the harmonic decomposition admits a reformulation in terms of
	Abel--Jacobi displacements and continuous orbit integrals, it is natural to ask
	whether it admits a canonical extension to the $C^0$-closure of the
	symplectomorphism group. Such an extension would provide a new functional
	framework for studying topological symplectic groups.
	
	\begin{conjecture}[Topological Harmonic Extension]
		The harmonic displacement map and the associated harmonic norm admit canonical
		continuous extensions to the identity component of $\mathrm{Homeo}_\omega(\Sigma_g)$
		obtained as the $C^0$-closure of $G_\omega(\Sigma_g)$. The extended norm is
		non-degenerate.
	\end{conjecture}
	
	\begin{remark}
		This continuous extension would provide a new candidate invariant for detecting
		topological Hamiltonian homeomorphisms. In particular, it raises the question
		whether the kernel of the extended harmonic flux map coincides with the group of
		Hamiltonian homeomorphisms in the sense of Oh and M\"uller.
	\end{remark}
	
	\begin{remark}[Possible generalizations of the harmonic norm]
		\label{rem:generalizations}
		The non-degeneracy of the harmonic norm appears to be governed by the geometry
		of the harmonic (or Albanese) map rather than by the symplectic structure alone.
		This suggests a broader research programme aimed at determining the class of
		manifolds on which the harmonic norm is non-degenerate.
		
		\begin{enumerate}
			\item \textbf{Closed surfaces of genus $g\ge2$.}
			This is the motivating example of the present work. The Abel--Jacobi map
			$\mathcal A:\Sigma_g\longrightarrow \operatorname{Jac}(\Sigma_g)$ is a smooth
			embedding, and its image admits no non-trivial translation symmetries.
			Consequently, the harmonic norm is non-degenerate (Theorem
			\ref{thm:non_deg_full}).
			
			\item \textbf{Flat tori.}
			For the torus $T^{2n}$, the harmonic $1$-forms are the constant forms
			$dx_1,\ldots,dx_{2n}.$ Every translation $\tau_a(x)=x+a$ satisfies $\mathcal
			A(\tau_a(x))-\mathcal A(x)=a,$ which is constant. Therefore the harmonic
			fluctuation part vanishes identically: $\mathcal F(\tau_a)=0.$ Hence
			translations belong to the kernel of the harmonic norm, showing that
			non-degeneracy fails on flat tori.
			
			\item \textbf{Products with vanishing first Betti number.}
			Consider a product, $M=\Sigma_g\times N,$ where $H^1(N;\mathbb R)=0$. Then
			$H^1(M;\mathbb R)\cong H^1(\Sigma_g;\mathbb R),$ so all harmonic information is
			inherited from the surface factor. It is therefore natural to investigate
			whether the non-degeneracy of the harmonic norm extends to such product
			manifolds.
			
			\item \textbf{Compact K\"ahler manifolds.}
			Every compact K\"ahler manifold admits an Albanese map $\alpha_M:M\longrightarrow
			\operatorname{Alb}(M),$ which generalizes the Abel--Jacobi map of a surface.
			This suggests that the harmonic norm should admit a natural higher-dimensional
			formulation in terms of the Albanese map. The harmonic primitive would then be
			defined using the pullback of harmonic $1$-forms from the Albanese torus.
			
			\item \textbf{Simply connected symplectic manifolds.}
			If $H^1(M;\mathbb R)=0,$ then there are no non-trivial harmonic $1$-forms.
			Consequently, $\|\phi\|_{\mathrm{harm}}=0$ for every symplectomorphism $\phi$,
			so the harmonic norm is identically zero. Typical examples include $S^{2n}$,
			$\mathbb{CP}^n$, and K3 surfaces. In this case, the harmonic norm is maximally
			degenerate and contains no information.
			
			\item \textbf{Nilmanifolds and solvmanifolds.}
			These manifolds generally possess non-trivial harmonic $1$-forms but their
			Albanese maps are rarely embeddings. Understanding the kernel of the harmonic
			norm in this setting may provide further insight into the relationship between
			harmonic geometry and symplectic topology. For instance, on a nilmanifold
			$N/\Gamma$, the harmonic forms are left-invariant, and the Albanese map is
			typically a fibration rather than an embedding. This suggests that the kernel of
			the harmonic norm may be related to the kernel of the Albanese map.
		\end{enumerate}
		
		These examples suggest that the non-degeneracy of the harmonic norm is
		controlled by the geometry of the harmonic (or Albanese) map rather than solely
		by the symplectic structure. This motivates the following conjecture.
		
		\begin{conjecture}[Albanese non-degeneracy]
			Let $(M,\omega)$ be a closed symplectic manifold with $b_1(M)>0$. Assume that
			the Albanese map $\alpha_M:M\rightarrow\operatorname{Alb}(M)$ is an embedding
			and that $\alpha_M(M)$ admits no non-trivial translation symmetries in
			$\operatorname{Alb}(M)$. Then the harmonic norm defined using the pullback of
			harmonic forms from $\operatorname{Alb}(M)$ is non-degenerate on the identity
			component of $\mathrm{Symp}(M,\omega)$.
		\end{conjecture}
	\end{remark}
	
	\section*{Harmonic Persistence and Symplectic Barcodes}
	
	The harmonic decomposition developed in this paper naturally associates to every
	symplectomorphism $\phi\in G_\omega(\Sigma_g)$ the vector-valued harmonic
	displacement
	\[
	\mathcal D_\phi(x) = \mathbf c(\phi)+\mathcal F(\phi)(x),
	\]
	which measures the Abel--Jacobi displacement of the point $x$. The fixed point
	equation $\phi(x)=x$ is therefore equivalent (after fixing a compatible lift of
	the Abel--Jacobi map) to the vanishing of the harmonic displacement:
	\[
	\mathcal D_\phi(x)=0.
	\]
	
	This viewpoint suggests replacing the classical binary question ``Does $\phi$
	possess a fixed point?'' with the quantitative study of the entire harmonic
	displacement landscape.
	
	\subsection*{The harmonic displacement filtration}
	
	Define the non-negative function
	\[
	u_\phi:\Sigma_g\longrightarrow\mathbb R,
	\qquad
	u_\phi(x)=\|\mathcal D_\phi(x)\|.
	\]
	For every $r\ge0$, let
	\[
	X_r(\phi) = \{x\in\Sigma_g:\, u_\phi(x)\le r\}.
	\]
	Since $u_\phi$ is continuous, the family $X_{r_1}(\phi) \subset X_{r_2}(\phi)$
	for $r_1\le r_2$ defines a natural filtration of the surface. When $r=0$, the
	filtration begins at the fixed point set,
	\[
	X_0(\phi) = \operatorname{Fix}(\phi),
	\]
	while for sufficiently large $r$, $X_r(\phi)=\Sigma_g.$	Thus, the filtration records how neighborhoods of the fixed point set grow inside
	the harmonic displacement landscape. Persistent homology associates to every
	filtration a persistence module. Applying persistent homology to $\{X_r(\phi)\}_{r\ge0}$
	therefore produces a family of barcodes.
	
	\begin{definition}[Proposed harmonic barcode]
		The \emph{harmonic barcode} of a symplectomorphism $\phi$ is defined to be the
		persistent homology of the displacement filtration $\{X_r(\phi)\}_{r\ge0}.$
		Equivalently,
		\[
		\mathcal B_{\mathrm{harm}}(\phi) := \operatorname{Barcode} \left(
		\{X_r(\phi)\}_{r\ge0} \right).
		\]
	\end{definition}
	
	At present this should be regarded as a proposed invariant rather than a fully
	developed theory. Nevertheless, it is a natural object associated with the
	harmonic coordinate map and provides a finite-dimensional topological summary of
	the harmonic displacement. Unlike Floer persistence, which is built from the
	action functional on the loop space, the harmonic barcode is constructed
	directly from the surface itself. Consequently, it is expected to be more
	accessible computationally while still encoding information coming from the
	Abel--Jacobi geometry.	One of the fundamental properties of persistent homology is the stability
	theorem, which states that the bottleneck distance between persistence diagrams
	is bounded by the uniform distance between the corresponding filtering
	functions. This immediately suggests the following conjecture.
	
	\begin{conjecture}[Stability of the harmonic barcode]
		Let $\phi,\psi\in G_\omega(\Sigma_g).$ Then
		\[
		d_B \bigl( \mathcal B_{\mathrm{harm}}(\phi), \mathcal B_{\mathrm{harm}}(\psi)
		\bigr) \le \|u_\phi-u_\psi\|_{C^0},
		\]
		where $d_B$ denotes the bottleneck distance between persistence diagrams.
	\end{conjecture}
	
	Since the harmonic coordinate map is continuous in the $C^0$-topology near the
	identity, one expects the harmonic barcode to inherit corresponding stability
	properties.\\
	
	\begin{figure}[htbp]
		\centering
		\begin{tikzpicture}[scale=0.8]
			\draw[thick] plot [smooth cycle] coordinates {(0,0) (2,1) (4,0) (3,-2) (1,-2)};
			\draw (1.2,-0.8) arc (150:30:0.6);
			\draw (1.0,-0.7) arc (-150:-30:0.8);
			\node at (2,-2.5) {Surface $\Sigma_g$};
			
			\fill[color=vertcol, opacity=0.4] (0.8,-0.5) circle (0.3);
			\fill[color=vertcol, opacity=0.4] (3.2,-1.2) circle (0.4);
			\node[color=vertcol, font=\scriptsize] at (0.8, 0) {$X_r(\phi)$};
			
			\draw[->, ultra thick] (4.5, -1) -- (6, -1) node[midway, above, font=\small] {PH};
			
			\draw[->] (6.5, -2) -- (6.5, 0.5) node[above] {$H_1$};
			\draw[->] (6.5, -2) -- (9.5, -2) node[right] {$r$};
			
			\draw[ultra thick, color=vertcol] (6.8, -0.2) -- (8.5, -0.2);
			\draw[ultra thick, color=vertcol] (7.2, -0.8) -- (9.2, -0.8);
			\draw[ultra thick, color=vertcol] (7.5, -1.4) -- (8.0, -1.4);
			
			\node[font=\small] at (8, -2.5) {Barcode $\mathcal{B}_{\text{harm}}$};
		\end{tikzpicture}
		\caption{From Dynamics to Persistence: Sub-level sets of the harmonic displacement $u_\phi(x)$ generate the harmonic barcode.}
	\end{figure}

	Floer theory associates persistence modules to Hamiltonian diffeomorphisms using
	filtrations of the Floer complex by the action functional. The proposed harmonic
	barcode follows a different philosophy. Instead of using the infinite-dimensional
	loop space, it studies the finite-dimensional harmonic displacement function
	\[
	u_\phi(x) = \|\mathbf c(\phi)+\mathcal F(\phi)(x)\|.
	\]
	
	Although considerably simpler, both constructions aim to measure the persistence
	of geometric information under increasing thresholds. Understanding possible
	relationships between harmonic persistence and Floer-theoretic persistence
	appears to be an interesting direction for future research.
	
	The proposed harmonic barcode raises several natural questions:
	
	\begin{enumerate}
		\item Can the harmonic barcode distinguish Hamiltonian from non-Hamiltonian
		symplectomorphisms?
		
		\item Can one compute the barcode explicitly for Dehn twists, pseudo-Anosov
		mapping classes, or linear symplectic maps?
		
		\item Does the harmonic barcode admit a natural extension to $C^0$-symplectic
		topology and area-preserving homeomorphisms?
		
		\item Can analogous constructions be carried out on higher-dimensional
		symplectic manifolds using the Albanese map instead of the Abel--Jacobi map?
		
		\item Is there a conceptual relationship between the harmonic barcode and
		Floer-theoretic barcodes, perhaps through Hodge theory or spectral invariants?
	\end{enumerate}
	
	These questions indicate that harmonic coordinates may provide a natural bridge
	between Hodge theory, symplectic topology, and persistent topology. Whether the
	resulting harmonic barcode yields genuinely new symplectic invariants remains an
	interesting open problem.


\begin{thebibliography}{99}
		
		\bibitem{Tchuaga2021Erratum}
		Tchuiaga, S. (2026). Erratum to ''Hofer-Like Geometry and Flux Theory''.
		Preprint.
		
		\bibitem{Banyaga1997}
		Banyaga, A. (1997).
		\emph{The Structure of Classical Diffeomorphism Groups}.
		Kluwer Academic Publishers.
		
		\bibitem{Tchuaga2018}
		Tchuiaga, S. (2018). On symplectic dynamics.
		\emph{Differ. Geom. Appl.} 61, 170--196.
		
		\bibitem{GriffithsHarris}
		Griffiths, P., \& Harris, J. (1978).
		\emph{Principles of Algebraic Geometry}.
		Wiley.
		
		\bibitem{Earle1975}
		C. J. Earle, ``On the Moduli of Closed Riemann Surfaces with Symplectic
		Automorphisms,'' \emph{Bull. AMS} 81 (1975), pp.~581--586.
		
		\bibitem{MH}
		M. W. Hirsch (1976). \emph{Differential Topology.} Graduate Texts in
		Mathematics 33. Springer, New York.
		
		\bibitem{Petersen2006}
		Petersen, P. (2006).
		\emph{Riemannian Geometry} (2nd ed.).
		Springer-Verlag.
		
		\bibitem{LeRoux2010}
		Le Roux, F. (2010).
		Flux and the $C^0$-topology.
		\emph{Commentarii Mathematici Helvetici}, 85(4), 749--758.
		
		\bibitem{McDuffSalamon}
		McDuff, D., \& Salamon, D. (1998).
		\emph{Introduction to Symplectic Topology} (2nd ed.).
		Oxford University Press.
		
		\bibitem{Polterovich}
		Polterovich, L. (2001).
		\emph{The Geometry of the Group of Symplectic Diffeomorphisms}.
		Birkh\"auser.
		
		\bibitem{HoferZehnder}
		Hofer, H., \& Zehnder, E. (1994).
		\emph{Symplectic Invariants and Hamiltonian Dynamics}.
		Birkh\"auser.
		
	\end{thebibliography}
\end{document}